\theoremstyle{plain}
\newtheorem{theorem}{Theorem}
\newtheorem{lemma}[theorem]{Lemma}
\newtheorem{proposition}[theorem]{Proposition}
\DeclareMathOperator{\Div}{div}
\DeclareMathOperator{\Gal}{Gal}
\DeclareMathOperator{\Pic}{Pic}
\DeclareMathOperator{\Stab}{Stab}
\newcommand{\del}{\partial}
\newcommand{\delbar}{\overline{\partial}}
\newcommand{\IN}{\mathbb{N}}
\newcommand{\IZ}{\mathbb{Z}}
\newcommand{\IQ}{\mathbb{Q}}
\newcommand{\IQbar}{\overline{\mathbb{Q}}}
\newcommand{\IR}{\mathbb{R}}
\newcommand{\IC}{\mathbb{C}}
\newcommand{\IP}{\mathbb{P}}
\newcommand{\vol}{\mathrm{vol}}
\newcommand{\dprime}{{\prime\prime}}
\newcommand{\pr}{\mathrm{pr}}
\newcommand{\caO}{\mathcal{O}}
\newcommand{\id}{\mathrm{id}}
\newcommand{\hhat}{\widehat{h}}
\newcommand{\Lotil}{\overline{\mathcal{L}}}
\newcommand{\Motil}{\overline{\mathcal{M}}}
\newcommand\footnoteref[1]{\protected@xdef\@thefnmark{\ref{#1}}\@footnotemark}
\newcommand{\volh}{\widehat{\mathrm{vol}}}
\begin{document}
	
\author{Lars K\"uhne}
\email{lars.kuehne@ucd.ie}
\address{UCD School of Mathematics and Statistics \\ 
University College Dublin \\ 
Belfield, Dublin 4 \\
Ireland}

\subjclass[2010]{11G50 (primary), and 11G30, 14G05, 14G40, 14H40 (secondary)}

\title[Equidistribution in Families of Abelian Varieties]{Equidistribution in Families of Abelian Varieties and Uniformity}

\begin{abstract}

Using equidistribution techniques from Arakelov theory as well as recent results obtained by Dimitrov, Gao, and Habegger, we deduce uniform results on the Manin-Mumford and the Bogomolov conjecture. For each given integer $g \geq 2$, we prove that the number of torsion points lying on a smooth complex algebraic curve of genus $g$ embedded into its Jacobian is uniformly bounded. Complementing recent works of Dimitrov, Gao, and Habegger, we obtain a rather uniform version of the Mordell conjecture as well. In particular, the number of rational points on a smooth algebraic curve defined over a number field can be bounded solely in terms of its genus and the Mordell-Weil rank of its Jacobian.

\end{abstract}

\maketitle	

Throughout this article, $K$ is a number field and $S$ is a smooth, geometrically irreducible algebraic variety over $K$. The variety $S$ serves as the base of a family $\pi: A \rightarrow S$ of abelian varieties (i.e., $A$ is an abelian scheme over $S$). We furthermore assume that we are given an arbitrary immersion $\iota: A \hookrightarrow \IP^N_K$. Note that such an immersion need not exist and it does not if the base $S$ is not quasi-projective.

For a quasi-projective variety $X$ over $K$ and an archimedean place $\nu$ of $K$, we denote by $X_{\IC_\nu}^{\mathrm{an}}$ the complex (analytic) space $\IC_\nu$-analytic space associated with $X_{\IC_\nu}$ (see \cite{Grauert1994} for this notion). For each archimedean place ${\nu \in \Sigma_\infty(K)}$, a closed point $x \in A$ yields a $0$-cycle $\mathbf{O}_\nu(x) = (x \otimes_K \IC_\nu)^{\mathrm{an}}$ on the $\IC_\nu$-analytic space $A_{\IC_\nu}^{\mathrm{an}}$ associated with $A$. Given an irreducible subvariety $X \subseteq A$, we call a sequence $(x_i) \in X^\IN$ of closed points \textit{$X$-generic} if none of its infinite subsequences is contained in a proper algebraic subvariety of $X$. Note that a sequence is $X$-generic if and only if it converges to the generic point of $X$ in the Zariski topology.

Our first aim is to state an analogue of the equidistribution conjecture for abelian varieties that takes its place within the given relative setting $\pi: A \rightarrow S$. For this purpose, we have to introduce a generalization of the Néron-Tate height. We refer to Section \ref{section:heights} for details and only sketch the basic definitions here. The line bundle $\mathcal{O}(1)$ on $\IP^N_{\mathcal{O}_K}$ can be endowed with $\mathscr{C}^\infty$-hermitian metrics at the infinite places $\Sigma_\infty(K)$ (e.g.\ Fubini-Study metrics), so that we obtain a $\mathscr{C}^\infty$-hermitian line bundle $\overline{\mathcal{O}}(1)$ on $\IP^N_{\mathcal{O}_K}$. 
In this way, we obtain an associated Arakelov height $h_{\overline{\mathcal{O}}(1)}$ for subvarieties of $\IP^N_K$. We remark that our height is the top arithmetic intersection number divided by the degree as this definition is well-adapted to Zhang's inequalities \cite[Theorem 5.2]{Zhang1995} and Yuan's bigness theorem \cite{Yuan2008}. Other articles, most prominently \cite{Faltings1991}, define the height as the top arithmetic intersection number.

Throughout this article, we work with a fixed integer $n \geq 2$. For each irreducible subvariety $X \subseteq A$ and every integer $k \geq 0$, we write $X_k$ for $\iota([n^k](X))$ and $\overline{X}_k$ for its Zariski-closure in $\IP^N_K$. We then define
\begin{equation}
\label{equation::nerontateheight}
\hhat(X) = \limsup_{k \rightarrow \infty} \left(\frac{ h_{\overline{\mathcal{O}}(1)}(\overline{X}_k)}{n^{2k}} \right) \in [0,\infty].
\end{equation}
As $[n]$ preserves the fibers of $\pi$, the limit superior in \eqref{equation::nerontateheight} specializes to an ordinary Néron-Tate height if $X$ is completely contained in such a fiber (\cite[Theorem 11.18]{Gubler2003}). In this case, the limit superior can be replaced by a limit. In particular, the limit process is well-behaved for closed points in $A$. 

For each archimedean place $\nu \in \Sigma_\infty(K)$ and every non-empty simply connected open $U \subseteq S^{\mathrm{an}}_{\IC_\nu}$, there exists by \cite[Proposition B.2]{Dimitrov2021a} a real-analytic isomorphism $$a: A_{\IC_\nu}^{\mathrm{an}}|_U \longrightarrow (\IR/\IZ)^{2\dim(A/S)} \times U$$ that restricts to a group homomorphism in the fiber over each point of $U$. The induced map $$b = \pr_1 \circ a: A_{\IC_\nu}^{\mathrm{an}}|_U \longrightarrow (\IR/\IZ)^{2\dim(A/S)}$$ is unique up to post-composition with an automorphism of $(\IR/\IZ)^{2\dim(A/S)}$. For a geometrically irreducible subvariety $X \subseteq A$ with $\pi(X) = S$ and every point $x \in (X^{\mathrm{sm}})_{\IC_\nu}^{\mathrm{an}} \cap \pi^{-1}(U)$, we define $\mathrm{rank}_{\mathrm{Betti}}(X,x)$, the \textit{Betti rank} of $X$ at $x$, as the $\IR$-dimension of
\begin{equation*}
db(T_{\IR,x}X_{\IC_\nu}^{\mathrm{an}}) \subseteq T_{b(x)}(\IR/\IZ)^{2\dim(A/S)} = \IR^{2\dim(A/S)}.
\end{equation*}
The subvariety $X \subseteq A$ is called \textit{non-degenerate} if and only if there exists a point $x_0 \in (X^{\mathrm{sm}})_{\IC_\nu}^{\mathrm{an}} \cap \pi^{-1}(U)$ such that $\mathrm{rank}_{\mathrm{Betti}}(X,x_0) = 2\dim(X)$. It is easy to see that these definitions depend neither on the choice of $U$ nor $a$. A priori, whether a subvariety is non-degenerate or not may depend on the choice of archimedean place $\nu$, but this is not the case by Gao's algebraic characterization of degenerate subvarieties \cite[Theorem 1.1]{Gao2018a}. Further below, we exclusively work with a fixed archimedean place $\nu$ and, for the purposes of this article, degeneratedness could be also simply understood as with respect to this place.

With these preparations, we can state the first conjecture studied in this article.

\textbf{Relative equidistribution conjecture (REC).} \textit{Let $X \subseteq A$ be a non-degenerate geometrically irreducible subvariety. For each place $\nu \in \Sigma_\infty(K)$, there exists a measure $\mu_{\nu}$ on $X_{\IC_\nu}^{\mathrm{an}}$ with the following property: If $(x_i)$ is an $X$-generic subsequence of closed points in $X$ such that $\hhat(x_i) \rightarrow \hhat(X)$, then
\begin{equation}
\label{equation:equidistribution}
\frac{1}{\# \mathbf{O}_\nu(x_i)}\sum_{x \in \mathbf{O}_\nu(x_i)} f(x)\longrightarrow \int_{X_{\IC_\nu}^{\mathrm{an}}} f \mu_{\nu}, 
\ i \rightarrow \infty,
\end{equation}
for every compactly supported continuous function $f: X_{\IC_\nu}^{\mathrm{an}} \rightarrow \IR$.}

If the base variety $S$ in (REC) is a single point, the conjecture specializes to the classical equidistribution conjecture, which is a result of Szpiro, Ullmo, and Zhang \cite{Szpiro1997} if the place $\nu$ is archimedean. In addition, Yuan's bigness theorem \cite{Yuan2008} implies (REC) in the case of non-archimedean places $\nu \in \Sigma_f(K)$. The non-degeneracy condition is always satisfied in the classical case, but some condition is definitely needed in the general case. Indeed, if $X$ is the total space of a trivial family $A = A_0 \times \IP^1_\IQ$ whose fiber is an abelian variety $A_0$ over $\IQ$, then one can easily pick an infinite sequence of torsion points in the fiber over each $q \in \IP^1(\IQ)$. For each of these sequences, the Galois orbits of their elements are all contained in the same fiber. Combining them, one can obtain generic sequences of closed points in $A$ whose averaged Galois orbits cannot converge weakly to any fixed measure. Indeed, one has complete control over the associated pushforward measures on $\IP^1(\IQ)$.

The author cannot rule out that (REC) is empty if $\hhat(X)>0$, as there may be no non-degenerate subvariety $X \subset A$ enjoying an $X$-generic sequence $(x_i)$ with $\hhat(x_i) \rightarrow \hhat(X)>0$. 
However, there are plenty of non-degenerate subvarieties $X \subseteq A$ containing a Zariski-dense set of torsion points, namely all non-degenerate subvarieties $X$ with $\dim(X)=\dim(A)-\dim(S)$. A well-studied example are the sections of families of elliptic curves \cite{Corvaja2022}. In stark contrast to the case $\dim(S)=0$, these do not all necessarily come from subgroup schemes of $A$, all of which can be formally seen to have zero height. By the following theorem, the said non-degenerate subvarieties also satisfy $\hhat(X)=0$ and hence provide non-trivial examples of (REC). To simplify our exposition, we avoid the more technical case of non-archimedean places $\nu \in \Sigma_f(K)$. In the following, the symbol $\nu$ hence always denotes an archimedean place. 
\begin{theorem}
\label{theorem:equidistribution}
Let $X \subseteq A$ be a geometrically irreducible subvariety with $\pi(X)=S$ that is non-degenerate and contains an $X$-generic subsequence $(x_i)$ of points with $\hhat(x_i) \rightarrow 0$. Then, we have
\begin{equation*}
\hhat(X) = \lim_{k \rightarrow \infty} \left(\frac{ h_{\overline{\mathcal{O}}(1)}(\overline{X}_k)}{n^{2k}} \right) = 0
\end{equation*}
and, for each archimedean place $\nu \in \Sigma_\infty(K)$, there exists a measure $\mu_\nu$ on $X_{\IC_\nu}^{\mathrm{an}}$  such that \eqref{equation:equidistribution} holds for every continuous function $f: X_{\IC_\nu}^{\mathrm{an}} \rightarrow \IR$ with compact support.
\end{theorem}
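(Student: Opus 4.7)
The plan is to adapt the variational argument of Szpiro--Ullmo--Zhang \cite{Szpiro1997} to the present relative setting, with the non-degeneracy hypothesis on $X$ and Gao's Zariski-closedness of $X^{\mathrm{deg}}_{\IC_\nu}$ \cite{Gao2018a} supplying exactly the positivity needed over the support of the test functions. As in the classical case, the argument splits into a height-vanishing step followed by a perturbative Hilbert--Samuel comparison.

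\textbf{Step 1: vanishing of the height.} I would first prove $\hhat_{\iota}(X) = 0$ and upgrade the $\limsup$ in \eqref{equation::nerontateheight} to a $\lim$. For each fixed $k \geq 0$, the points $\iota([n^k]x_i)$ form an $\overline{X}_k$-generic sequence whose $\overline{\mathcal{O}}(1)$-heights are $n^{2k}\hhat_\iota(x_i) + O(1)$, so the essential minimum satisfies $e_1(\overline{X}_k,\overline{\mathcal{O}}(1)) = o(n^{2k})$ by jointly choosing $k$ and $i$. Zhang's inequality on successive minima combined with the scaling $\deg_{\mathcal{O}(1)}(\overline{X}_k) = \Theta(n^{2k\dim X})$ then yields
\begin{equation*}
\limsup_{k \to \infty} \frac{h_{\overline{\mathcal{O}}(1)}(\overline{X}_k)}{n^{2k}} \leq 0.
\end{equation*}
Since $\overline{\mathcal{O}}(1)$ carries a Fubini--Study metric, each $h_{\overline{\mathcal{O}}(1)}(\overline{X}_k)$ is non-negative, forcing equality and hence $\hhat_\iota(X) = 0$ as a genuine limit.

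\textbf{Step 2: variational argument.} Fix $\nu \in \Sigma_\infty(K)$ and a smooth compactly supported $f$ on $(X^{\mathrm{sm}}_{\IC_\nu} \setminus X^{\mathrm{deg}}_{\IC_\nu})^{\mathrm{an}}$, extended to a smooth compactly supported function on $A_{\IC_\nu}^{\mathrm{an}}$. Perturb the metric at $\nu$ to $\overline{\mathcal{O}}(1)_\epsilon := \overline{\mathcal{O}}(1) \otimes e^{-\epsilon f}$. For small $\epsilon>0$, the perturbed current $c_1(\overline{\mathcal{O}}(1)_\epsilon)^{\dim X}|_{X^{\mathrm{an}}_{\IC_\nu}}$ remains a positive measure on the support of $f$, precisely because the Betti form has full rank there. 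Rerunning Step 1 for $\overline{\mathcal{O}}(1)_\epsilon$ yields $\hhat_{\iota,\epsilon}(X) = 0$, while arithmetic Hilbert--Samuel produces the first-order expansion
\begin{equation*}
0 = \hhat_{\iota,\epsilon}(X) - \hhat_\iota(X) = (\dim X + 1)\epsilon \int_{X_{\IC_\nu}^{\mathrm{an}}} f \, \mu_\nu + O(\epsilon^2),
\end{equation*}
where $\mu_\nu$ is the weak limit of the normalized currents $n^{-2k\dim X} c_1(\overline{\mathcal{O}}(1))^{\dim X}|_{\overline{X}_k^{\mathrm{an}}}$. Comparing this with the pointwise expansion $\hhat_{\iota,\epsilon}(x_i) = \hhat_\iota(x_i) + \epsilon \frac{1}{\#\mathbf{O}(x_i)}\sum_{x \in \mathbf{O}_\nu(x_i)} f(x) + O(\epsilon^2)$ and letting first $i \to \infty$ and then $\epsilon \to 0^+$ and $\epsilon \to 0^-$ in the usual SUZ manner yields the equidistribution statement for smooth $f$. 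A Stone--Weierstrass density argument then extends it to arbitrary continuous $f$ with compact support in $(X^{\mathrm{sm}}_{\IC_\nu} \setminus X^{\mathrm{deg}}_{\IC_\nu})^{\mathrm{an}}$.

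\textbf{Main obstacle.} The crux of the argument is making arithmetic Hilbert--Samuel available in a setup where $\overline{\mathcal{O}}(1)|_A$ is only fiberwise ample, and where the top self-intersection current of the hermitian metric degenerates exactly along $X^{\mathrm{deg}}_{\IC_\nu}$. If $f$ were supported on the degenerate locus, the perturbed metric would fail to supply the positivity needed for the Hilbert--Samuel expansion, and the SUZ scheme would break. Non-degeneracy of $X$ together with Gao's theorem \cite{Gao2018a} ensure that $(X^{\mathrm{sm}}_{\IC_\nu} \setminus X^{\mathrm{deg}}_{\IC_\nu})^{\mathrm{an}}$ is a non-empty open subset of $X^{\mathrm{an}}_{\IC_\nu}$, and restricting the test functions to have support there is exactly what keeps the machinery intact. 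I expect that executing this rigorously will require a relative extension of Chambert-Loir \cite{Chambert-Loir2006} and Moriwaki-type intersection theory for continuous semi-positive metrics, tailored to exploit the full-rank Betti form off $X^{\mathrm{deg}}_{\IC_\nu}$ rather than global ampleness of $\overline{\mathcal{O}}(1)|_A$.
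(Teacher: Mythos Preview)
Your Step 1 contains a genuine gap that is the crux of the whole relative setting. You write that the $\overline{\mathcal{O}}(1)$-heights of $\iota([n^k]x_i)$ are $n^{2k}\hhat_\iota(x_i) + O(1)$, but the comparison $|h_\iota(y) - \hhat_\iota(y)| \leq c\,\max\{1,h_\kappa(\pi(y))\}$ (Zarhin--Manin, or \cite[Theorem A.1]{Dimitrov2020}) only gives an error of size $O(\max\{1,h_\kappa(\pi(x_i))\})$. For this to be $O(1)$ \emph{uniformly in $i$} you must know that the moduli heights $h_\kappa(\pi(x_i))$ stay bounded along a generic small sequence. This is not formal: it is precisely the height inequality of Dimitrov--Gao--Habegger \cite[Theorem B.1]{Dimitrov2020}, and it is here, not only in the equidistribution step, that the non-degeneracy of $X$ enters decisively. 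Without this input, neither $e_1(\overline{X}_k,\overline{\mathcal{O}}(1)) = o(n^{2k})$ nor the vanishing $\hhat_\iota(X)=0$ follows. The paper's Lemmas \ref{lemma::gaohabegger2}--\ref{lemma::smallsequence} package exactly this.

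Your Step 2 also drifts from what actually works. The claim ``rerunning Step 1 for $\overline{\mathcal{O}}(1)_\epsilon$ yields $\hhat_{\iota,\epsilon}(X)=0$'' is false: once you twist the metric by $e^{-\epsilon f}$ the lower bound $h_{\overline{\mathcal{O}}(1)_\epsilon}(\overline{X}_k)\geq 0$ is lost, and the upper Zhang inequality no longer gives $o(n^{2k})$ because the perturbed height of a point is not $n^{2k}\hhat_\iota(x_i)+O(1)$ either. The paper therefore does the variational argument at each finite level $k$ rather than on any limit object: it works on the graph closure $\overline{Y}_k\subset\IP^N\times\IP^N$, perturbs by $\overline{\mathcal{O}}(\lambda n^{2k} f_k)$ with the scaling $n^{2k}$ built in, applies Zhang's inequality \cite[Theorem 5.2]{Zhang1995} directly (strict positivity on $\mathrm{supp}(f)$ via Lemma \ref{lemma::strict_positivity} is exactly what the support restriction buys, so no arithmetic Hilbert--Samuel or Yuan bigness is needed), and obtains an equidistribution statement with an error $O(\mathcal{l}_k\lambda^{-1}+\lambda)$ that vanishes as $k\to\infty$ by Step 1. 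Your instinct about the role of the support hypothesis is correct, but the scheme you outline---perturbing a canonical limit metric and invoking Moriwaki-type theory---runs into the singularity of Néron--Tate local heights on compactifications, which the paper's finite-level approach is specifically designed to avoid.
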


This theorem generalizes a result of DeMarco and Mavraki \cite[Corollary 1.2]{DeMarco2020}, though we impose an additional compact support assumption on the test functions here and restrict to archimedean places. In fact, their result amounts to the special case where $A$ is a fiber product of elliptic families over $S$ and $X$ is the image of a section of $\pi: A \rightarrow S$. (Related results can be found in \cite{Corvaja2022}.) Their proof relies on results of Silverman \cite{Silverman1992,Silverman1994a,Silverman1994} controlling the local Néron-Tate height on $X$ near the boundary $\overline{X} \setminus X$ where $\overline{X}$ is the Zariski closure of $X$ in a certain compactification of $A$. Silverman's explicit results allow to verify directly that the restriction of the limit Néron-Tate height to $X$ can be identified with a semipositive Arakelov height on $\overline{X}$ (see \cite[Theorem 1.1]{DeMarco2020}). Let us mention that our height $\hhat(X)$ coincides with their height $h_P(B)$.

If $\dim(X)\geq 2$, this approach seems less feasible as the Néron-Tate local heights generally exhibit singularities on compactifications (e.g., on toroidal compactifications). Unfortunately, the appearance of these singularities does not seem to be well-recorded in the literature. The degeneration of the archimedean Néron-Tate local heights and their singularities in the case $X=A$ is however well-understood \cite{BurgosGil2018, BurgosGil2016b}. 

Independent from the present work of the author, Yuan and Zhang have recently proven a more comprehensive equidistribution result \cite[Theorems 5.4.3 and 6.2.3]{Yuan2021}, which supersedes the above theorem and confirms the conjecture (REC) more generally for bounded test functions $f: X^{\mathrm{an}}_{\IC_\nu} \rightarrow \IR$. In addition, Gauthier has also proven a more general equidistribution result \cite[Theorem 2]{Gauthier2021}, and both these results cover also non-archimedean places. They also establish the existence of the measure $\mu_\nu$ independently from the existence of a sequence $(x_i)$ as in Theorem \ref{theorem:equidistribution}, which our shorter proof does not. However, it is a feature of our proof as well that the measure $\mu_\nu$ is the same for any other sequence $(x_i^\prime)$ satisfying the conditions of the theorem.


Let us briefly describe the strategy employed in the proof of Theorem \ref{theorem:equidistribution}. The main idea is to apply equidistribution techniques not directly to any canonical limit height, but instead to the standard projective heights $h_{\overline{\mathcal{O}}(1)}$ on the subvariety $\overline{X}_k \subseteq \IP^N_{K}$ for some sufficiently large $k$. For technical reasons, we actually work with the Zariski closure $\overline{Y}_k \subset \IP^N_{K} \times \IP^N_{K}$ of the graph of the map $[n^k]|_X: X \rightarrow [n^k](X)$. This circumvents the problem that $[n^k]|_{X}$ may not be an isomorphism. For the description of our method here, we simply assume that we are in this case and continue with $\overline{X}_k$ instead of $\overline{Y}_k$. The $X$-generic sequence $(x_i)$ determines an $\overline{X}_k$-generic sequence $(x_i^{(k)})$ by setting $x_i^{(k)} = \iota([n^k](x_i))$. 

For equidistribution, we have to control the following two quantities:
\begin{itemize}
	\item[(i)] the projective height $h_{\overline{\mathcal{O}}(1)}(\overline{X}_k)$, and
	\item[(ii)] the limit superior $\mathcal{l}_k := \limsup_{i \rightarrow \infty}(h_{\overline{\mathcal{O}}(1)}(x_i^{(k)})/n^{2k})$.
\end{itemize}
By means of Zhang's inequalities \cite[Theorem 5.2]{Zhang1995}, it is easy to get some control on (i) once one controls (ii): The projective height of any point on $\overline{X}_k$ is non-negative, whence $h_{\overline{\mathcal{O}}(1)}(\overline{X}_k) \geq 0$. In the other direction, we have $$h_{\overline{\mathcal{O}}(1)}(\overline{X}_k) \leq \mathcal{l}_k n^{2k}$$ as $(x_i^{(k)})$ is an $\overline{X}_k$-generic sequence. 

Thus it suffices to control (ii). Here again, it is clear that $\mathcal{l}_k \geq 0$ so that we are only in need of an upper bound. For a fixed integer $k \geq 1$, we derive such an estimate in Lemma \ref{lemma::smallsequence} from a bound on
\begin{equation}
\label{equation::uniform}
\left|\frac{h_{\overline{\mathcal{O}}(1)}(x_i^{(k)})}{n^{2k}}-\hhat(x_i)\right|
\end{equation}
that is uniform in $i \geq 1$. To explain the origin of such a bound, let us choose an arbitrary immersion $\kappa: S \hookrightarrow \IP^{M}_K$ of the base variety and an associated projective height $h$ on $S$. Using a result of Manin and Zarhin \cite{Zarhin1972}, which we invoke in the more recent version of \cite[Corollary 7.4]{Silverman1987}, the quantity \eqref{equation::uniform} is bounded from above by
\begin{equation*}
c_1 \cdot n^{-2k} \max \{1, h(\pi(x_i))\}
\end{equation*}
where the constant $c_1 > 0$ depends neither on $i$ nor on $k$ (Lemma \ref{lemma::gaohabegger2}). This bound tends to $0$ uniformly in $i$ as soon as $h(\pi(x_i))$ can be uniformly bounded for all $i \geq 1$. This is precisely what a height bound due to Dimitrov, Gao, and Habegger \cite[Theorem 1.6]{Dimitrov2021a} provides (Theorem \ref{lemma::gaohabegger}). These estimates give us an equidistribution result for $(x_i^{(k)})$ on $\overline{X}_k$ -- up to a certain error term that reflects our incomplete control on both (i) and (ii). This translates directly to a near-equidistribution result for $(x_i)$ on $X$. With sufficient bookkeeping, all error terms can be seen to disappear as $k \rightarrow \infty$, and we obtain \eqref{equation:equidistribution} asymptotically. 

It should be mentioned that a previous article \cite{Kuehne2022} of the author contains an asymptotic approach in order to prove the equidistribution conjecture for semiabelian varieties, which does not follow directly from Yuan's equidistribution theorem \cite{Yuan2008}. Apart from the general idea to extend the reach of Yuan's theorem through asymptotics and additional arguments tailored to the specific situation at hand, the approach here is rather orthogonal to the one in \cite{Kuehne2022}. In fact, the canonical heights on semiabelian varieties extend well to the boundary of good compactifications. Yuan's theorem can hence be applied, although it does not yield anything if the chosen compactification of the given semiabelian variety has strictly negative height, which generally cannot be avoided. In contrast, the conditions of Yuan's theorem are violated here because the canonical heights have singularities in general. This accounts for the fact that all arguments in \cite{Kuehne2022} invoke canonical heights, whereas only approximating projective heights are used in this article. As a consequence, the common intersection of both arguments is rather narrow, although the initial idea is identical.




A particular feature of the classical equidistribution conjecture is its close relation with the Bogomolov conjecture. In fact, Ullmo \cite{Ullmo1998} and Zhang \cite{Zhang1998} deduced the Bogomolov conjecture from the equidistribution conjecture. Our application of Theorem \ref{theorem:equidistribution} follows their footsteps. In contrast to the classical setting of \cite{Ullmo1998,Zhang1998}, it is necessary in our relative setting to establish that the subvarieties used are non-degenerate so that it is legitimate to use our equidistribution result. For this, a consequence \cite[Theorem 1.3]{Gao2018a} (see also the corrigendum \cite{Gao2021b}) of Gao's work on the mixed Ax-Schanuel conjecture \cite{Gao2018} is a crucial ingredient that yields immediately what is needed.

The main consequence of our approach being able to work with families of abelian varieties instead of single ones is that it allows us to improve classical results of Manin-Mumford and Bogomolov type to uniform results. As a first striking consequence we obtain the following strong version of the Manin-Mumford conjecture. Note that the uniformity in the genus has not been achieved by any of the other methods proposed up to the present day \cite{Hrushovski2001, Pila2008, Raynaud1983, Ullmo1998}. However, uniformity has been achieved under additional restrictions on the Jacobian in a few particular cases. In the special case that $\mathrm{Jac}(C)$ polarized by the theta divisor $\Theta_C$ is the self-product of a polarized elliptic curve, the assertion of the theorem below was proven by David and Philippon \cite[Théorème 1.13]{David2007}. There is also a noteworthy result of DeMarco, Krieger, and Ye \cite[Theorem 1.1]{DeMarco2020a}. They showed a similar assertion for complex algebraic curves whose Jacobian is an abelian surface admitting real multiplication by the real quadratic order of discriminant $4$ (compare Section 9 of \cite{DeMarco2020a}). Apart from these results, the author is only aware of a result due to Dimitrov, Gao, and Habegger \cite[Proposition 8.1]{Dimitrov2021a}, which requires $C$ to be defined over a number field and $\mathrm{Jac}(C)$ to have sufficiently large ``modular height''. We discuss their work in detail further below.

\begin{theorem}[uniform Manin-Mumford conjecture]
	\label{theorem::uniform_mm}
	For each $g \geq 2$, there exists an integer $c_2(g) \geq 1$ such that the following assertion is true: For every smooth proper genus $g$ curve $C$ defined over $\IC$ and every divisor $D$ of degree $1$ on $C$, we have
	\begin{equation*}
	\# (\iota_D(C) \cap \mathrm{Tors}(\mathrm{Jac}(C))) \leq c_2(g)
	\end{equation*}	
	where
	\begin{equation*}
	\iota_D: C \longrightarrow \mathrm{Jac}(C), \ q \longmapsto [q] - D,
	\end{equation*}
	is the Abel-Jacobi embedding sending $D$ to the identity in $\mathrm{Jac}(C)$.
\end{theorem}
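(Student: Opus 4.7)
The plan is to follow the Ullmo-Zhang strategy for Bogomolov-type statements, transported into the relative setting afforded by Theorem~\ref{theorem:equidistribution}, and to combine it with the height inequality of Dimitrov, Gao, and Habegger. By a Noetherian induction, it suffices to prove the following: for each irreducible subvariety $Z$ of the moduli stack $\mathcal{M}_{g,1}$ of genus-$g$ curves with a marked point, the set of $(C,p) \in Z(\IC)$ with more than $c_2(g)$ torsion points on $\iota_p(C)$ is not Zariski dense in $Z$, once $c_2(g)$ is chosen large enough. The base case $\dim Z = 0$ is Raynaud's Manin-Mumford theorem, so we may and do assume $\dim Z \geq 1$.

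Passing to a smooth finite cover $S$ of such a $Z$, I would set up an auxiliary universal family $X \hookrightarrow A$ over $S$, where $A$ is an appropriate fibered power of the universal Jacobian $\mathfrak{J} \to S$ and $X$ is the corresponding fibered power of the universal Abel-Jacobi image $\mathfrak{C} \hookrightarrow \mathfrak{J}$. The number of factors is adjusted so that $\dim X$ equals the relative dimension of $A \to S$, making $X$ a candidate for non-degeneracy in the sense of Theorem~\ref{theorem:equidistribution}. The first geometric input is that $X$ is indeed non-degenerate inside $A$: this follows from \cite[Theorem 1.3]{Gao2018a} of Gao, given the classical facts that a generic Jacobian is geometrically simple and that no $\iota_p(C) \subset \mathrm{Jac}(C)$ lies in a translate of a proper abelian subvariety for $g \geq 2$.

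Now suppose uniformity fails on $Z$. For each $i \geq 1$, I then have a counterexample $(C_i, p_i)$ with more than $c_2(g)$ torsion points on $\iota_{p_i}(C_i)$; taking suitable fibered products of these torsion points yields an $X$-generic sequence of torsion points $(x_i) \in X$, for which $\hhat_\iota(x_i) = 0$ holds automatically. The second crucial input is the height inequality of Dimitrov-Gao-Habegger \cite[Theorem 1.6]{Dimitrov2020}, appearing in the paper as Lemma~\ref{lemma::gaohabegger}; combined with \cite[Proposition 8.1]{Dimitrov2020}, which handles counterexamples of large modular height directly, it ensures that the moduli points $\pi(x_i)$ may be taken to have uniformly bounded height on $S$. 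Theorem~\ref{theorem:equidistribution} then applies to the sequence $(x_i)$, producing a limit measure $\mu_\nu$ on $X_{\IC_\nu}^{\mathrm{an}}$ at each archimedean place $\nu \in \Sigma_\infty(K)$.

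The final step is the Ullmo-Zhang comparison trick: I would apply Theorem~\ref{theorem:equidistribution} simultaneously to $X$ and to a translate $X + \tau$ by a suitably chosen nonzero torsion section $\tau$ of $A \to S$, and compare the two resulting limit measures on a common non-empty open subset contained in $X^{\mathrm{sm}} \setminus X^{\mathrm{deg}}_{\IC_\nu}$ and in the analogous locus for $X+\tau$. As in \cite{Ullmo1998, Zhang1998}, the equality of these measures forces a fiberwise translation symmetry of $X$ by $\tau$; this contradicts the fact that each fiber of $X$ is essentially a product of copies of $\iota_{p_i}(C_i)$, whose fiberwise stabilizer in $\mathrm{Jac}(C_i)^m$ is trivial for $g \geq 2$. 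The main obstacle will be reconciling the restriction in Theorem~\ref{theorem:equidistribution} on the compact support of the test functions with the Ullmo-Zhang measure comparison; I expect this to be feasible because non-degeneracy ensures that $X \setminus X^{\mathrm{deg}}_{\IC_\nu}$ is Zariski open and dense, so that the comparison can be localized to an arbitrarily small open subset avoiding both degenerate loci.
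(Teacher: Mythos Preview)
Your overall architecture---Noetherian induction on moduli, fibered powers, non-degeneracy via Gao, then relative equidistribution---matches the paper's route through Proposition~\ref{proposition:bogo}. The final step, however, is a genuine gap. The Ullmo--Zhang mechanism is \emph{not} a comparison of $X$ with a translate $X+\tau$ by a torsion section: the form $\beta$ of Lemma~\ref{lemma:equilibrium} is fiberwise translation-invariant, so $(+\tau)_*\mu^X_\nu = \mu^{X+\tau}_\nu$ holds automatically and yields no constraint; and on $X\cap(X+\tau)$, which has strictly smaller dimension when $\tau$ is outside the stabilizer, neither equidistribution statement says anything. No contradiction is produced this way, and ``equality of the measures forces translation symmetry'' is simply false here.

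What the paper (following Ullmo and Zhang) actually does is pass to a further fibered power $C^{[mm']}$ and use the difference morphism
\[
\Delta:(\underline{x}_1,\dots,\underline{x}_m)\longmapsto(\underline{x}_1-\underline{x}_2,\dots,\underline{x}_{m-1}-\underline{x}_m),
\]
which is generically injective on $C^{[mm']}$ once the fiberwise stabilizer is trivial and $m$ is large (Zhang's Lemma~3.1). Equidistribution is applied to both $C^{[mm']}$ and $\Delta(C^{[mm']})$, and the contradiction is the pointwise inequality $\mu_1\neq\Delta^*\mu_2$ at a diagonal point $p=(\underline{x},\dots,\underline{x})$: there $d\Delta$ kills the diagonal tangent directions, forcing $(\Delta^*\mu_2)_p=0$, while a direct Betti-rank check (Lemma~\ref{lemma::finallemma}) shows $(\mu_1)_p\neq 0$ provided $\underline{x}$ is chosen with $C^{[m']}$ non-degenerate at $\underline{x}$. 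The support restriction in Theorem~\ref{theorem:equidistribution} is then harmless because one only needs a single test function supported near such a $p$. Two minor points: the descent from $\IC$ to $\IQbar$ requires Masser's specialization argument, not just Noetherian induction on $\mathcal{M}_{g,1}$; and your appeal to \cite[Proposition~8.1]{Dimitrov2020} for large modular height is unnecessary, since for torsion points $\hhat_\iota(x_i)=0$ and Lemma~\ref{lemma::gaohabegger} already bounds the modular height directly.
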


Whereas the number of torsion points on $\iota_D(C) \subseteq \mathrm{Jac}(C)$ is uniformly bounded by the theorem, there can be no uniform bound on their order, even under the additional restriction $D=[p]$ with a closed point $p \in C$ (see \cite[Section 9.5]{DeMarco2020a} for a counterexample to such a strengthening).

We actually show a stronger Bogomolov-type result in this article. To state it, we need to introduce some additional terminology. \textit{Details are given at the end of this introduction. We restrict ourselves to moduli over $\IQbar$.} For an integer $n \geq 3$, let $\mathcal{A}_{g,n}$ denote the fine moduli space of principally polarized abelian varieties with level $n$ structure and write $\pi_{g,n}: \mathcal{B}_{g,n} \rightarrow \mathcal{A}_{g,n}$ for the associated universal family of abelian varieties. By $\mathcal{M}_{g,n}$ we denote the fine moduli space of smooth proper genus $g$ curves with a Jacobi structure of level $n$. We let $\tau_{g,n}: \mathcal{M}_{g,n} \rightarrow \mathcal{A}_{g,n}$ be the Torelli map with level $n$ structure. Both $\mathcal{A}_{g,n}$ and $\mathcal{B}_{g,n}$ as well as $\mathcal{M}_{g,n}$ are actually quasi-projective varieties over $\IQbar$, and $\tau_{g,n}: \mathcal{M}_{g,n} \rightarrow \mathcal{A}_{g,n}$ is a morphism between $\IQbar$-varieties.

For every $s \in \mathcal{M}_{g,n}(\IQbar)$, all points $q \in  \mathcal{C}_{g,n,s}(\IQbar)$ and any divisor $D$ of degree $1$ on $\mathcal{C}_{g,n,s}$, the difference $[q]-D$ can be considered as a $\IQbar$-point of the Jacobian $\mathrm{Jac}(\mathcal{C}_{g,n,s})$, which is canonically a subvariety of $\mathcal{B}_{g,n}$. Having chosen an immersion $\iota: \mathcal{B}_{g,n} \hookrightarrow \IP^N_{\IQbar}$, we can hence assign a height $\hhat([q]-D)$ to this difference.

\begin{theorem}[uniform Bogomolov conjecture]
	\label{theorem::bogo}
	Let $g \geq 2$ and $n \geq 3$ be integers, and let $\iota: \mathcal{B}_{g,n} \hookrightarrow \IP^N_{\IQbar}$ be an immersion. There exist constants $c_3 = c_3(g,n,\iota)$, $c_4 = c_4(g,n,\iota)>0$ for which the following assertion is true: For each $s \in \mathcal{M}_{g,n}(\IQbar)$ and every divisor $D$ of degree $1$ on the curve $\mathcal{C}_{g,n,s}$, we have
	\begin{equation}
	\label{equation::my_equation}
	\# \left\{ q \in \mathcal{C}_{g,n,s}(\IQbar) \ \middle| \ \hhat([q]-D) \leq c_3 \right\} < c_4.
	\end{equation}
\end{theorem}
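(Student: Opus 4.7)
The plan is to adapt Ullmo's classical deduction of the Bogomolov conjecture from equidistribution (\cite{Ullmo1998, Zhang1998}) to the relative setting, with Theorem \ref{theorem:equidistribution} playing the role of the Szpiro-Ullmo-Zhang archimedean equidistribution theorem. I would argue by contradiction: if the theorem fails, there exist a sequence $s_i \in \mathcal{M}_{g,n}(\IQbar)$, points $p_i \in \mathcal{C}_{g,n,s_i}(\IQbar)$, and subsets $T_i \subseteq \mathcal{C}_{g,n,s_i}(\IQbar)$ of cardinalities tending to infinity whose elements $q$ satisfy $\hhat_\iota(q - p_i) \to 0$. A standard Noetherian reduction on the Zariski closure of $(s_i)$ in $\mathcal{M}_{g,n}$ then lets me assume the $s_i$ are generic in a fixed irreducible subvariety $S' \subseteq \mathcal{M}_{g,n}$.

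The key geometric input is the relative \emph{Faltings-Zhang (difference) surface}
\[
D \;=\; \mathrm{image}\Bigl(\mathcal{C}_{g,n}|_{S'} \times_{S'} \mathcal{C}_{g,n}|_{S'} \longrightarrow \mathcal{B}_{g,n}|_{S'},\ (q,p)\mapsto q-p\Bigr),
\]
which has relative dimension $2$ over $S'$, since $g \geq 2$ forces the Abel-Jacobi map to be generically finite. To apply Theorem \ref{theorem:equidistribution} to $D$, I must verify that $D$ is non-degenerate in the Betti-rank sense of the introduction. This is precisely the content of the Gao result \cite[Theorem 1.3]{Gao2018a} invoked earlier in the excerpt, and is the crucial input that allows the equidistribution machinery to engage at all. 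With this in hand, a diagonal extraction from the sets $T_i$, combined with the $S'$-genericity of $(s_i)$, produces a $D$-generic sequence $\xi_{i,j} := q_{i,j} - p_i \in D(\IQbar)$ with $\hhat_\iota(\xi_{i,j}) \to 0$. Theorem \ref{theorem:equidistribution} then supplies, for each archimedean $\nu$, an equidistribution measure $\mu_\nu$ on $D^{\mathrm{an}}_{\IC_\nu}$ governing the Galois orbits of the $\xi_{i,j}$ against continuous test functions compactly supported on the non-degenerate locus.

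The final step is an Ullmo-style contradiction. Since multiplication by $n$ on $\mathcal{B}_{g,n}$ multiplies $\hhat_\iota$ by $n^2$, the sequence $([n]\xi_{i,j})$ is again small and is $[n]_*D$-generic, so Theorem \ref{theorem:equidistribution} also applies to $[n]_*D$ and must return the push-forward $[n]_*\mu_\nu$. Comparing these measures along the tower obtained by iterating $[n]$, in the spirit of \cite{Ullmo1998,Zhang1998}, forces the fiberwise difference surface $C_s - C_s \subseteq \mathrm{Jac}(C_s)$ to coincide with a translate of an abelian subscheme at a generic $s$, an impossibility when $g \geq 2$.

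The principal obstacle I expect is this final contradiction step: Theorem \ref{theorem:equidistribution} only tests against functions compactly supported on $D^{\mathrm{sm}}\setminus D^{\mathrm{deg}}$, and one must guarantee that the ``mass'' driving the Ullmo contradiction is not swallowed by the degeneracy locus. A secondary difficulty is that $[n]_*D$ and its iterates live in different subvarieties of $\mathcal{B}_{g,n}|_{S'}$ whose relative structure over $S'$ must be tracked carefully; this is where the uniform height estimates of Dimitrov-Gao-Habegger \cite[Theorem 1.6]{Dimitrov2020} are likely to re-enter, ensuring that the base point $s_i$ does not drift to the boundary as one iterates.
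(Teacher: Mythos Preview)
Your overall architecture is right (reduce to a fixed base, check non-degeneracy via Gao, apply Theorem \ref{theorem:equidistribution}, and finish with an Ullmo--Zhang contradiction), but the final step is wrong as written. The comparison you propose, between the equilibrium measure on $D$ and the one on $[n]_\ast D$, produces no contradiction: by Lemma \ref{lemma:equilibrium} the $(1,1)$-form $\beta$ satisfies $[n]^\ast\beta = n^2\beta$, so the equilibrium measure $\mu_\nu \propto \beta^{\wedge d}$ transforms compatibly with $[n]$, and after normalisation the two measures agree. This is already true in the classical case over a point, where $\mu_\nu$ is the Haar measure and is invariant under $[n]$; the Ullmo--Zhang argument never uses $[n]$ for the contradiction.

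What is actually needed is the difference map on high fibered powers. The paper passes to $C^{[mm']}$ with $m' \geq \dim(S)$ (this is what Gao's criterion \cite[Theorem 1.3]{Gao2018a} requires for non-degeneracy) and uses the Faltings--Zhang map
\[
\Delta:\ C^{[mm']}\longrightarrow A^{[(m-1)m']},\qquad (\underline{x}_1,\dots,\underline{x}_m)\longmapsto(\underline{x}_1-\underline{x}_2,\dots,\underline{x}_{m-1}-\underline{x}_m),
\]
which for large $m$ is generically an isomorphism onto its image but has $d\Delta$ with nontrivial kernel along the diagonal. One then checks directly, via Lemma \ref{lemma::finallemma}, that the equilibrium measure $\mu_1$ on $C^{[mm']}$ is nonzero at a diagonal point $p=(\underline{x},\dots,\underline{x})$ while $\Delta^\ast\mu_2$ vanishes there, so $\mu_1\neq\Delta^\ast\mu_2$. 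A further point you are missing is that the contradiction alone does not give a \emph{uniform} bound: after producing the exceptional subvariety $Z\subset C^{[mm']}$ from Lemma \ref{lemma::equidistribution}, one still needs an inductive counting argument (stratifying $Z$ by successive projections $\pr_{\widehat{j}}$) to convert ``small points lie in $Z$'' into a cardinality bound independent of the fibre $s$. Your difference surface $D$ of relative dimension $2$ is the right object to set up the family, but it is the \emph{curve} $C\subset A$ over the enlarged base $S=\mathcal{C}_{g,n}$, not $D$ itself, to which one applies the fibered-power machinery.
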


In Section \ref{section::uniformity}, we deduce this theorem from Proposition \ref{proposition:bogo}, which is an analogue assertion on more general families of smooth proper algebraic curves of genus $g$ instead of universal ones. Let us remark that conversely this proposition could be also easily deduced from Theorem \ref{theorem::bogo}. However, the proposition is amenable to a proof by induction on the dimension of the base and we rely on this fact for our proof.

A related uniform bound on the essential height minima of smooth algebraic curves of a fixed genus $g$ has been published recently by Wilms \cite[Corollary 1.5]{Wilms2021}. However, it seems not yet to imply our proposition because a uniform treatment of points of height below the essential minimum is needed. 
More recently, Looper, Silverman, and Wilms \cite{Looper2021} have obtained a proof of the function field case of Theorems \ref{theorem::uniform_mm} and \ref{theorem::bogo} with explicit constants, using the same approach as in \cite{Wilms2021}. In any case, the methods used in their approach and the one here seem rather disjoint. Wilms' argument uses explicit height computations and comparisons in the setting of Jacobian embeddings, whereas our Proposition \ref{proposition:bogo} is not restricted to this setting. It should also be said that we opted for a simpler proof at the cost of introducing some additional assumptions in the proposition (e.g., the existence of a principal polarization). The reader interested in the most general result is referred to meanwhile finished work of Gao, Ge, and the author \cite{Gao2021a}.

Our Theorem \ref{theorem::bogo} should be also compared with a recent result by Dimitrov, Gao, and Habegger \cite[Proposition 7.1]{Dimitrov2021a}. In the setting of the theorem, they proved that there exist constants $c_5 = c_5(g,n,\iota)$, $c_6= c_6(g,n,\iota)$, $c_7 = c_7(g,n,\iota)$, $c_8 = c_8(g,n,\iota)>0$, a projective compactification $\overline{\mathcal{A}}_{g,n}$ of $\mathcal{A}_{g,n}$, and an ample line bundle $M$ on $\overline{\mathcal{A}}_{g,n}$ with an associated Weil height $h_M: \overline{\mathcal{A}}_{g,n}(\IQbar) \rightarrow \IR$ such that the following is true: For each $s \in \mathcal{M}_{g,n}(\IQbar)$ such that $h_M(\tau_{g,n}(s)) \geq c_5$, there exists a finite subset $\Xi_s\subseteq \mathcal{C}_{g,n,s}(\IQbar)$ of cardinality $\leq c_6$ such that
\begin{equation}
\label{equation::their_equation}
\# \left\{ q \in \mathcal{C}_{g,n,s}(\IQbar) \ \middle| \ \hhat(q-p) \leq c_7 \cdot h_M(\tau_{g,n}(s)) \right\} < c_8
\end{equation}
for all $p \in \mathcal{C}_{g,n,s}(\IQbar) \setminus \Xi_s$. (It should be said that \cite{Dimitrov2021a} uses a specific compactication endowed with an ample line bundle, but this is immaterial by \cite[Proposition 2.3]{Vojta1999}.) Whenever the ``modular height'' $h_M(\tau_{g,n}(s))$ is large enough, this implies a uniform bound on $\# (\iota_{[p]}(C) \cap \mathrm{Tors}(\mathrm{Jac}(C)))$. However, the argument in \cite{Dimitrov2021a} yields nothing if $h_M(\tau_{g,n}(s))<c_5$.

It has been already implicit in \cite{Dimitrov2021,Dimitrov2021a,Dimitrov2022} that a combination of the cardinality bounds in \eqref{equation::my_equation} and \eqref{equation::their_equation} can be used to obtain a uniform version of the Mordell-Lang conjecture \cite[Conjecture 1.1]{Dimitrov2022} through Rémond's explicit versions of Mumford's \cite{Mumford1965a, Remond2000a} and Vojta's inequality \cite{Remond2000, Vojta1991} (see \cite[Section 2]{Dimitrov2021}). As a question, it seems to be originally due to Mazur (see the question on \cite[p.\ 234]{Mazur1986} as well as the discussion on \cite[p.\ 223]{Mazur2000}).


\begin{theorem}[uniform Mordell-Lang conjecture]
	\label{theorem::uniform_ml}
	For each $g \geq 2$, there exists a constant $c_9(g) > 0$ such that the following assertion is true: For every smooth proper genus $g$ curve $C$, every divisor $D$ of degree $1$ on $C$, and every subgroup $\Gamma \subset \mathrm{Jac}(C)(\IC)$ of finite rank $\varrho$, we have
	\begin{equation}
	\label{equation::my_equation2}
	\# (\iota_D(C) \cap \Gamma) \leq c_9(g)^{1+\varrho}
	\end{equation}	
	where
	\begin{equation*}
	\iota_D: C \longrightarrow \mathrm{Jac}(C), \ q \longmapsto [q] - D,
	\end{equation*}
	is an Abel-Jacobi embedding.
\end{theorem}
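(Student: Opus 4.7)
\textbf{Proof plan for Theorem~\ref{theorem::uniform_ml}.} The strategy is to combine our uniform Bogomolov result (Theorem~\ref{theorem::bogo}) with the complementary bound \eqref{equation::their_equation} of Dimitrov-Gao-Habegger and with Rémond's explicit versions of Mumford's and Vojta's inequalities. The deduction of a uniform Mordell-Lang bound from Bogomolov-type information via Rémond's inequalities is essentially worked out in \cite{Dimitrov2019a,Dimitrov2020a,Dimitrov2020}; the genuinely new ingredient is Theorem~\ref{theorem::bogo}, which handles the range of curves where the DGH bound \eqref{equation::their_equation} gives nothing (namely $h_M(\tau_{g,n}(s))<c_5(g,n)$).

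First, a standard spreading-out and specialization argument reduces to the case where $C$, $p$, and generators of $\Gamma$ are all defined over $\IQbar$: one spreads $(C,p,\Gamma)$ out to a family over an integral $\IQbar$-variety and specializes to a $\IQbar$-point of the base without decreasing $\#(\iota_p(C)\cap\Gamma)$. Fix $n=3$; after replacing $C$ by a connected component of a finite étale cover (with Galois group a subgroup of $\Sp_{2g}(\IF_3)$), one may equip $\mathrm{Jac}(C)$ with a symplectic level-$n$ structure and obtain a point $s \in \mathcal{M}_{g,n}(\IQbar)$, at the cost of a multiplicative change in the intersection count by a constant depending only on $g$.

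Next, split according to the modular height $h_M(\tau_{g,n}(s))$. If $h_M(\tau_{g,n}(s)) \leq c_5(g,n)$, Theorem~\ref{theorem::bogo} bounds by $c_4(g,n)$ the number of $q \in \mathcal{C}_{g,n,s}(\IQbar)$ with $\hhat_\iota(q-p) \leq c_3(g,n)$, and this fixed threshold suffices to feed into Rémond's inequalities because the ambient Faltings height is bounded. If $h_M(\tau_{g,n}(s)) > c_5(g,n)$, then \eqref{equation::their_equation} plays the analogous role with threshold $c_7(g,n) \cdot h_M(\tau_{g,n}(s))$ now large enough to activate Vojta's inequality; the at most $c_6(g,n)$ exceptional points $p \in \Xi_s$ can be treated by reverting to the first case or simply absorbed into the final constant.

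Finally, for $q \in \iota_p(C)\cap\Gamma$ of height above the Bogomolov threshold, equip $\Gamma\otimes\IR \cong \IR^\varrho$ with the Euclidean structure coming from the Néron-Tate pairing and cover its unit sphere by at most $c^\varrho$ cones of sufficiently small opening angle. Rémond's Mumford inequality \cite{Remond2000a} forces the heights of points of $\iota_p(C)\cap\Gamma$ lying in a single cone to grow geometrically, leaving $O_g(1)$ height scales to consider, while Rémond's Vojta inequality \cite{Remond2000} bounds the number of points of comparable height in a single cone by a constant depending only on $g$. Summing over cones yields
\begin{equation*}
\#(\iota_p(C)\cap\Gamma) \leq c_9(g)^{1+\varrho}
\end{equation*}
for a suitable $c_9(g)$. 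The principal delicate point is verifying that Rémond's constants genuinely depend only on $g$ after fixing $n=3$ and an immersion of $\mathcal{B}_{g,n}$, and that the reduction steps (spreading-out, level-$n$ cover, and the modular-height dichotomy) do not introduce any hidden $\varrho$-dependence into the base constant as opposed to the exponent; both points are standard once cleanly isolated.
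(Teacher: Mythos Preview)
Your outline matches the paper's own treatment: the paper does not give a self-contained proof of Theorem~\ref{theorem::uniform_ml} but explicitly refers to \cite{Dimitrov2019a,Dimitrov2020a,Dimitrov2020} and \cite{Pazuki}, stating that the combination of Theorem~\ref{theorem::bogo} (the bound \eqref{equation::my_equation}) with the DGH bound \eqref{equation::their_equation} renders Proposition~2.5 of \cite{Dimitrov2020a} unconditional, after which the cone-covering argument via R\'emond's Mumford and Vojta inequalities and the specialization from $\IC$ to $\IQbar$ (\cite[Section~3]{Dimitrov2020a}) complete the proof. One small correction: to obtain a level-$n$ structure over $\IQbar$ you do not pass to an \'etale cover of $C$; a symplectic basis of $\mathrm{Jac}(C)[n]$ already exists over $\IQbar$, so the point $s\in\mathcal{M}_{g,n}(\IQbar)$ is obtained directly without altering $C$ or the intersection count.
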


The choice $\Gamma=\mathrm{Tors}(\mathrm{Jac}(C))$ leads back to Theorem \ref{theorem::uniform_mm}. Choosing instead $\Gamma=\mathrm{Jac}(C)(K)$ for a smooth algebraic curve $C$ defined over $K$ with Mordell-Weil rank $\varrho$, we obtain the rather uniform bound
\begin{equation*}
	\# C(K) \leq c_{10}(g)^{1+\varrho}
\end{equation*}
on the number of rational points. A weaker bound 
\begin{equation*}
	\# C(K) \leq c_{11}(g,[K:\IQ])^{1+\varrho}
\end{equation*}
has been already obtained by Dimitrov, Gao, and Habegger \cite{Dimitrov2021a}. It is not surprising that their proof rests on \eqref{equation::their_equation}. Furthermore, Stoll \cite{Stoll2019} used the Chabauty-Coleman method \cite{Chabauty1941, Coleman1985} to prove a bound of the form
\begin{equation}
\label{equation::their_equation3}
\# C(K) \leq c_{12}(g,[K:\IQ])
\end{equation}
if $C$ is a hyperelliptic curve of genus $g$ with Mordell-Weil rank $\varrho \leq (g - 3)$. The condition that $C$ is a hyperelliptic curve was lifted later by Katz, Rabinoff, and Zureick-Brown \cite{Katz2016}. Without any assumption on the Mordell-Weil rank $\varrho$, they also obtained a uniform bound on the number of \textit{$K$-rational} torsion points contained in $\iota_D(C)$, which was already superseded by the results of \cite{Dimitrov2021a}. The bounds in \cite{Katz2016,Stoll2019} are very explicit and rather small. In contrast, it seems still an open problem to prove that the constant $c_9(g)$ in our theorem is effectively computable.

We do not give a deduction of Theorem \ref{theorem::uniform_ml} from the cardinality bounds \eqref{equation::my_equation} and \eqref{equation::their_equation} since there would be nothing to add to \cite{Dimitrov2021,Dimitrov2021a,Dimitrov2022} or \cite{Pazuki2021}. In fact, the proof is a straightforward adjustment of the proof of \cite[Proposition 8.1]{Dimitrov2021a} from \cite[Proposition 7.1]{Dimitrov2021a}. The reader can find this adapted proof in \cite[Section 2.4]{Dimitrov2022} and consult \cite[Section 3]{Dimitrov2022} for the specialization argument from $\IC$ to $\IQbar$. In fact, a combination of \eqref{equation::my_equation} and \eqref{equation::their_equation} yields the assertion of Proposition 2.5 in \textit{loc.}\ \textit{cit.}\ unconditionally (i.e., without assuming the relative Bogomolov conjecture). This renders the proof of Conjecture 1.1 in \textit{loc.}\ \textit{cit.}, which corresponds to our Theorem \ref{theorem::uniform_ml}, unconditional. The proof rests on previous work of (in chronological order) Mumford \cite{Mumford1965a}, Vojta \cite{Vojta1991}, Bombieri \cite{Bombieri1990}, Faltings \cite{Faltings1991}, Rémond \cite{Remond2000a,Remond2000}, as well as Dimitrov, Gao, and Habegger \cite{Dimitrov2021,Dimitrov2021a}. Furthermore, Gao has meanwhile written a comprehensive survey \cite{Gao2021}. 

The author's original motivation to prove Theorem \ref{theorem:equidistribution} was the relative Bogomolov conjecture \cite[Conjecture 1.2]{Dimitrov2022}, which is the Bogomolov-type analogue of Pink's relative Manin-Mumford conjecture \cite[Conjecture 6.2]{Pink2005a} and is also related to a conjecture in Zhang's ICM talk \cite{Zhang1998a}. In fact, Theorem \ref{theorem:equidistribution} can be used to affirm this conjecture for all subvarieties in fibered products of elliptic families, generalizing \cite[Theorem 1.4]{DeMarco2020}. The reader can find details in the separate article \cite{Kuehne2023a}.

Finally, it should be said that the results of both Bogomolov type as well as of Mordell-Lang type presented here have been extended beyond curves in a recent joint work with Gao and Ge \cite{Gao2021a}. In the case of curves, Yuan \cite{Yuan2021} also obtained improved versions of our Theorems \ref{theorem::uniform_mm} and \ref{theorem::bogo} by a different approach.

\textbf{Notation and conventions.} \textit{General.}
For two terms $a$ and $b$, we write $a \ll b$ if there exists a positive real number $c$ such that $a \leq c \cdot b$. If $c$ depends on some data, say an algebraic variety $X$, we write $a \ll_X b$ etc. We use $\gg$ similarly. In each of the following sections, we may suppress the dependence of constants on some basic data for readability. Where this is done, it is indicated at the beginning of the respective section.

\textit{Number fields.} Throughout this article, we let $K$ denote a number field with integer ring $\caO_K$. In addition, $\Sigma_f(K)$ (resp.\ $\Sigma_\infty(K)$) is the set of non-archimedean (resp.\ archimedean) places, and we set $\Sigma(K) = \Sigma_f(K) \cup \Sigma_\infty(K)$. 
By $\IC_\nu$ is denoted a completion of an algebraic closure $\overline{K}_\nu$ of $K_\nu$, 
and by $p_\nu$ the residue characteristic of $\IC_\nu$. For all $\nu \in \Sigma_f(K)$, the absolute value $\vert \cdot \vert_\nu$ on $\IC_\nu$ is normalized such that $\vert p_\nu \vert_\nu = p_\nu^{-[K_\nu:\IQ_{p_\nu}]}$. 
We use the standard absolute values on $\IR$ and $\IC$ for the archimedean places. This normalization leads to an additional factor
\begin{equation*}
\delta_\nu =
\begin{cases}
2 & \text{if $\nu$ is complex archimedean,} \\
1 & \text{otherwise},
\end{cases}
\end{equation*}
in some identities.

\textit{Algebraic geometry (General).} Denote by $k$ an arbitrary field. A \textit{$k$-variety} is a separated, reduced scheme of finite type over $k$. By a \textit{subvariety} of a $k$-variety we mean a closed reduced subscheme. A subvariety is determined by its underlying topological space and we frequently identify both. The tangent bundle of a $k$-variety $X$ is written $TX$ and its fiber over a point $x \in X$ is denoted by $T_x X$. Furthermore, $X^{\mathrm{sm}}$ denotes the smooth locus of $X$. If $X$ is an irreducible $k$-variety, we write $\eta_X$ for its generic point. 

For a non-negative integer $d$ and a $k$-variety $X$, a \textit{$d$-cycle} on $X$ is a finite formal sum $\sum_{i=1}^{r} n_i [Z_i]$ where each $n_i$ is an integer and each $Z_i$ is a $k$-irreducible subvariety of $X$ having dimension $d$. 


For a map $f: X \rightarrow Y$ between algebraic varieties and a point $y \in Y$ with residue field $k(y)$, we write $X_y$ for the fiber $X \times_Y \mathrm{Spec}(k(y))$ over $y$. We use a similar notation for $S$-points $y \in Y(S)$, $S$ an arbitrary scheme.

\textit{Generic sequences.} Let $X$ be an irreducible algebraic $k$-variety. We say that a sequence $(x_i) \in X^\IN$ of closed points is \textit{$X$-generic} if none of its subsequences is contained in a proper algebraic subvariety of $X$. If the variety $X$ can be inferred from context, we simply say \textit{generic} instead of $X$-generic.

\textit{Line bundles and intersection theory.} For line bundles $L_1, L_2, \dots, L_d$ on a proper algebraic variety $X$ of dimension $d$ over a field $k$, we use the intersection numbers
\begin{equation*}
L_1 \cdot L_2 \cdots L_d \in \IZ
\end{equation*}
defined by Kleiman \cite{Kleiman1966} and Snapper \cite{Snapper1960} (see \cite[Section VI.2]{Kollar1996} for a good introduction). These coincide with the numbers
\begin{equation*}
\deg(c_1(L_1) \cap c_1(L_2) \cdots \cap c_1(L_d) \cap [X]) \in \IZ
\end{equation*}
in the terminology of \cite{Fulton1998}. If $\{ M_1, M_2, \dots, M_r \} = \{ L_1, L_2, \dots, L_d \}$ and each $M_i$ occurs precisely $n_i$ times among $L_1,L_2,\dots,L_d$, we set
\begin{equation*}
M_1^{n_1} \cdot M_2^{n_2} \cdots M_r^{n_r} := L_1 \cdot L_2 \cdots L_d.
\end{equation*}
A similar notation is used for the arithmetic intersection numbers defined in Subsection \ref{subsection::arithmeticintersectionnumbers}. Furthermore, we write $\deg_L(X)$ for $L^{d}$. 

The group law of Picard groups of line bundles, as well as of their arithmetic analogue introduced in Subsection \ref{subsection::hermitian}, is written additively. 

\textit{Continuous functions.} 
We use $\mathscr{C}^0$ as an abbreviation for continuous. For any topological space $X$, $\mathscr{C}^0(X)$ denotes the real-valued continuous functions on $X$ and $\mathscr{C}^0_c(X)$ the real-valued continuous functions on $X$ having compact support. We use the analogous notation $\mathscr{C}^\infty$ for the real-valued smooth functions on complex spaces that are introduced below.

\textit{Tangent spaces.}
For each differentiable or real-analytic manifold $M$ we denote by $TM$ its tangent bundle. The fiber of $TM$ over $x \in M$ is denoted $T_x M$.

Let $M$ be a complex manifold (e.g., $(X^{\mathrm{sm}})^{\mathrm{an}}_{\IC_\nu}$  for an algebraic variety $X$ over $K$ and some $\nu \in \Sigma_\infty(K)$). To $M$ is associated its real tangent bundle $T_{\IR} M$ and its holomorphic tangent bundle $T^{1,0}_{\IC} M$ (e.g., the analytification $(T X)^{\mathrm{an}}$ of the tangent bundle of a smooth complex algebraic variety $X$). The reader is referred to \cite[Section 0.2]{Griffiths1994} and \cite[Section 1.2]{Huybrechts2005} for details.

\textit{Complex spaces.} Let $M$ be a reduced complex (analytic) space (e.g., $X^{\mathrm{an}}_{\IC_\nu}$ for a $K$-variety $X$ and some $\nu \in \Sigma_\infty(K)$). Recall that this means that $M$ is locally biholomorphic to a closed analytic subvariety $V$ of a complex domain $U \subset \IC^n$. A function $f$ on $M$ is \textit{smooth} if, for each such sufficiently small local chart, it is the restriction of a smooth function on $U$. We write $\mathscr{C}^\infty(M)$ for the smooth real-valued functions on $M$. In the same way, we use local charts to define \textit{plurisubharmonic} functions on $M$ as restrictions of plurisubharmonic functions. 

Similarly, a smooth form $\omega$ on $M$ is a differential form on the smooth locus $M^\mathrm{sm}$ of $M$ with the following extension property: $M$ can be covered by local charts $V \subset U \subset \IC^n$ as above such that for each chart the differential form $\omega|_{V^{\mathrm{sm}}}$ is the restriction of a $\mathscr{C}^\infty$-differential form on $U$. There are also well-defined linear operators $d$ and $d^c = i/2\pi(\delbar - \del)$ on the $\mathscr{C}^\infty$-differential forms on $M$.\footnote{\label{footnote::dc}Note that there are different definitions of $d^c$ in the literature. Many texts on Arakelov theory (e.g.\ \cite{Gubler2003,Moriwaki2014,Soule1992}) use $d^c=i/4\pi(\overline{\partial}-\partial)$.} For each local chart $V \subset U \subset \IC^n$, these are simply the restrictions of the operators of the same name on $\IC^n$. Recall that for every reduced complex space $M$ of dimension $d$ and every smooth $(d,d)$-form $\omega$, the integral $\int_{M} \omega$ is well-defined by classic work of Lelong \cite{Lelong1957}. In fact, this integral is by definition the ordinary integral $\int_{M^{\mathrm{sm}}} \omega$ over a(n in general non-compact) complex manifold $M^{\mathrm{sm}}$, whose finiteness is proven in \cite{Lelong1957}.

We remark that the usual rules for integration on complex manifolds still apply for reduced complex spaces and we use them without further comment. In particular, let $f: M \rightarrow N$ be a finite branched holomorphic covering of pure order $q$ (see \cite[Chapter C]{Gunning1990a} for definitions) between reduced complex analytic spaces of (necessarily the same) dimension $d$: Let $\eta$ (resp.\ $\eta^\prime$) be a smooth $(d,d)$-form on $M$ (resp.\ $N$) such that $\eta = f^\ast \eta^\prime$. Then, 
\begin{multline}
	\label{equation::substitution_rule}
	\int_{M} \eta = \int_{M^{\mathrm{sm}}} \eta = \int_{M^\mathrm{sm} \cap f^{-1}(N^{\mathrm{sm}})} \eta = q \int_{N^{\mathrm{sm}} \cap f(M^{\mathrm{sm}})} \eta^\prime = q \int_{N^{\mathrm{sm}}} \eta^\prime = q \int_{N} \eta^\prime.
\end{multline}
Here, the equalities follow from the definitions, discarding or adding a null set, and the ordinary substitution rule in the smooth setting. 


\textit{Moduli spaces of abelian varieties}. We work with certain moduli functors on schemes over $\IQbar$. Note that for every abelian schemes $\pi: A \rightarrow S$, there exists a dual abelian scheme $A^\vee$ by \cite[Corollary 6.8]{Mumford1994}.\footnote{Note that \cite[Chapters 6 and 7]{Mumford1994} work only with locally noetherian schemes. However, this can be dropped as every abelian scheme is the base change of an abelian scheme over a $\IZ$-algebra of finite type (compare \cite[footnote on p.\ 76]{Mumford1967}).} With a scheme $S$ over $
\IQbar$, we associate similar to \cite[Definition 7.2]{Mumford1994} the set $\mathcal{A}_{g,d,n}(S)$ consisting of triples $(\pi, \lambda, \sigma)$ where
\begin{enumerate}
	\item[(i)] $\pi: A \rightarrow S$ is an abelian scheme of relative dimension $g$;
	\item[(ii)] $\lambda: A \rightarrow A^\vee$ is a polarization (\cite[Definition 27.280]{Goertz2023}) of degree $d^2$ (i.e., $\lambda_\ast 
	\mathcal{O}_A$ is locally free of rank $d^2$);
	\item[(iii)] $\sigma: A[n]  \xrightarrow{\sim} (\IZ/n\IZ)_S^{2g}$ is an isomorphism of finite flat group schemes over $S$ that respects the sympletic structures on both sides. Here $A[n]$ is endowed with the symplectic structure induced by the Weil pairing whereas  $(\IZ/n\IZ)_S^{2g}$ is endowed with the standard sympletic structure.\footnote{Note that the level structure in \cite{Mumford1994} disregards sympletic structures. We do not follow this here as the sympletic structure fits with the Jacobi structures used for moduli of curves in \cite{Deligne1969}.}
\end{enumerate}
In the obvious way, this assignment is functorial. If $n \geq 3$, it is a smooth quasi-projective variety (see \cite[Theorem 7.9]{Mumford1994} and the ``lemma of Serre'' \cite{Serre1962}). The associated universal family of abelian varieties is written $\pi_{g,d,n}: \mathcal{B}_{g,d,n} \rightarrow \mathcal{A}_{g,d,n}$. In \cite[Theorem 2.1.11]{Olsson2012}, it is deduced from the results of \cite{Mumford1994} that the functor $\mathcal{A}_{g,d,n}$ is still a smooth Deligne–Mumford stack over $\IQbar$. We write $\mathcal{A}_{g,n}$, $\mathcal{B}_{g,n}$, $\pi_{g,n}$ instead of $\mathcal{A}_{g,1,n}$, $\mathcal{B}_{g,1,n}$, $\pi_{g,1,n}$.

The kernel $\mathrm{ker}(\lambda)$ of any polarization $\lambda: A \rightarrow A^\vee$ of degree $d^2$ is a finite locally free group scheme (\cite[Corollary 27.177]{Goertz2023}). If $S$ is connected, there exists a unique tuple
\begin{equation*}
	\boldsymbol{\Delta}=(\delta_1,\dots,\delta_g)
\end{equation*}
with positive integers $\delta_i$ satisfying $\delta_i \mid \delta_{i+1}$ ($1 \leq i < g$) such that, for every geometric point $\overline{s}$ of $S$, the kernel of $\lambda_{\overline{s}}: A_{\overline{s}}\rightarrow A^\vee_{\overline{s}}$ is isomorphic to the constant group scheme $\prod_{i=1}^g (\IZ/\delta_i\IZ)_{\overline{s}}^2$. This follows, for example, by combining \cite[Lemma 2.1.9]{Olsson2008}, \cite[Corollary of Theorem 1]{Mumford1966} and \cite[Proposition 1]{Mumford1967}. Necessarily, we have $d = \delta_1 \cdot \delta_2 \cdots \delta_g$. We call $\boldsymbol{\Delta}$ the type of the polarization.


For any $\boldsymbol{\Delta}$ as above, we define $\mathcal{A}_{g,\boldsymbol{\Delta},n}$ as the subfunctor of $\mathcal{A}_{g,d,n}$ comprising triples $(\pi,\lambda,\sigma)$ such that the polarization $\lambda$ is of a given type $\boldsymbol{\Delta}$. As the type of a polarization is locally constant by the above, this induces a partition of $\mathcal{A}_{g,d,n}$ into open substacks $\mathcal{A}_{g,\boldsymbol{\Delta},n}$. As $\mathcal{A}_{g,d,n}$ is a smooth quasi-projective variety over $\IQbar$ if $n \geq 3$, each $\mathcal{A}_{g,\boldsymbol{\Delta},n}$ is a smooth quasiprojective variety over $\IQbar$ as well. These varieties are of particular interest for us as their associated complex analytic spaces $\mathcal{A}_{g,\boldsymbol{\Delta},n,
\IC}^{\mathrm{an}}$ have a complex uniformization by the Siegel modular upper half-space
\begin{equation*}
	\mathcal{H}_g = \left\{ \underline{\tau} \in \IC^{g\times g} \ \middle| \ \underline{\tau} = \underline{\tau}^t  \ \text{and} \ \text{$\mathrm{Im}(\tau)\in \IR^{g\times g}$ positive definite} \right\}
\end{equation*}
of degree $g$. Recall that $\mathcal{H}_g$ is endowed with a standard action of $\mathrm{Sp}_{2g}(\IZ)$. In fact, the complex manifold $\mathcal{A}_{g,\boldsymbol{\Delta},n,\IC}^{
\mathrm{an}}$ is the quotient of $\mathcal{H}_g$ by the subgroup  $\Gamma_{\boldsymbol{\Delta}} \cap \Gamma(N) \subseteq \mathrm{Sp}_{2g}(\IZ)$ where
\begin{equation*}
	\Gamma_{\boldsymbol{\Delta}} = \left\{ \mathbf{A} \in \mathrm{Sp}_{2g}(
	\IZ) \mid \mathbf{A} 
	\begin{pmatrix}
	\boldsymbol{0}_{g \times g} & \mathrm{diag}(\boldsymbol{\Delta}) \\
	\mathrm{diag}(\boldsymbol{\Delta}) & \boldsymbol{0}_{g \times g}
	\end{pmatrix} 
	\mathbf{A}^t 
	= 
	\begin{pmatrix}
	\boldsymbol{0}_{g \times g} & \mathrm{diag}(\boldsymbol{\Delta}) \\
	\mathrm{diag}(\boldsymbol{\Delta}) & \boldsymbol{0}_{g \times g}
	\end{pmatrix}
	\right\}
\end{equation*}
and $\Gamma(N)$ is the kernel of the reduction $\mathrm{Sp}_{2g}(\IZ)\rightarrow \mathrm{Sp}_{2g}(\IZ/N\IZ)$. The universal family over $\mathcal{A}^{\mathrm{an}}_{g,\boldsymbol{\Delta},n,\IC}$ descends from the universal family $\pi_{g,\boldsymbol{\Delta}}: \mathcal{A}_{g,\boldsymbol{\Delta}} \rightarrow \mathcal{H}_g$ of complex abelian varieties with polarization type $\boldsymbol{\Delta}$ over the Siegel modular upper half-space defined in \cite[Section 8.1]{Birkenhake2004}. Furthermore, a line bundle $L_{g,\boldsymbol{\Delta}} \rightarrow \mathcal{A}_{g,\boldsymbol{\Delta}}$  inducing the given polarization is constructed in \cite[Section 8.7]{Birkenhake2004}.

\textit{Moduli spaces of curves and the Torelli map}.
Let $\mathcal{M}_{g,n}$ denote the moduli stack over $\IQbar$ parameterizing geometrically irreducible smooth projective curves with Jacobi structure of level $n$ (as in \cite[(5.14)]{Deligne1969}) and write $\pi^\prime_{g,n}:\mathcal{C}_{g,n} \rightarrow \mathcal{M}_{g,n}$ for the universal family over $\mathcal{M}_{g,n}$. If $n \geq 3$, $\mathcal{M}_{g,n}$ is a smooth quasi-projective variety (again by Serre's lemma \cite{Serre1962}). Associating to a curve its Jacobian induces a morphism $\tau_{g,n}: \mathcal{M}_{g,n} \rightarrow \mathcal{A}_{g,n}$ of stacks, the Torelli map with level $n$ structure (compare \cite{Oort1980}). 

\section{Heights}
\label{section:heights}

In this section, we briefly review heights and their basic properties as we need them. We closely follow the notations introduced in \cite[Section 2]{Kuehne2022}, and we refer to there for a detailed exposition of arithmetic intersection theory \cite{Gillet1990} in the setting needed here. Primary sources for arithmetic intersection theory include \cite{ Bost1994, Faltings1991,  Gillet1990, Gubler1998, Gubler2003, Yuan2008, Zhang1995, Zhang1995a}. Furthermore, we mention the introductory textbooks \cite{Moriwaki2014,Soule1992}.


Recall that $K$ is a number field with integer ring $\mathcal{O}_K$. We consider a flat, integral, projective $\mathcal{O}_K$-scheme $\mathcal{X}$ of relative dimension $d$ over $\mathcal{O}_K$. Its generic fiber $X \subseteq \IP^N_K$ is an irreducible, projective $K$-variety. In this setting, we say that $\mathcal{X}$ is a $\mathcal{O}_K$-model of $X$.

\subsection{Hermitian line bundles on arithmetic varieties}
\label{subsection::hermitian}

A $\mathscr{C}^\infty$-hermitian line bundle $\overline{\mathcal{L}}$ on $\mathcal{X}$ is a collection $(\mathcal{L}, \{ \Vert \cdot \Vert_\nu \}_{\nu \in \Sigma_\infty(K)})$ consisting of a line bundle $\mathcal{L}$ on $\mathcal{X}$ and a $\mathscr{C}^\infty$-hermitian metric $\Vert \cdot \Vert_\nu: \mathcal{L} _{\IC_\nu}^{\mathrm{an}} \rightarrow \IR$ for each $\nu \in \Sigma_\infty(K)$. If $K_\nu = \IR$, we assume additionally that the $\nu$-metric is invariant under $\Gal(\IC_\nu/K_\nu)$ (i.e., under complex conjugation on $X_{\IC_\nu}^{\mathrm{an}}$). We say that $\overline{\mathcal{L}}$ is vertically semipositive if $\mathcal{L}$ is relatively nef with respect to $\mathcal{X} \rightarrow \mathrm{Spec}(\mathcal{O}_K)$ (i.e., its restriction to every fiber is nef) and each $\mathscr{C}^\infty$-hermitian metric $\Vert \cdot \Vert_\nu$, $\nu \in \Sigma_\infty(K)$, is semipositive (i.e., its associated Chern form is semipositive\footnote{The reference \cite{Kuehne2022} contains a typo (a superfluous $dd^c$) in the definition of semipositivity.}).

Two hermitian line bundles $\overline{\mathcal{L}}$ and $\overline{\mathcal{M}}$ are called isometric if there is an isomorphism $\mathcal{L} \approx \mathcal{M}$ preserving the metrics at all archimedean places. The arithmetic Picard group $\widehat{\Pic}_{\mathscr{C}^\infty}(\mathcal{X})$ is the set of isometry classes of $\mathscr{C}^\infty$-hermitian line bundles on $\mathcal{X}$. For $\Lotil, \Motil \in \widehat{\Pic}_{\mathscr{C}^\infty}(\mathcal{X})$, we define $\Lotil + \Motil$, $-\Lotil$ as elements of $\widehat{\Pic}_{\mathscr{C}^\infty}(\mathcal{X})$ in the obvious way, obtaining a group structure on $\widehat{\Pic}_{\mathscr{C}^\infty}(\mathcal{X})$. Given a morphism $f: \mathcal{Y} \rightarrow \mathcal{X}$ of flat, integral, projective $\mathcal{O}_K$-schemes, we define similarly the pullback $f^\ast \Lotil \in\widehat{\Pic}_{\mathscr{C}^\infty}(\mathcal{Y})$ of any $\Lotil \in \widehat{\Pic}_{\mathscr{C}^\infty}(\mathcal{X})$.

The notion of vertical semipositivity introduced above descends to the isomorphism classes in $\widehat{\Pic}_{\mathscr{C}^\infty}(\mathcal{X})$. 
It further extends to $\widehat{\Pic}_{\mathscr{C}^\infty}(\mathcal{X})_\IQ = \widehat{\Pic}_{\mathscr{C}^\infty}(\mathcal{X}) \otimes_\IZ \IQ$. The elements of $\widehat{\Pic}_{\mathscr{C}^\infty}(\mathcal{X})_\IQ$ are called the $\mathscr{C}^\infty$-hermitian $\IQ$-line bundles on $\mathcal{X}$.

Every $\mathscr{C}^\infty$-hermitian line bundle $\overline{\mathcal{L}}$ on $\mathcal{X}$ gives rise to a metrized line bundle $\overline{\mathcal{L}}_K$ on $X$.  Such line bundles are called algebraically metrized (see \cite[Section 2.5]{Kuehne2022} for details). In addition, the metrized line bundle $\Lotil_K$ is vertically semipositive in the sense of \cite[Section 2.5]{Kuehne2022} if $\overline{\mathcal{L}}$ is so in the sense introduced above. (We do not claim the converse implication.) Furthermore, we note that $\Lotil_K$ is vertically integrable in the sense of \cite[Section 2.5]{Kuehne2022} for \textit{every} $\mathscr{C}^\infty$-hermitian line bundle $\overline{\mathcal{L}}$ on $\mathcal{X}$. In fact, we can always write $\overline{\mathcal{L}}$ as a difference $(\overline{\mathcal{L}} + \overline{\mathcal{O}}(k)|_{\mathcal{X}}) - \overline{\mathcal{O}}(k)|_{\mathcal{X}}$ with $\overline{\mathcal{O}}(k)$ being the $\mathscr{C}^\infty$-hermitian line bundle on $\IP^N_{\mathcal{O}_K}$ defined below. For sufficiently large integers $k$, both terms are semipositive because $\mathcal{O}(k)$ is ample and the metric on $\overline{\mathcal{O}}(k)$ is strictly positive.


Let us define the basic $\mathscr{C}^\infty$-hermitian line bundles that we use in this work. We consider the line bundle $\mathcal{O}(1)$ on $\IP^N_{\mathcal{O}_K}$ where $N$ is a positive integer. For each archimedean place $\nu \in \Sigma_\infty(K)$, we endow the holomorphic line bundle $\mathcal{O}(1)_{\IC_\nu}^\mathrm{an}$ on $\IP_{\IC_\nu}^N$ with the  Fubini-Study metric $\Vert \cdot \Vert_{\mathrm{FS},\nu}$ (see \cite[Subsection 3.3.2]{Voisin2007} for a definition). The Fubini-Study metric is smooth, so that we obtain a $\mathscr{C}^\infty$-hermitian line bundle $\overline{\caO}(1) = (\caO(1), \{ \Vert \cdot \Vert_{\mathrm{FS},\nu}\}_\nu) \in \widehat{\Pic}_{\mathscr{C}^\infty}(\IP^N_{\caO_K})$. For every integer $k$, we set $\overline{\caO}(k) := k \cdot \overline{\caO}(1)$. We write $\overline{\mathcal{O}}$ instead of $\overline{\mathcal{O}}(0)$. As the Chern form $\omega_{\mathrm{FS}}$ of the Fubini-Study metric is a strictly positive $\mathscr{C}^\infty$-hermitian form, each $\overline{\caO}(k)$ is vertically semipositive if $k \geq 1$. Note that we suppress the dimension $N$ here, as it will be always clear from context. On a biprojective space $\IP^{N_1}_{\caO_K} \times \IP^{N_2}_{\caO_K}$, we consider the $\mathscr{C}^\infty$-hermitian line bundles $\overline{\caO}(k_1,k_2)=\pr_1^\ast \overline{\caO}(k_1) + \pr^\ast_2 \overline{\caO}(k_2) \in \widehat{\Pic}_{\mathscr{C}^\infty}(\IP_{\caO_K}^{N_1} \times \IP_{\caO_K}^{N_2})$ where $\pr_i: \IP^{N_1}_{\caO_K}\times \IP^{N_2}_{\caO_K} \rightarrow \IP^{N_i}_{\caO_K}$, $i \in \{1,2\}$, is the projection to the $i$-th factor and $k_1,k_2$ are arbitrary integers. 

Consider next the $\mathscr{C}^\infty$-hermitian line bundle $(\mathcal{O}_\mathcal{X}, \{ \vert \cdot \vert_\nu \})$ where the hermitian metrics $|\cdot|_\nu$, $\nu \in \Sigma_\infty(K)$, are the trivial metrics. For a place $\nu \in \Sigma_\infty(K)$ and a $\Gal(\IC_{\nu}/K_{\nu})$-invariant function $f \in \mathscr{C}^\infty(X^{\mathrm{an}}_{\IC_{\nu}})$, we define furthermore $\overline{\mathcal{O}}(f) = (\mathcal{O}_{\mathcal{X}}, \{ \Vert \cdot \Vert_\nu \})$ by setting $\Vert \cdot \Vert_{\nu} = e^{-f}\vert \cdot \vert_{\nu}$ and $\Vert \cdot \Vert_\mu = \vert \cdot \vert_\mu$ for all places $\mu \in \Sigma_\infty(K) \setminus \{ \nu \} $. For a general $\Lotil \in \widehat{\Pic}_{\mathscr{C}^\infty}(\mathcal{X})$, we set $\Lotil(f) = \Lotil + \overline{\mathcal{O}}(f)$. 

\subsection{Arithmetic intersection numbers}
\label{subsection::arithmeticintersectionnumbers}
Given $\Lotil_1,\dots,\Lotil_{d^\prime+1} \in \widehat{\Pic}_{\mathscr{C}^\infty}(\mathcal{X})$, an arithmetic intersection number
\begin{equation}
\label{equation:intersection_number}
\Lotil_{1} \cdot \Lotil_{2} \cdots \Lotil_{d^\prime+1} \cdot \mathfrak{Z} \in \IR
\end{equation}
is defined in \cite[Section 2.6]{Kuehne2022} for every $d^\prime$-cycle $\mathfrak{Z}$ on $X$. 
To simplify our notation, we write just $$\Lotil_1 \cdot \Lotil_2 \cdots \Lotil_{d+1}$$ if additionally $d=d^\prime$ and $\mathfrak{Z}=[X]$. Note that the definition of \eqref{equation:intersection_number} in \cite{Kuehne2022} relies on Gubler's theory of local heights \cite{Gubler1997,Gubler1998,Gubler2003}. As we only work with  ${\mathscr{C}^\infty}$-hermitian line bundles on arithmetic varieties, the ``global'' approach as in \cite{Bost1994, Gillet1990, Soule1992} would be sufficient as well, and both approaches yield the same intersection numbers (\cite[Remark 11.24]{Gubler2003}).

For convenience of the reader, we briefly recall some properties of arithmetic intersection numbers as a lemma.

\begin{lemma}
	\label{lemma::intersectionnumber}
	Let $\Lotil_1,\dots,\Lotil_{d+1},\Lotil_1^\prime \in \widehat{\Pic}_{\mathscr{C}^\infty}(\mathcal{X})$ be vertically integrable. 
	\begin{enumerate}
		\item[(a)] (Multilinearity) We have
		\begin{equation*}
		(\Lotil_1 + \Lotil_1^\prime) \cdot \Lotil_2 \cdots \Lotil_{d+1} =
		\Lotil_1 \cdot \Lotil_2 \cdots \Lotil_{d+1} +\Lotil_1^\prime \cdot \Lotil_2 \cdots \Lotil_{d+1}.
		\end{equation*}
		\item[(b)] (Commutativity) For any permutation $\sigma: \{ 1, \dots, d + 1 \} \rightarrow \{1, \dots, d + 1 \}$,
		\begin{equation*}
		\Lotil_{\sigma(1)} \cdot \Lotil_{\sigma(2)} \cdots \Lotil_{\sigma(d+1)} = \Lotil_1 \cdot \Lotil_2 \cdots \Lotil_{d+1}.
		\end{equation*}
	\end{enumerate}
\end{lemma}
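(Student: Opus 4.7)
Both assertions are standard properties that ultimately trace back to the definition of the arithmetic intersection number recalled in \cite[Section 1.6]{Kuehne2018}. My plan is to first reduce everything to the vertically semipositive case by a routine bilinearity argument, and then to invoke the known properties of local heights in Gubler's theory.

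For part (a), multilinearity, the observation is that vertical integrability is preserved under addition: if $\Lotil_1 = \Lotil_1^{(+)} - \Lotil_1^{(-)}$ and $\Lotil_1^\prime = \Lotil_1^{(+,\prime)} - \Lotil_1^{(-,\prime)}$ are expressed as differences of vertically semipositive line bundles, then $\Lotil_1 + \Lotil_1^\prime = (\Lotil_1^{(+)} + \Lotil_1^{(+,\prime)}) - (\Lotil_1^{(-)} + \Lotil_1^{(-,\prime)})$ is again a difference of vertically semipositive line bundles. Since the arithmetic intersection number is extended from the semipositive setting by $\IZ$-bilinearity in each argument (which is exactly how \eqref{equation:intersection_number} is set up in \cite[Section 1.6]{Kuehne2018}), it suffices to verify multilinearity when all entries are semipositive. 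In that case the identity is immediate from the definition of local heights at non-archimedean places \cite[Section 3]{Gubler2003} and from linearity of the integration of $\log \Vert \cdot \Vert_\nu^{-1}$ against currents of the form $c_1(\Lotil_2|_\nu) \wedge \cdots \wedge c_1(\Lotil_{d^\prime+1}|_\nu)$ at archimedean places.

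For part (b), commutativity, I would again reduce to the vertically semipositive case via (a). The archimedean contributions are then expressed as integrals of products of positive $(1,1)$-currents, i.e.\ as Bedford--Taylor / Monge--Amp\`ere type expressions, whose symmetry in the metrics is classical (see, e.g., \cite[Section 6]{Gubler1998} and the references therein). At non-archimedean places, commutativity of the local heights of vertically semipositive line bundles is recorded in Gubler's work \cite{Gubler1997,Gubler1998,Gubler2003}, where it is ultimately reduced to commutativity of algebraic intersection numbers on integral models by approximating each semipositive metric by model metrics. Since any permutation is a product of transpositions, it suffices to check the statement for a transposition of two adjacent entries, say positions $j$ and $j+1$, and there it is literally the statement recalled above. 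Combining the archimedean and non-archimedean parts via the decomposition of arithmetic intersection numbers into a sum of local contributions yields the claim.

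The only mildly technical point is bookkeeping the reduction of an arbitrary vertically integrable tuple to the semipositive case; this is purely formal once multilinearity is in hand, so I expect no genuine obstacle in this lemma.
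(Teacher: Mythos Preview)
Your sketch is correct in substance: the reduction to vertically semipositive line bundles via the defining decomposition, followed by invoking linearity and symmetry of Gubler's local heights (and the archimedean Monge--Amp\`ere contributions), is exactly the standard way to obtain these two properties, and there is no genuine obstacle in carrying it out.

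The paper, however, does none of this work here. Its entire proof is a single citation: both (a) and (b) are quoted verbatim as the first two assertions of \cite[Lemma~1.3]{Kuehne2018}, the author's own earlier paper where the arithmetic intersection pairing is set up in detail. So rather than re-deriving multilinearity and commutativity from Gubler's theory, the paper treats them as already-established black boxes. Your proposal is effectively an unpacking of what that cited lemma contains; it is more self-contained but also redundant given the reference. If you want to match the paper's level of detail, a one-line appeal to \cite[Lemma~1.3]{Kuehne2018} suffices.
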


\begin{proof} These are just the first two assertions from \cite[Lemma 2.3]{Kuehne2022}.
\end{proof}

We conclude with a further observation: Let $\nu \in \Sigma_\infty(K)$ be an archimedean prime and let $f \in \mathscr{C}^\infty(X^{\mathrm{an}}_{\IC_{\nu}})$. For all $i \in \{1,\dots, d+1\}$ and $$\Lotil_1=(\mathcal{L}_1, \{ \Vert \cdot \Vert_{1,\nu}\}), \ \dots,\ \Lotil_{d+1-i}=(\mathcal{L}_{d+1-i}, \{ \Vert \cdot \Vert_{d+1-i,\nu}\}) \in \widehat{\Pic}_{\mathscr{C}^\infty}(\mathcal{X}),$$ we have 
\begin{multline}
\label{equation::intersection_integral}
\Lotil_1 \cdot \Lotil_2 \cdots \Lotil_{d+1-i} \cdot \overline{\caO}_{\mathcal{X}}(f)^i \\
= \delta_{\nu} \int_{X^{\mathrm{an}}_{\IC_{\nu}}} f (dd^c f)^{i-1} \wedge c_1(\Vert \cdot \Vert_{1,\nu}) \wedge c_1(\Vert \cdot \Vert_{2,\nu}) \wedge \dots \wedge c_1(\Vert \cdot \Vert_{d+1-i,\nu})
\end{multline}
by \cite[Equation (2.3) in Subsection 2.6]{Kuehne2022}\footnote{Note that this equation contains a typo on its left-hand side, which should be read as $\Lotil_1 \cdot \Lotil_2 \cdots \Lotil_{d+1-i} \cdot \overline{\caO}_{\mathcal{X}}(f)^i$.}.

Lemma \ref{lemma::intersectionnumber} (a) allows to linearly extend the definition of the arithmetic intersection number to all elements of $\widehat{\Pic}_{\mathscr{C}^\infty}(\mathcal{X})_\IQ$. 

\subsection{Heights} 
\label{subsection::heights} 
Let $\overline{\mathcal{L}}=(\mathcal{L}, \{ \Vert \cdot \Vert_\nu \}_{\nu \in \Sigma_\infty(K)}) \in \widehat{\Pic}_{\mathscr{C}^\infty}(\mathcal{X})$ be a $\mathscr{C}^\infty$-hermitian line bundle on $\mathcal{X}$. Assume that the generic fiber $L = \mathcal{L}_K$ is ample. For each irreducible subvariety $Y \subset X$ of dimension $d^\prime$, its Zariski closure $\mathcal Y$ in $\mathcal{X}$ is a flat, irreducible $\mathcal{O}_K$-scheme of relative dimension $d^\prime$ (see e.g.\ \cite[Proposition 4.3.9 and Corollary 4.3.14]{Liu2002}). Using the arithmetic intersection numbers introduced in the last subsection, we define the height 
\begin{equation*}
h_{\Lotil}(Y) = \frac{(\Lotil|_{\overline{\mathcal Y}})^{d^\prime+1}}{[K:\IQ](d^\prime+1)\deg_{L}(Y)}.
\end{equation*}

We can make this definition more explicit if $Y$ is a closed point $x \in X$. Writing $\overline{x}$ for the closure of $x$ in $\mathcal{X}$, the above definition simplifies to
\begin{equation*}
h_{\Lotil}(x) = \frac{\Lotil|_{\overline{x}}}{[K(x):\IQ]}
\end{equation*}
in this case. For every non-zero rational section $\mathbf{s}$ of $\mathcal{L}$ such that $\overline{x} \cap \left\vert \Div(\mathbf{s}) \right\vert = \emptyset$, this equals
\begin{equation}
\label{equation::height_section}
h_{\Lotil}(x) = \frac{1}{[K(x):\IQ]} \left( \log \# (\mathcal{L}|_{\overline{x}}/(\mathbf{s}|_{\overline{x}})) - \sum_{\nu \in \Sigma_\infty(K)} \sum_{y \in \mathbf{O}_\nu(x)}
\delta_\nu \log \Vert \mathbf{s}(y) \Vert_{\nu} \right).
\end{equation}

\subsection{Arithmetic volumes and Minkowski's Theorem}
\label{section::arithmeticvolumes}
Let $$\Lotil = (\mathcal{L}, {\{ \Vert \cdot \Vert_\nu } \}_{\nu \in \Sigma_\infty(K)})$$ be a $\mathcal{C}^\infty$-hermitian line bundle on $\mathcal{X}$. For each integer $N\geq 0$, we consider the isomorphism $$V_{N,\IR}=H^0(\mathcal{X},\mathcal{L}^{\otimes N}) \otimes_\IZ \IR \longrightarrow \prod_{\nu \in \Sigma_\infty(K)} H^0(\mathcal{X}_{\IC_\nu},\mathcal{L}_{\IC_\nu}^{\otimes N}).$$
Indeed, this generalizes a well-known isomorphism from Minkowski's geometry of numbers (compare \cite[Section I.5]{Neukirch1999}).\footnote{Note that we only have one factor for each complex archimedean place on the right-hand side here, so there is no action via complex conjugation in contrast to \cite{Neukirch1999}.} 
For each $\nu \in \Sigma_\infty(K)$, we can additionally endow $H^0(\mathcal{X}_{\IC_\nu},\mathcal{L}_{\IC_\nu}^{\otimes N})$ with a sup-norm by setting
\begin{equation*}
\Vert \mathbf{s} \Vert_{\nu}^{(\infty)} = \max_{x \in X_{\IC_\nu}^{\mathrm{an}}} \{ \Vert \mathbf{s}(x) \Vert_\nu^{\otimes N} \}
\end{equation*}
for every $\mathbf{s} \in H^{0}(\mathcal{X},\mathcal{L}^{\otimes N})$.
By restriction, we obtain a seminorm\footnote{This is indeed only a seminorm as the following example, suggested by the referee shows. If $K=\IQ(\sqrt{2})$, $\mathcal{X} = \caO_K$, $\mathcal{L}= \caO_{\mathcal{X}}$, then $\Vert \sqrt{2} \otimes 1 - 1 \otimes \sqrt{2} \Vert_{\nu}^\infty = \Vert 0 \Vert_\nu^{\infty} = 0$ for one archimedean place $\nu \in \Sigma_\infty(K)$.} on $V_{N,\IR}$. With these seminorms, we can define the unit ball
\begin{equation*}
B_N = \{ \mathbf{s} \in V_{N,\IR} \ | \ \forall \nu \in \Sigma_\infty(K):\Vert\mathbf{s}\Vert_{\nu}^{(\infty)} \leq 1 \}.
\end{equation*}
We let $\mathrm{vol}_N(\cdot)$ be the unique Haar measure on $V_{N,\IR}$ such that the induced quotient measure on $V_{N,\IR}/V_{N,\IZ}$ has total mass $1$. The arithmetic volume of $\Lotil \in \widehat{\Pic}_{\mathscr{C}^\infty}(\mathcal{X})$ is defined by
\begin{equation*}
\widehat{\vol}_{\chi}(\Lotil) = \limsup_{N \rightarrow \infty} \frac{\log \vol_N(B_N)}{N^{d+1}/(d+1)!}.
\end{equation*}
By \cite[Corollary 3.2.2]{Ikoma2013}, we have $\widehat{\vol}_{\chi}(\Lotil)<\infty$.

Arithmetic volumes enter our proof of Theorem \ref{theorem:equidistribution} through the construction of global sections with small supremum norms via Minkowski's Theorem. We summarize this in the following lemma.

\begin{lemma} 
	\label{lemma::minkowski}
	Let $\nu \in \Sigma_\infty(K)$ and a real $\varepsilon>0$ be given. Assume that the generic fiber $L = \mathcal{L}_K$ is nef and big. Then, there exists an integer $N_0$ and a non-zero section $\mathbf{s} \in H^0(\mathcal{X}, \mathcal{L}^{\otimes N_0})$ such that
	\begin{equation*}
	\delta_\nu \log \Vert \mathbf{s} \Vert_{\nu}^{(\infty)} \leq \left( - \frac{\volh_\chi(\Lotil)}{(d+1)L^d} + \varepsilon\right) N_0
	\end{equation*}
	and
	\begin{equation*}
	\log \Vert \mathbf{s} \Vert_\mu^{(\infty)}  \leq 0
	\end{equation*}
	for all other $\mu \in \Sigma_\infty(K) \setminus \{\nu \}$.
\end{lemma}

\begin{proof}
	This is a well-known consequence of Minkowski's Second Theorem \cite[Theorem C.2.11]{Bombieri2006}. See e.g.\ \cite[Lemma 2.7]{Kuehne2022} for a deduction in the slightly more general setting of adelically metrized line bundles.
\end{proof}

\section{The Equilibrium measure}
\label{section:equilibrium}

In this section, we describe -- up to a proportionality factor appearing in Lemma \ref{lemma::proportionality_constant} below -- the equilibrium measure $\mu_\nu$ postulated in Theorem \ref{theorem:equidistribution}, establishing also some essential lemmas for its proof in the next section. In contrast to the rest of the article, we work here with a smooth, irreducible \textit{complex} variety $S$, a family $\pi: A \rightarrow S$ of \textit{complex} abelian varieties of relative dimension $g$, an irreducible subvariety $X \subseteq A$ of dimension $d$ such that $\pi(X)=S$, and an immersion $\iota: A \hookrightarrow \IP^N_\IC$. Notationally, we identify the complex-analytic spaces associated with complex algebraic varieties with their sets of complex-valued points.

\textit{We assume throughout this section that the line bundle $\iota^\ast \caO(1)$ is fiberwise symmetric (i.e., $\iota^\ast \caO(1)|_s$ is a symmetric line bundle on $A|_s$ for every $s \in S(\IC)$). In addition, we let 
\begin{equation*}
	\boldsymbol{\Delta}=(\delta_1,\delta_2,\dots,\delta_g)\in \IZ^g, \ \delta_i \mid \delta_{i+1} \ (1 \leq i < g),
\end{equation*}	
denote the type of the polarization $\lambda: A \rightarrow A^\vee$ induced by $\iota^\ast \caO(1)$.}

\textit{All constants, whether implicit or explicit, are allowed to depend on the data introduced so far without further mention.}

Let $\omega_{\mathrm{FS}}$ denote the Fubini-Study form on $\IP^N(\IC)$. For each integer $k\geq 0$, we consider the $(1,1)$-form
\begin{equation*}
\alpha_k = \frac{(\iota \circ [n^k])^\ast \omega_{\mathrm{FS}}}{n^{2k}}
\end{equation*}
on $A(\IC)$. Note that $\alpha_{k+l} = ([n^l]^\ast \alpha_k) / n^{2l}$ for all integers $k,l \geq 0$.

\begin{lemma}
\label{lemma:equilibrium}
There exists a semipositive, smooth, closed $(1,1)$-form $\beta$ on $A(\IC)$ such that, for every irreducible subvariety $X \subseteq A$ of dimension $d$ and every compactly supported continuous function $f:X(\IC) \rightarrow \IR$, we have
\begin{equation}
\label{equation:equilibrium_lemma}
\lim_{k \rightarrow \infty}\int_{X(\IC)} f \alpha^{\wedge d}_k = \int_{X(\IC)} f \beta^{\wedge d}.
\end{equation}
Furthermore, $[n]^\ast \beta = n^2 \cdot \beta$.
\end{lemma}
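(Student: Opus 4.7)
The plan is to construct $\beta$ as the Chern form of a canonical smooth hermitian metric on $\iota^\ast \caO(1)$ whose restriction to each fiber $A_s$ is translation-invariant, and then to deduce the weak convergence of $\alpha_k^{\wedge d}$ to $\beta^{\wedge d}$ by writing $\alpha_k - \beta = dd^c v_k$ for smooth functions $v_k$ converging uniformly to $0$ on compact subsets.

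\textbf{Step 1 (Construction of $\beta$).} Since $\iota^\ast \caO(1)$ is fiberwise symmetric of type $\boldsymbol{\Delta}$, each fiber $A_s$ admits a canonical translation-invariant hermitian metric on $\iota^\ast\caO(1)|_{A_s}$ (unique up to a positive scalar), whose Chern form $\beta_s$ is the semipositive translation-invariant representative of $c_1(\iota^\ast\caO(1)|_{A_s})$ associated with the Riemann form of the polarization. These metrics can be assembled into a global smooth hermitian metric $\rho$ on $\iota^\ast \caO(1)$; this is well-known in the theory of families of polarized abelian varieties (compare the biextension/cubical structure constructed in \cite[Section 8.7]{Birkenhake2004}), and can also be verified directly in a uniformizing local trivialization $A|_U \cong \IC^g \times U/\Lambda(s)$ over a simply connected $U \subseteq S^{\mathrm{an}}$, where the relevant Hermitian form depends smoothly on $\tau_s \in \Hg$. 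Set $\beta := c_1(\iota^\ast\caO(1),\rho)$; this is automatically a smooth closed $(1,1)$-form on $A(\IC)$, and semipositive because each $\beta_s$ is. The identity $[n]^\ast \beta_s = n^2 \beta_s$, valid for any translation-invariant representative of $c_1(L_s)$ on an abelian variety, extends to the global identity $[n]^\ast \beta = n^2 \beta$.

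\textbf{Step 2 (Uniform smallness of $v_k$).} The ratio $\rho/\Vert\cdot\Vert_{\mathrm{FS}}$ is a smooth positive function $e^{-u}$ on $A(\IC)$, so $\alpha = \beta + dd^c u$. Combining with $[n^k]^\ast \beta = n^{2k}\beta$ yields
\[
\alpha_k = \frac{[n^k]^\ast \alpha}{n^{2k}} = \beta + dd^c v_k, \qquad v_k := \frac{u \circ [n^k]}{n^{2k}}.
\]
For any compact $K \subseteq A(\IC)$, properness of $\pi$ makes $\pi^{-1}(\pi(K))$ compact, so $u$ is bounded on it and $\Vert v_k\Vert_{L^\infty(K)} \leq \Vert u\Vert_{L^\infty(\pi^{-1}(\pi(K)))}/n^{2k} \to 0$.

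\textbf{Step 3 (Weak convergence of wedge powers).} The elementary telescoping identity
\[
\alpha_k^{\wedge d} - \beta^{\wedge d} = dd^c v_k \wedge T_k, \qquad T_k := \sum_{j=0}^{d-1} \alpha_k^{\wedge j} \wedge \beta^{\wedge(d-1-j)},
\]
exhibits $T_k$ as a smooth, closed, semipositive $(d-1,d-1)$-form. For smooth $f$ with compact support in $X^{\mathrm{sm}}(\IC)$, Stokes' theorem gives
\[
\int_{X(\IC)} f (\alpha_k^{\wedge d} - \beta^{\wedge d}) = \int_{X(\IC)} v_k \cdot dd^c f \wedge T_k.
\]
Iterating the substitution $\alpha_k = \beta + dd^c v_k$ inside $T_k$ and performing further integrations by parts, every occurrence of $dd^c v_k$ is absorbed by the uniform bound on $v_k$, so that $\int_{\mathrm{supp}(f)} |dd^c f| \wedge T_k$ is uniformly bounded in $k$. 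Combined with Step 2 this forces the right-hand side to $0$. Passage to general continuous $f$ with compact support is then routine once one knows that $\int_{\mathrm{supp}(f)} \alpha_k^{\wedge d}$ is uniformly bounded in $k$.

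\textbf{Main obstacle.} The principal subtlety is the global smoothness of $\beta$ (equivalently, of $u$): fiberwise translation-invariant Chern forms do not a priori assemble into a globally closed smooth form without a coherent choice of scalar in each fiber, and existence of such a choice rests on the structural properties of families of polarized abelian varieties—this is where the symmetry and fixed type $\boldsymbol{\Delta}$ hypotheses are essential. The uniform mass bound for $T_k$ is a secondary point that is formal but requires careful iteration.
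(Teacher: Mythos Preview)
Your proof is correct and rests on the same key ingredient as the paper's---the explicit canonical metric on the universal family $\mathcal{A}_{g,\boldsymbol{\Delta}}\to\mathcal{H}_g$ from \cite[Section 8.7]{Birkenhake2004}---but you invert the order of the argument. The paper first constructs $\beta$ as a \emph{limit}: it uses the theorem of the cube to get local isomorphisms $L^{\otimes n^2}\approx [n]^\ast L$, pulls back the Fubini--Study metric to obtain metrics $\Vert\cdot\Vert_k$, shows these converge uniformly on compacta by Zhang's telescoping argument, and then invokes \cite[Corollary 1.6]{Demailly1993} to obtain the weak convergence \eqref{equation:equilibrium_lemma} of Monge--Amp\`ere measures directly; only afterward does it verify that the limiting current is a smooth form, by identifying it with the pullback of the explicit metric $\Vert f\Vert_0 = e^{-\pi\,\mathrm{Im}(\boldsymbol{z})^t\,\mathrm{Im}(\boldsymbol{\tau})^{-1}\,\mathrm{Im}(\boldsymbol{z})}|f|$. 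You instead postulate the smooth metric $\rho$ up front and deduce convergence by hand via integration by parts.

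Two points where your version is thinner than the paper's: first, the global identity $[n]^\ast\beta=n^2\beta$ does \emph{not} follow from the fiberwise identity alone (the difference could a priori have horizontal components), and is exactly what the paper's explicit formula establishes by exhibiting a global hermitian isomorphism $(L_{g,\boldsymbol{\Delta}},\Vert\cdot\Vert_{g,\boldsymbol{\Delta}})^{\otimes n^2}\approx [n]^\ast(L_{g,\boldsymbol{\Delta}},\Vert\cdot\Vert_{g,\boldsymbol{\Delta}})$; you should invoke this rather than assert the extension. Second, your Step~3 iteration is essentially a reproof of the Bedford--Taylor/Demailly continuity theorem, and the passage from smooth $f$ supported in $X^{\mathrm{sm}}(\IC)$ to arbitrary continuous compactly supported $f$ on $X(\IC)$ is left vague; the paper's direct appeal to \cite[Corollary 1.6]{Demailly1993} is cleaner and handles this in one stroke.
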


In the next section, we prove that, for a non-degenerate subvariety $X$, the measure $\beta^{\wedge d}$ on $X(\IC)$ is indeed proportional to the equilibrium measure $\mu_\nu$ in Theorem \ref{theorem:equidistribution}.

\begin{proof}
To ease notation, we write $L$ instead of $\iota^\ast \caO(1)$ and $\Vert \cdot \Vert: L(\IC) \rightarrow \IR^{\geq 0}$ for the hermitian metric induced by the Fubini-Study metric on $\caO(1)$. 

By the theorem of the cube (\cite[Corollary 3 on p.\ 59]{Mumford1970}), we have $L|_{\pi^{-1}(s)}^{\otimes n^2} \approx [n]^\ast L|_{\pi^{-1}(s)}$ for every point $s \in S(\IC)$. A sufficiently general version of the seesaw theorem (\cite[Theorem 24.66]{Goertz2023}) 
implies that $L^{\otimes n^2} \otimes [n]^\ast L^{\otimes -1}$ is the pullback of a line bundle on $S$. Hence, every point $s_0 \in S(\IC)$ has a Zariski-open neighborhood $U \subseteq S$ such that there exists an isomorphism $\psi_1 : L|_{\pi^{-1}(U)}^{\otimes n^2} \rightarrow [n]^\ast L|_{\pi^{-1}(U)}$. For each integer $k \geq 0$, this induces further an isomorphism $\psi_k: L|_{\pi^{-1}(U)}^{\otimes n^{2k}} \rightarrow [n^k]^\ast L|_{\pi^{-1}(U)}$. We obtain a smooth hermitian metric $\Vert \cdot \Vert_k: L(\IC)|_{\pi^{-1}(U)} \rightarrow \IR^{\geq 0}$ by demanding 
\begin{equation}
\label{equation::homogen}
\Vert \mathbf{s}\Vert_k^{n^{2k}} = [n^k]^\ast \Vert \psi_k \circ \mathbf{s}^{\otimes n^{2k}} \Vert
\end{equation}
for every meromorphic section $\mathbf{s}$ of $L(\IC)|_{\pi^{-1}(U)}$ where
\begin{equation*}
	[n^k]^\ast \Vert \cdot \Vert: [n^k]^\ast L(\IC) \longrightarrow \IR^{\geq 0}
\end{equation*}
denotes the pullback of the metric $\Vert \cdot \Vert$ on $L(\IC)$ along $[n^k]$. Let $K \subseteq U(\IC)$ be a compact neighborhood of $s_0$. The function $\Vert \mathbf{s} \Vert_2 / \Vert \mathbf{s} \Vert_1: \pi^{-1}(K) \rightarrow \IR^{> 0}$ is independent of the chosen meromorphic section $\mathbf{s}$. Since it is continuous, it is uniformly bounded. A version of Tate's limiting argument (compare the proof of \cite[Theorem 2.2]{Zhang1995a}) shows that there exists a (unique) hermitian metric $\Vert \cdot \Vert_\infty$ on $L(\IC)|_{\pi^{-1}(K)}$ such that $\Vert \mathbf{s} \Vert_k/\Vert \mathbf{s} \Vert_\infty \rightarrow 1$ uniformly on $\pi^{-1}(K)$ as $k \rightarrow \infty$.

Let $V \subset A(\IC)$ be an open set such that $\overline{V} \subset \pi^{-1}(K)$. By choosing $V$ small enough, we can assure that there exists a holomorphic section $\mathbf{s}$ of $L(\IC)$ over $V$ such that $(- \log \Vert \mathbf{s} \Vert_k)$ is uniformly bounded on $V$ for all $k \geq 1$. Then, we have
\begin{equation*}
dd^c (-\log \Vert \mathbf{s} \Vert_k) = n^{-2k} dd^c (-\log ([n^k]^\ast \Vert \psi_k(\mathbf{s}^{\otimes n^{2k}})\Vert)) = \alpha_k|_V.
\end{equation*}
Set $\beta|_V = dd^c (- \log \Vert \mathbf{s} \Vert_\infty)$. The uniform convergence $\Vert \mathbf{s} \Vert_k \rightarrow \Vert \mathbf{s} \Vert_\infty$ implies by \cite[Corollary 1.6]{Demailly1993}\footnote{This statement also requires that $\log (- \Vert \mathbf{s} \Vert_{\infty})$ is plurisubharmonic. However, this is automatic by \cite[Theorem 2.9.14 (iii)]{Klimek1991}. 
Furthermore, the cited result is stated only for complex manifolds so we have to apply it for $V$ instead of $X(\IC)$. By \cite{Lelong1957}, integration along $V \cap X(\IC)$ defines a closed positive current $T$ on $V$. Therefore, \cite[Corollary 1.6]{Demailly1993} implies the weak convergence
\begin{equation*}
	dd^c (- \log \Vert \mathbf{s} \Vert_k)^{\wedge d} \wedge T \longrightarrow dd^c (- \log \Vert \mathbf{s} \Vert_\infty)^{\wedge d} \wedge T,
\end{equation*}
which yields \eqref{equation::integral_limits}.} that, for any continuous function $f: X(\IC) \rightarrow \IR$ with compact support in $V$, we have
\begin{equation}
\label{equation::integral_limits}
\int_{X(\IC)} f  \alpha_k^{\wedge d} \longrightarrow \int_{X(\IC)} f (\beta|_V)^{\wedge d}, \ k \rightarrow \infty.
\end{equation}

We claim that the currents $\beta|_V$ constructed in this 
way for varying $V$ glue to a $(1,1)$-current $\beta$ on $A$. For this purpose, it suffices to show that a different choice of $\psi_1$ gives rise to the same current $\beta|_V$. In fact, any other choice is of the form $\phi \cdot \psi_1$ where $\phi$ is a non-zero holomorphic function on $\pi^{-1}(U)$. By compactness, the function $\phi$ has to be constant on any fiber of $\pi$. The argument from \cite{Zhang1995a} shows that the quotient of the two respective limit metrics is $|\phi|^{1/n^2}$. Thus, the uniqueness of $\beta|_V$ follows from the calculation 
\begin{equation*}
dd^c \log |\phi|^{1/n^2} = \frac{1}{2n^2} \cdot \left( dd^c \log \phi + dd^c \log\overline{\phi}\right) = 0. 
\end{equation*}
A straightforward partition of unity argument allows to deduce \eqref{equation:equilibrium_lemma}. Furthermore, the homogeneity relation $[n]^\ast \beta = n^2 \cdot \beta$ follows from \eqref{equation::homogen}.

It remains to verify that $\beta$ is a smooth differential form. Each point $s_0 \in S(\IC)$ has a simply connected open neighborhood $U \subseteq S(\IC)$ such that there exists a holomorphic map $\mathcal{c}: U \rightarrow \mathcal{H}_g$ for which $A(\IC)|_U$ is the pullback of the family $\pi_{g,\boldsymbol{\Delta}}: \mathcal{A}_{g,\boldsymbol{\Delta}} \rightarrow \mathcal{H}_g$. Write $\varphi: A(\IC)|_U \rightarrow \mathcal{A}_{g,\boldsymbol{\Delta}}$ for the associated map. Recall that $L_{g,\boldsymbol{\Delta}}$ is the line bundle on $\mathcal{A}_{g,\boldsymbol{\Delta}}$ constructed in \cite[Section 8.7]{Birkenhake2004}. By \cite[Theorem 2.2.3]{Birkenhake2004}, the restrictions of $\varphi^\ast L_{g,\boldsymbol{\Delta}}^{\otimes 2}$ and $L(\IC)^{\otimes 2}$ are isomorphic as the associated Appell-Humbert data agree \cite[Corollary 2.3.7 and Lemma 8.7.1]{Birkenhake2004}. As above, we can shrink $U$ to assume that $\varphi^\ast L_{g,\boldsymbol{\Delta}}^{\otimes 2} \approx L(\IC)^{\otimes 2}|_U$. It is hence sufficient to show that $L_{g,\boldsymbol{\Delta}}$ can be endowed with a smooth hermitian metric $\Vert \cdot \Vert_{g,\boldsymbol{\Delta}}$ such that 
\begin{equation}
\label{equation::hermitian_isom}
(L_{g,\boldsymbol{\Delta}}, \Vert \cdot \Vert_{g,\boldsymbol{\Delta}})^{\otimes n^2} \approx [n]^\ast (L_{g,\boldsymbol{\Delta}}, \Vert \cdot \Vert_{g,\boldsymbol{\Delta}})
\end{equation}
as hermitian line bundles. In fact, this relation characterizes $\Vert \cdot \Vert_{g,\boldsymbol{\Delta}}$ up to a positive constant by \cite[Theorem 2.2 (b)]{Zhang1995a}, so that it produces the same Chern form as the metric $\Vert \cdot \Vert_\infty$ constructed above. Endow the trivial line bundle $\mathcal{O}_{\IC^g \times \mathcal{H}_g}$ with the smooth hermitian metric $\Vert \cdot \Vert_0$ given by setting
\begin{equation*}
\Vert f \Vert_0 = e^{-\pi \mathrm{Im}(\boldsymbol{z})^t \mathrm{Im}(\boldsymbol{\tau})^{-1} \mathrm{Im}(\boldsymbol{z})}|f|
\end{equation*}
for every function $f$ defined on an arbitrary open of $\IC^g \times \mathcal{H}_g$. From the explicit automorphy factors describing $L_{g,\boldsymbol{\Delta}}$ (see \cite[Section 8.7]{Birkenhake2004}), it is easy to read off that $\Vert \cdot \Vert_0$ descends to a metric $\Vert \cdot \Vert_{g,\boldsymbol{\Delta}}$ on $L_{g,\boldsymbol{\Delta}}$. Furthermore, the isomorphism $(\mathcal{O}_{\IC^g \times \mathcal{H}_g}, \Vert \cdot \Vert_0)^{\otimes n^2} = [n]^\ast (\mathcal{O}_{\IC^g \times \mathcal{H}_g},\Vert \cdot \Vert_0)$ descends to \eqref{equation::hermitian_isom}. 
\end{proof}


The next lemma gives an alternative characterization of the degeneracy locus of $X$ in terms of the $(1,1)$-form $\beta$ just constructed. For this purpose, we recall the notion of Betti rank for smooth points $x \in X^{\mathrm{sm}}(\IC)$ of an irreducible subvariety $X \subseteq A$: Choose an open subset $\pi(x)\in U \subseteq S(\IC)$ such that there exists a real-analytic isomorphism
\begin{equation}
\label{equation::real_analytic_trivialization}
a: A(\IC)|_U \longrightarrow (\IR/\IZ)^{2g} \times U
\end{equation}
that restricts to a group homomorphism on each fiber over $U$ and set 
\begin{equation}
	\label{equation::real_analytic_trivialization2}
	b = \pr_1 \circ a: A(\IC)|_U \longrightarrow (\IR/\IZ)^{2g}.
\end{equation}
We define the Betti rank of $X$ at $x$ as the rank of the real-analytic map $b$ at $x$, to wit
\begin{equation*}
	\mathrm{rank}_{\mathrm{Betti}}(X,x) = \dim_\IR(db(T_{\IR,x}X(\IC))).
\end{equation*}
It is easy to see that $\mathrm{rank}_{\mathrm{Betti}}(X,x)$ depends neither on the choice of $U$ nor $a$.

\begin{lemma}
	\label{lemma::finallemma}
	For each $x \in X^{\mathrm{sm}}(\IC)$, we have $\mathrm{rank}_{\mathrm{Betti}}(X,x) = 2 \dim(X)$ if and only if $(\beta|_X^{\wedge \dim(X)})_x \neq 0$.
\end{lemma}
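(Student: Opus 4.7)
The plan is to exploit the scaling $[n]^\ast \beta = n^2 \beta$ to put $\beta$ into a normal form on a Betti-trivializing neighborhood of $x$, after which $\beta|_x$ is exhibited as the pullback of a positive $(1,1)$-form via the Betti map $b$; the non-vanishing criterion for $(\beta|_X)^{\wedge d}|_x$ then reduces to the injectivity of $db$ on $T_{\IR,x}X(\IC)$.

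Choose an open $U \subseteq S(\IC)$ containing $\pi(x)$ as in the proof of Lemma \ref{lemma::fs_bound}, with Betti coordinates $u_1, \ldots, u_{2g}$ pulled back via $b$ and base real-analytic coordinates $u'_1, \ldots, u'_{2\dim(S)}$ on $U$. Expanding $\beta|_U$ into the three types of components $du_i \wedge du_j$, $du_i \wedge du'_k$, and $du'_k \wedge du'_l$, the identity $[n]^\ast \beta = n^2 \beta$ together with $[n]^\ast du_i = n\, du_i$, $[n]^\ast du'_k = du'_k$, and $[n]^\ast f(u, u') = f(nu, u')$ forces the coefficients of the mixed and base-only terms to satisfy $h(nu, u') = n\, h(u, u')$ and $r(nu, u') = n^2\, r(u, u')$ for all positive integers $n$; continuity on the compact torus $(\IR/\IZ)^{2g}$ then forces $h \equiv r \equiv 0$. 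The translation-invariance of the curvature of the canonical metric on the fiberwise symmetric ample line bundle $\iota^\ast \caO(1)|_{A_s}$ implies that the remaining coefficients $g_{ij}$ depend only on $u'$, so that
\begin{equation*}
\beta|_U = \sum_{1 \leq i < j \leq 2g} g_{ij}(u')\, du_i \wedge du_j.
\end{equation*}

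This identity reads $\beta|_x = (db|_{T_x A})^\ast \beta_{\pi(x)}$, where $\beta_s := \sum_{i<j} g_{ij}(s)\, du_i \wedge du_j$ is a real 2-form on $T_{b(x)}\IR^{2g}$. Transferring the complex structure from $T_x A_{\pi(x)}$ to $T_{b(x)}\IR^{2g}$ via the real-linear isomorphism $db|_{T_x A_{\pi(x)}}$ yields a complex structure $J_{\pi(x)}$ with respect to which $\beta_{\pi(x)}$ is a positive, hence nondegenerate, $(1,1)$-form (since $\beta|_{A_{\pi(x)}}$ is the Chern form of the canonical metric on the ample line bundle $\iota^\ast \caO(1)|_{A_{\pi(x)}}$). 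Because $\beta|_x$ is itself a $(1,1)$-form (Lemma \ref{lemma:equilibrium}) and the $2$-form kernel of any $(1,1)$-form is automatically $J_A$-invariant, and because this kernel equals $H := \ker(db|_{T_x A})$ by the nondegeneracy of $\beta_{\pi(x)}$, the subspace $H$ is a complex subspace of $T_x A$ of complex dimension $\dim(S)$. The decomposition $T_x A = T_x A_{\pi(x)} \oplus H$ is then a direct sum of complex subspaces, and $db|_{T_x A}$, being the composition of the complex-linear projection $T_x A \to T_x A_{\pi(x)}$ with the complex-linear isomorphism $db|_{T_x A_{\pi(x)}}: T_x A_{\pi(x)} \to (T_{b(x)}\IR^{2g}, J_{\pi(x)})$, is complex-linear.

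Set $V := T_{\IR, x} X(\IC)$, a complex subspace of $T_x A$ of complex dimension $d$. We have
\begin{equation*}
(\beta|_X)^{\wedge d}|_x = (db|_V)^\ast \beta_{\pi(x)}^{\wedge d}.
\end{equation*}
If $\mathrm{rank}_{\mathrm{Betti}}(X, x) < 2d$, the image $db(V)$ has real dimension less than $2d$ and any $2d$-form restricted to it vanishes; conversely, if $\mathrm{rank}_{\mathrm{Betti}}(X, x) = 2d$, then $db|_V$ is injective and, by the complex-linearity of $db$, its image is a complex subspace of $(T_{b(x)}\IR^{2g}, J_{\pi(x)})$ of complex dimension $d$, on which $\beta_{\pi(x)}$ restricts to a Kähler form whose top wedge power is a positive volume form. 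The main obstacle is the complex-linearity of $db$, which makes the \emph{a priori} only real-analytic Betti map behave like a complex-linear map at the infinitesimal level; it is derived from the $(1,1)$-nature of $\beta$ combined with the nondegeneracy of $\beta_{\pi(x)}$, despite the underlying trivialization $a$ being merely real-analytic.
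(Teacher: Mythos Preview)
Your argument is correct and takes a genuinely different route from the paper's proof. The paper works with the explicit uniformization $\IC^g\times\mathcal{H}_g$ and the closed formula for $\beta$ coming from the metric $e^{-\pi\,\mathrm{Im}(\boldsymbol z)^t\mathrm{Im}(\boldsymbol\tau)^{-1}\mathrm{Im}(\boldsymbol z)}$; it then introduces coordinate charts on $X$, forms the Jacobian matrix $\boldsymbol J$ of $\beta$ in these coordinates, diagonalizes, and shows $(\beta|_X^{\wedge d})_x\neq 0\Leftrightarrow\mathrm{rank}\,\boldsymbol J(0)=d$. A separate matrix identity relating $\boldsymbol J$ to the Jacobian $\mathbf B$ of the Betti map then gives $\mathrm{rank}_{\mathrm{Betti}}(X,x)=2\,\mathrm{rank}\,\boldsymbol J(0)$. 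Your proof replaces all of this with two intrinsic observations: first, the scaling relation $[n]^\ast\beta=n^2\beta$ together with fiberwise translation-invariance forces $\beta$ to be a pullback $\beta|_x=(db)^\ast\beta_{\pi(x)}$ of a nondegenerate form on $\IR^{2g}$; second, since the radical of the real $(1,1)$-form $\beta|_x$ coincides with $\ker(db|_{T_xA})$, that kernel is automatically $J_A$-invariant, whence $db$ is $\IC$-linear for the transferred complex structure on $\IR^{2g}$. The equivalence then drops out of elementary K\"ahler linear algebra on a $d$-dimensional complex subspace.

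What each approach buys: the paper's computation is self-contained and yields the explicit identity $\mathrm{rank}_{\mathrm{Betti}}(X,x)=2\,\mathrm{rank}\,\boldsymbol J(0)$, which could in principle feed into more quantitative statements. Your argument is shorter, coordinate-free, and isolates the conceptual reason the lemma holds: the differential of the real-analytic Betti map is in fact complex-linear at every point, a fact that is not visible in the paper's matrix manipulation but is exactly what makes the Betti rank an even number and ties it to the top power of $\beta$. One minor remark: your claim that the $g_{ij}$ depend only on $u'$ already follows from the scaling relation $g_{ij}(nu,u')=g_{ij}(u,u')$ together with continuity on the torus, so the separate appeal to translation-invariance, while correct, is not strictly needed.
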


This lemma renders the statement of \cite[Proposition 2.7]{Dimitrov2021a} more precise, which is crucial for some of our arguments below.

\begin{proof}
	Let $U \subseteq S(\IC)$ be an open subset containing $\pi(x)$ such that we have an isomorphism as in \eqref{equation::real_analytic_trivialization}. There exists an open subset $V \subset A(\IC)$ containing $x$ and a holomorphic map $\mathcal{l}: V \rightarrow \IC^g \times \mathcal{H}_g$ such that $2\cdot \beta|_{V} = \mathcal{l}^\ast \omega$ where $\omega$ is the $(1,1)$-form
	\begin{equation*}
		\omega = i \left(d\boldsymbol{z} - d\boldsymbol{\tau} \cdot \mathrm{Im}(\boldsymbol{\tau})^{-1} \cdot \mathrm{Im}(\boldsymbol{z}) \right)^t \wedge \mathrm{Im}(\boldsymbol{\tau})^{-1} \left(d\overline{\boldsymbol{z}} - d\overline{\boldsymbol{\tau}} \cdot \mathrm{Im}(\boldsymbol{\tau})^{-1} \cdot \mathrm{Im}(\boldsymbol{z}) \right) 
	\end{equation*}
	and $\boldsymbol{z}=(z_1,\dots,z_g)^t$ and $\boldsymbol{\tau} = (\tau_{ij})_{1\leq i,j \leq g}$ are the standard coordinates on $\IC^g \times \mathcal{H}_g$ (see \cite[Lemma 2.3]{Dimitrov2021a} and the proof of Lemma \ref{lemma:equilibrium} above). We can additionally arrange $\mathcal{l}$ such that the restriction $b|_V: A(\IC)|_V \rightarrow (\IR/\IZ)^{2g}$ from \eqref{equation::real_analytic_trivialization2} lifts to a map $\boldsymbol{b} = (b_1,\dots,b_{2g})^t: A(\IC)|_V \rightarrow \IR^{2g}$ such that 
	\begin{equation}
		\label{equation::finallemma_1}
		\boldsymbol{z} = (\boldsymbol{\tau},\mathrm{diag}(\boldsymbol{\Delta})) \cdot \boldsymbol{b}, \ \mathrm{diag}(\boldsymbol{\Delta}) = 
		\begin{pmatrix} 
			\delta_1 &  &  \\
			& \ddots & \\
			&  & \delta_g
		\end{pmatrix},
	\end{equation}
	(compare \cite[Section 8.1]{Birkenhake2004} and \cite[Proposition B.2]{Dimitrov2021a}). We use $\boldsymbol{z}$ and $\boldsymbol{\tau}$ also for the respective functions induced on $A(\IC)|_V$ through pullback along $\mathcal{l}$, abusing notation slightly. Taking the imaginary part of \eqref{equation::finallemma_1}, we obtain
	\begin{equation*}
		\mathrm{Im}(z_j) = \sum_{k=1}^{g} \mathrm{Im}(\tau_{jk}) b_{k}.
	\end{equation*}
	In terms of matrices,
	\begin{equation*}
		\begin{pmatrix}
			b_{1} \\
			\cdots \\
			b_{g}
		\end{pmatrix}
		=
		\mathrm{Im}(\boldsymbol{\tau})^{-1} \cdot \mathrm{Im}(\boldsymbol{z}).
	\end{equation*}
	We infer that
	\begin{equation*}
		2 \cdot \beta|_{V} = i \left(d\boldsymbol{z} - d\boldsymbol{\tau} \begin{pmatrix}
			b_{1} \\
			\cdots \\
			b_{g}
		\end{pmatrix} \right)^t \wedge \mathrm{Im}(\boldsymbol{\tau})^{-1} \left(d\overline{\boldsymbol{z}} - d\overline{\boldsymbol{\tau}} \begin{pmatrix}
			b_{1} \\
			\cdots \\
			b_{g}
		\end{pmatrix} \right). 
	\end{equation*}
	Choose a $\IC$-analytic chart
	\begin{equation*}
		\chi: B_1(0)^d = \{ (w_1,\dots,w_d) \in \IC^d \mid \max\{|w_1|,\dots,|w_d|\} < 1 \} \longrightarrow X^{\mathrm{sm}}(\IC) \cap V
	\end{equation*}	
	such that $\chi(0)=x$. For every function $f$ on $X(\IC)$, we simply write $f$ (resp.\ $\partial f/ \partial w_l$, $\partial f/ \partial \overline{w}_l$) instead of $f \circ \chi$ (resp.\ $\partial (f \circ \chi)/\partial w_l$, $\partial (f \circ \chi)/\partial \overline{w}_l$). Consider the $(g\times d)$-matrix
	\begin{equation*}
		\boldsymbol{J} = \left(\frac{\del z_j}{\del w_l} - \sum_{k=1}^{g} \frac{\del \tau_{jk}}{\del w_l} \cdot b_{k} \right)_{\substack{1\leq j \leq g \\ 1 \leq l \leq d}}.
	\end{equation*}
	of (complex-valued) real-analytic functions on $B_1(0)^d$. It is easy to compute that
	\begin{equation*}
		2 \cdot \chi^\ast \beta = i (\boldsymbol{J} \cdot d\boldsymbol{w})^t \wedge (\mathrm{Im}(\boldsymbol{\tau})^{-1} \overline{\boldsymbol{J}} \cdot d\overline{\boldsymbol{w}})
		= i (d\boldsymbol{w})^t \wedge (\boldsymbol{J}^t \mathrm{Im}(\boldsymbol{\tau})^{-1}\overline{\boldsymbol{J}} \cdot d\overline{\boldsymbol{w}}).
	\end{equation*}
	As $(\boldsymbol{J}^t \mathrm{Im}(\boldsymbol{\tau})^{-1} \overline{\boldsymbol{J}})(0) \in \IC^{d \times d}$ is a semipositive Hermitian matrix, there exists a unitary matrix $\boldsymbol{U} \in \IC^{d \times d}$ such that $\boldsymbol{J}^t \cdot \mathrm{Im}(\boldsymbol{\tau})^{-1} \cdot \overline{\boldsymbol{J}}= \overline{\boldsymbol{U}} \cdot \boldsymbol{D} \cdot \boldsymbol{U}^t$ with $\boldsymbol{D} \in \IR^{d \times d}$ a diagonal matrix having real entries 
	\begin{equation*}
		d_1 \geq d_2 \geq \cdots \geq d_r > d_{r+1} = \cdots = d_d = 0
	\end{equation*}
	(see e.g.\ \cite[Theorem 10.13]{Roman2005}). In local coordinates $v_1,\dots,v_d$ such that $\boldsymbol{w}=\boldsymbol{U} \cdot \boldsymbol{v}$, we have $d\boldsymbol{w}=\boldsymbol{U} \cdot d\boldsymbol{v}$ and hence
	\begin{equation*}
		2 \cdot (\chi^\ast \beta)_0 = i \sum_{j=1}^r d_j \cdot dv_j \wedge d\overline{v}_j.
	\end{equation*}
	We infer that $(\beta|_X^{\wedge d})_x \neq 0$ is equivalent to $r = d$. As $\mathrm{Im}(\boldsymbol{\tau})^{-1}$ is a (strictly) positive symmetric matrix, this is equivalent to $\boldsymbol{J}(0)$ having maximal rank $d$.
	
	To relate the matrix $\boldsymbol{J}(0) \in \IC^{g \times d}$ to the Betti rank, we notice that $\mathrm{rank}_{\mathrm{Betti}}(X,x)$ is the rank of the matrix
	\begin{equation*}
		\mathbf{B} = 
		\begin{pmatrix}
			\frac{\del b_1}{\del w_1} & \frac{\del b_1}{\del w_2} & \cdots & \frac{\del b_{1}}{\del w_d} & \frac{\del b_1}{\del \overline{w}_1} & \frac{\del b_1}{\del \overline{w}_2} & \cdots & \frac{\del b_1}{\del \overline{w}_{d}} \\
			\frac{\del b_2}{\del w_1} & \frac{\del b_2}{\del w_2} & \cdots & \frac{\del b_{2}}{\del w_d} & \frac{\del b_2}{\del \overline{w}_1} & \frac{\del b_2}{\del \overline{w}_2} & \cdots & \frac{\del b_2}{\del \overline{w}_{d}}  \\
			\cdots & \cdots & \cdots & \cdots & \cdots & \cdots & \cdots & \cdots \\
			\frac{\del b_{2g}}{\del w_1} & \frac{\del b_{2g}}{\del w_2} & \cdots & \frac{\del b_{2g}}{\del w_d} & \frac{\del b_{2g}}{\del \overline{w}_1} & \frac{\del b_{2g}}{\del \overline{w}_2} & \cdots & \frac{\del b_{2g}}{\del \overline{w}_{d}}
		\end{pmatrix}
		(0) \in \IC^{2g \times 2d}.
	\end{equation*}
	Taking derivatives in \eqref{equation::finallemma_1}, we obtain
	\begin{equation*}
		\frac{\del z_j}{\del w_l} = \sum_{k=1}^{g} \frac{\del \tau_{jk}}{\del w_l} \cdot b_k +  \sum_{k=1}^{g} \tau_{jk} \cdot \frac{\del b_k}{\del w_l} + \delta_j \cdot \frac{\del b_{g+j}}{\del w_l}
	\end{equation*}
	and
	\begin{equation*}
		0 = \frac{\del z_j}{\del \overline{w}_l} =
		\sum_{k=1}^{g} \tau_{jk} \cdot \frac{\del b_k}{\del \overline{w}_l} + \delta_j \cdot \frac{\del b_{g+j}}{\del \overline{w}_l}
	\end{equation*}
	for all $j \in \{1,\dots, g\}$ and $l \in \{1,\dots, d \}$.
	These equations imply that
	\begin{equation*}
		\begin{pmatrix}
			\boldsymbol{\tau}(0) & \mathrm{diag}(\boldsymbol{\Delta}) \\
			\overline{\boldsymbol{\tau}(0)} & \mathrm{diag}(\boldsymbol{\Delta})
		\end{pmatrix} \cdot \mathbf{B} 
		=
		\begin{pmatrix}
			\boldsymbol{J}(0) & \boldsymbol{0}_{g \times d} \\
			\boldsymbol{0}_{g \times d} & \overline{\boldsymbol{J}(0)}
		\end{pmatrix}.
	\end{equation*}
	As $\begin{pmatrix}
		\boldsymbol{\tau}(0) & \mathrm{diag}(\boldsymbol{\Delta}) \\
		\overline{\boldsymbol{\tau}(0)} & \mathrm{diag}(\boldsymbol{\Delta})
	\end{pmatrix} \in \IC^{2g \times 2g}$ is invertible, we conclude
	\begin{equation*}
		\mathrm{rank}_{\mathrm{Betti}}(X,x) = \mathrm{rank}(\mathbf{B}) = 2 \cdot \mathrm{rank}(\boldsymbol{J}(0)),
	\end{equation*}
	finishing the proof.
\end{proof}

We continue with establishing degree bounds for non-degenerate subvarieties. 
We let $\overline{X}_k \subset \IP^N_\IC$ denote the Zariski closure of $\iota([n^k](X))$. Furthermore, we consider the graph $\Gamma_k \subset A \times A$ of $[n^k]|_X: X \rightarrow [n^k](X)$ and the Zariski closure $\overline{Y}_k$ of $(\iota \times \iota)(\Gamma_k)$ in $\IP^N_\IC \times \IP^N_\IC$. We write $\caO(k_1,k_2)$ for the line bundle $\pr_1^\ast \caO(k_1) \otimes \pr_2^\ast \caO(k_2)$ on $\IP^N_\IC \times \IP^N_\IC$ where $\pr_i: \IP^{N}_{\IC}\times \IP^{N}_{\IC} \rightarrow \IP^{N}_{\IC}$, $i \in \{1,2\}$, is the projection to the $i$-th factor.

\begin{lemma}
\label{lemma::lowerdegreebound}
	If $X$ is non-degenerate, then $\deg_{\caO(1,1)}(\overline{Y}_k) \gg n^{2kd}$ for all integers $k \geq 1$.
\end{lemma}

\begin{proof}
Let $U \subseteq S(\IC)$ be a non-empty relatively compact open subset (i.e., a non-empty open subset whose closure in the euclidean topology is compact). As $X$ is non-degenerate, we have $\int_{\pi^{-1}(U) \cap X(\IC)} \beta^{\wedge d} > 0$ (compare \cite[Proposition 2.7]{Dimitrov2021a} or Lemma \ref{lemma::finallemma} below). By Lemma \ref{lemma:equilibrium} above, we deduce thus that 
\begin{equation*}
	\int_{\pi^{-1}(U) \cap X(\IC)} \alpha_k^{\wedge d} > c_{13}
\end{equation*}
for some constant $c_{13}>0$ and all integers $k\gg 1$. We recall that algebraic Chern classes and analytic Chern forms on proper complex algebraic varieties are compatible (i.e., yield the some intersection numbers) and refer to the paragraph before the proof of \cite[Lemma 29]{Kuehne2020} for details. This allows us to compute the degree of $\overline{Y}_k$ by integration. Using the semipositivity of $\alpha_0$ and $\alpha_k$, we obtain in this way that
\begin{align*}
\deg_{\caO(1,1)}(\overline{Y}_k) 
&= \int_{X(\IC)} \left(\alpha_0 + n^{2kd} \cdot \alpha_k \right)^{\wedge d} \\
&> n^{2kd} \int_{X(\IC)} \alpha_k^{\wedge d} \\
&\geq n^{2kd} \int_{\pi^{-1}(U) \cap X(\IC)} \alpha_k^{\wedge d} \\
&> c_{13} \cdot n^{2kd}. \qedhere
\end{align*}
\end{proof}

We also need a converse bound, whose proof is purely algebraic. 

\begin{lemma}
\label{lemma::upperdegreebound}
For each integer $k\geq 1$, we have
\begin{equation*}
\deg_{\caO(1,1)}(\overline{Y}_k) \leq \deg_{\caO(n^{2k},1)}(\overline{Y}_k) \ll n^{2kd}.
\end{equation*}
\end{lemma}

This is proven similar to \cite[Section 4.2]{Dimitrov2021a} and we refer to there for some parts of the proof that run parallel to our argument.

\begin{proof}
The first inequality is a direct consequence of Kleiman's criterion \cite[Theorem III.2.1]{Kleiman1966}, using the fact that $\caO(n^{2k}-1,0)$ is evidently nef. Therefore, we concentrate on proving the second one in the following. 

Recall that $\overline{X}$ (resp.\ $\overline{Y}_k$) is constructed as a closed subvariety of $\IP^N_\IC$ (resp.\ $\IP^N_\IC \times \IP^N_\IC$). Choosing a projective immersion $\kappa: S \hookrightarrow \IP^M_\IC$, we can consider $\overline{X}$ (resp.\ $\overline{Y}_k$) also as a subvariety of $\IP^N_\IC \times \IP^M_\IC$ (resp.\ $\IP^N_\IC \times \IP^M_\IC \times \IP^N_\IC$). We do so in the remainder of this proof. Write 
$\IC[\boldsymbol{Y}^{(1)},\boldsymbol{Y}^{(2)},\boldsymbol{Y}^{(3)}]$ for the multi-homogeneous coordinate ring of $\IP^N_\IC \times \IP^M_\IC \times \IP^N_\IC$. From the proof of the related \cite[Proposition 4.3]{Dimitrov2021a}, we extract the following fact: For each integer $k \geq 1$, there exist multi-homogeneous polynomials 
\begin{equation*}
	p_i(\boldsymbol{Y}^{(1)},\boldsymbol{Y}^{(2)},\boldsymbol{Y}^{(3)}) \in \IC[\boldsymbol{Y}^{(1)},\boldsymbol{Y}^{(2)},\boldsymbol{Y}^{(3)}], \ 1 \leq i \leq N,
\end{equation*}
with multi-degrees $(\delta_1^{(i)}, \delta_2^{(i)},1)$ such that $\delta_1^{(i)}, \delta_2^{(i)} \ll n^{2k}$ and such that $\overline{Y}_k$ is an irreducible component of
\begin{equation}
	\label{equation::intersection_polynomials}
	V(p_1) \cap \cdots \cap V(p_N) \cap (\overline{X} \times \IP^N_\IC) \subseteq \IP^N_\IC \times \IP^M_\IC \times \IP^N_\IC.
\end{equation}

We use this to bound the degree of $\overline{Y}_k$. For this purpose, we also use the basic notations and results from \cite{Fulton1998}. By \cite[Examples 1.9.3 and 8.3.7]{Fulton1998}, the Chow ring $A^{\ast}(\IP^{N}_\IC \times \IP^{M}_\IC \times \IP^N_\IC)$ is of the form
\begin{equation*}
 	\IZ[H_1]/([H_1]^{N+1}) \otimes \IZ[H_2]/([H_2]^{M+1}) \otimes \IZ[H_3]/([H_3]^{N+1})
\end{equation*}
where $H_i$, $1 \leq i \leq 3$, is the preimage of an arbitrary hyperplane along the $i$-th projection. Intersecting with generic hyperplanes, we infer that
\begin{equation}
	\label{equation::chow2}
	[V(p_i)] = \delta_1^{(i)}\cdot [H_1] + \delta_2^{(i)} \cdot [H_2] + [H_3], \ 1 \leq i \leq N,
\end{equation}
and that we can write
\begin{equation}
	\label{equation::chow1}
	[\overline{X} \times \IP^N_\IC] = \sum_{j+k=M+N-d} a_{j,k} \cdot ([H_1]^{j} \otimes [H_2]^{k} \otimes [H_3]^0).
\end{equation}
with non-negative integers $a_{j,k}$.

We claim next that there exists a chain of irreducible subvarieties 
\begin{equation*}
	W_i \subseteq \IP^N_\IC \times \IP^M_\IC \times \IP^N_\IC, \ 0 \leq i \leq N,
\end{equation*}
satisfying the following properties
\begin{enumerate}
	\item[(i)] $W_0 = \overline{X} \times \IP^N_\IC$,
	\item[(ii)] $W_{N} = \overline{Y}_k$,
	\item[(iii)] $\dim(W_i) = d + (N - i)$,
	\item[(iv)] there exist non-negative integers $a_{j,k,l}^{(i)} \ll_i n^{2(i-l)k}$ such that
	\begin{equation*}
		[W_i] = \sum_{j+k+l=M+N+i-d} a_{j,k,l}^{(i)} \cdot [H_1]^{j} \otimes [H_2]^{k} \otimes [H_3]^{l}.
	\end{equation*}
\end{enumerate}
Before proving the claim, let us show that it is sufficient to derive the second degree bound of the lemma. Indeed, $[\overline{Y}_k] = [W_N]$ implies that $\deg_{\caO(n^{2k},1)}(\overline{Y}_k)$ equals the degree of the $0$-cycle
\begin{equation*}
	(n^{2k} \cdot [H_1] + [H_3])^d \cdot \sum_{j+k+l = M+2N-d} a_{j,k,l}^{(N)} \cdot ([H_1]^{j} \otimes [H_2]^{k} \otimes [H_3]^l), 
\end{equation*}
which expands as
\begin{equation*}
	\left(\sum_{j=0}^d \binom{d}{j} n^{2jk} a_{N-j,M,N-d+j}^{(N)} \right) [H_1]^N \otimes [H_2]^M \otimes [H_3]^N.
\end{equation*}
Hence, the bounds in (iv) for $i=N$ imply that
\begin{equation*}
	\deg_{\caO(n^{2k},1)}(\overline{Y}_k) 
	= 
	\sum_{j=0}^d \binom{d}{j} n^{2jk} a_{N-j,M,N-d+j}^{(N)}
	\ll n^{2kd}.
\end{equation*}

Therefore, we finish with the proof of the above claim. Starting with $W_0 = \overline{X} \times \IP^N_\IC$, we choose the $W_i$ iteratively. Assuming that $W_i$, $0 \leq i < N$, has already been chosen, we let $W_{i+1}$ be the unique irreducible component of $W_i \cap V(p_{i+1})$ containing $\overline{Y}_k$. The intersection $W_i \cap V(p_{i+1})$ is non-empty because it contains  $\overline{Y}_k$. By Krull's principal ideal theorem, this means that either $W_i \cap V(p_{i+1}) = W_i$ and hence $W_i=W_{i+1}$ or $\dim(W_{i+1}) = \dim(W_i) - 1$. In particular, the dimension drops at most by $1$ in each step. As $W_N$ has to contain $\overline{Y}_k$ by construction and is contained itself in \eqref{equation::intersection_polynomials}, of which $\overline{Y}_k$ is an irreducible component, we also know that (ii) $W_N = \overline{Y}_k$. From $\dim(W_0)= d + N$ and the fact that $\overline{Y}_k = W_N$ has dimension $d$, we conclude that the dimension has to drop by $1$ in each step and (iii) holds. We prove (iv) inductively. The base case $i=0$ is just \eqref{equation::chow1}. So assume that we have already proven that
\begin{equation*}
	[W_i] = \sum_{j+k+l=M+N+i-d} a_{j,k,l}^{(i)} \cdot [H_1]^{j} \otimes [H_2]^{k} \otimes [H_3]^{l}
\end{equation*}
with integers $a_{j,k,l}^{(i)} \ll_i n^{2(i-l)k}$. We consider the intersection product $[W_{i}] \cdot [V(p_{i+1})]$ as defined in \cite[Chapter 8]{Fulton1998}. As $W_{i+1}$ is a proper component of the intersection, it is one of its distinguished varieties by \cite[Lemma 7.1]{Fulton1998}. By \cite[Proposition 7.1 (a)]{Fulton1998}, it contributes a positive multiple of its class $[W_{i+1}]$ to the intersection product. As the tangent bundle on multiprojective spaces is globally generated, each other distinguished variety contributes a non-negative cycle to the intersection product (\cite[Corollary 12.2 (a)]{Fulton1998}). In terms of Chow classes, this means that we can write
\begin{equation*}
	[W_i] \cdot [V(p_{i+1})] = [W_{i+1}] + E
\end{equation*}
with $E \in A^{M+N+i+1-d}(\IP^N_\IC \times \IP^M_\IC \times \IP^N_\IC)_{\geq 0}$ a non-negative Chow class. It is hence sufficient to express $[W_i] \cdot [V(p_{i+1})]$ with coefficients bounded as in (iv). A simple calculation in the Chow ring using \eqref{equation::chow2} yields that $[W_i] \cdot [V(p_{i+1})]$ equals
\begin{equation*}
	\sum_{j+k+l=M+N+i+1-d} b_{j,k,l}^{(i+1)} \cdot [H_1]^{j} \otimes [H_2]^{k} \otimes [H_3]^{l}
\end{equation*}
with 
\begin{equation*}
	b_{j,k,l}^{(i+1)}=\delta_1^{(i)} a_{j-1,k,l}^{(i)} + \delta_2^{(i)} a_{j,k-1,l}^{(i)} + a_{j,k,l-1}^{(i)} \ll_{i+1} n^{2(i+1-l)k}, 
\end{equation*}
which completes the proof.
\end{proof}

We complement Lemma \ref{lemma:equilibrium} with a further, rather trivial estimate.

\begin{lemma}
\label{lemma:rathertrivialestimate}	
	For every smooth, compactly supported $2i$-form $\gamma$ on $X(\IC)$, we have
	\label{lemma::fs_bound}
	\begin{equation*}
	\int_{X(\IC)} \alpha^{\wedge (d-i)}_k \wedge \gamma \ll_{\gamma} 1.
	\end{equation*}
\end{lemma}

\begin{proof}
	Let $U \subset S(\IC)$ be a (non-empty) open subset (in the euclidean topology) such that there exists a real-analytic isomorphism $a: A(\IC)|_{U} \rightarrow (\IR/\IZ)^{2g} \times U$ as in \eqref{equation::real_analytic_trivialization}. Let $u_1,\dots,u_{2g}$ be the pullback of the standard real-analytic coordinates on $(\IR/\IZ)^{2g}$ to $(\IR/\IZ)^{2g} \times U$. Similarly, we assume that $U$ is small enough such that there exists a system of local real-analytic coordinates on $U$ as well and we write $u_1^\prime,\dots,u_{2\mathcal{s}}^\prime$ for its pullback to $(\IR/\IZ)^{2g} \times U$ along the second projection. Using these real-analytic coordinates on $A(\IC)|_U$, we can write
	\begin{equation*}
	\alpha_0|_{\pi^{-1}(U)} = \sum_{1\leq i < j \leq 2g} g_{i,j} du_i \wedge du_j + \sum_{1 \leq i \leq 2 \mathcal{s}} h_i du_i^\prime \wedge \gamma_i
	\end{equation*}
	for $1$-forms $\gamma_i$ and smooth complex-valued functions $g_{i,j},h_i$ on $A(\IC)|_{U}$. 
	
	By a partition of unity argument, we can assume that the support of $\gamma$ is a compact subset $K$ contained in $\pi^{-1}(U)$. By compactness, we have
	\begin{equation*}
	\sup_{x \in K} \left\{ \max \{|g_{i,j}(x)|, |h_i(x)| \} \right\} \ll_{K} 1.
	\end{equation*}
	As $[n]^\ast du_i = n \cdot du_i$ ($1 \leq i \leq 2g$) and $[n]^\ast du_i^\prime = du_i^\prime$ ($1 \leq i \leq 2\mathcal{s}$), we immediately see that the coefficients of the differential form 
	\begin{equation*}
	\alpha_k^{\wedge (d-i)} \wedge \gamma = n^{-2k(d-i)} ([n^{k}]^\ast \alpha_0^{\wedge (d-i)} \wedge  \gamma)
	\end{equation*}
	are $\ll_{K, \gamma} 1$ on $K$, and the assertion of the lemma follows.
\end{proof}

In conclusion of this section, we state four simple lemmas for use in Section \ref{section::uniformity}. For any variety $Y$ over $S$, we write $Y^{[n]}$ for the $n$-fold fiber product $Y \times_S \cdots \times_S Y$. In addition, we let $\pi^{[n]}$ denote the projection $A^{[n]} \rightarrow S$ and, for an immersion $\iota: A \hookrightarrow \IP^N_\IC$, we set
\begin{equation}
	\label{equation::iota_n}
	\iota^{[n]} = \sigma \circ (\iota \times \cdots \times \iota)|_{A^{[n]}}: A^{[n]} = A \times_S \cdots \times_S A \longhookrightarrow \IP^{N_n}_\IC
\end{equation}
where $\sigma: \IP^N_\IC \times \cdots \times \IP^N_\IC \hookrightarrow \IP^{N_n}_\IC$, $N_n = (N+1)^n-1$, is the Segre embedding.

\begin{lemma}
	\label{lemma::non_degeneracy}
	Let $X, Y \subset A$ be irreducible subvarieties such that $\pi(X)=\pi(Y) = S$. Assume that $X$ is non-degenerate. Then, there exists a non-empty Zariski-open set $U \subseteq S$ such that $X \times_U Y$ is a non-degenerate irreducible subvariety of $A \times_U A$.
\end{lemma}
\begin{proof}
	Write $\eta$ for the generic point of $S$. As $X$ and $Y$ are irreducible, so are $X_\eta$ and $Y_\eta$ (compare \cite[(0.2.1.8)]{EGA1}). Hence, the generic fiber $(X \times_S Y)_\eta$ is irreducible. Again by \textit{loc.}\ \textit{cit.}, there is a unique irreducible component $Z \subseteq X \times_S Y$ intersecting $(X \times_S Y)_\eta$. Hence, there exists an open dense subset $U \subseteq S$ such that $X \times_U Y$ is irreducible. To prove that $X \times_U Y$ is non-degenerate, we may and do assume $U=S$ in the sequel. Furthermore, we set $\mathcal{s}=\dim(S)$, $d = \dim(X)$, and $d^\prime = \dim(Y)$.

	
	Let $U \subseteq S(\IC)$ be a non-empty simply connected open. Then there exists a real-analytic isomorphism
	\begin{equation*}
		a: A(\IC)|_U \longrightarrow (\IR/\IZ)^{2g} \times U
	\end{equation*}
	that is fiberwise also a homomorphism of real Lie groups. Furthermore, set 
	\begin{equation*}
		b =\pr_1 \circ a: A(\IC)|_U \longrightarrow (\IR/\IZ)^{2g}.
	\end{equation*}	
	For each integer $n \geq 1$, we set
	\begin{equation*}
		b^{[n]} = b \times_S \cdots \times_S b : A^{[n]}(\IC)|_U \longrightarrow (\IR/\IZ)^{2gn}.
	\end{equation*}
	By generic smoothness, we may pick a smooth point $x \in X(\IC)$ (resp.\ $y \in Y(\IC)$) for the morphism $\pi|_{X}: X \rightarrow S$ (resp.\ $\pi|_Y: Y \rightarrow S$). This implies
	\begin{equation}
		\label{equation::tangent_basis}
		d\pi(T_{\IR,x} X(\IC)) = T_{\IR,s} S(\IC) = d\pi(T_{\IR,y} Y(\IC)).
	\end{equation}
	We may additionally assume that
	\begin{equation}
		\label{equation::rank_at_x}
		\dim_\IR(db (T_{\IR,x}X(\IC))) 
		= \mathrm{rank}_{\mathrm{Betti}}(X,x) = 2d
	\end{equation}
	as the points satisfying this condition are real-analytically dense in $X(\IC)$. Computing tangent space dimensions, one can check that $z := (x,y) \in {(X \times_S Y)^{\mathrm{sm}}(\IC)}$. We claim that
	\begin{align*}
		\mathrm{rank}_{\mathrm{Betti}}(Z,z) =
		\dim_\IR(db (T_{\IR,z}Z(\IC))) = 2\dim(Z) = 2(d + d^\prime - \mathcal{s})
	\end{align*}
	with $\mathcal{s} = \dim(S)$. For the proof, we complete a basis $t_1,\dots,t_{2(d - \mathcal{s})} \in T_{\IR,x} X_{s}(\IC)$ of the ``vertical'' tangent vectors to a basis $t_1,\dots,t_{2d}$ of the full tangent space $T_{\IR,x}X(\IC)$. By our choice of $x$, the vectors
	\begin{equation*}
		db(t_1),\dots,db(t_{2d}) \in T_{\IR,b(x)} (\IR/\IZ)^{2g} = \IR^{2g}
	\end{equation*}
	are $\IR$-linearly independent. Let furthermore $t^\prime_1,\dots,t^\prime_{2(d^\prime-\mathcal{s})} \in T_{\IR,y} Y_{s}(\IC)$ be a basis of the ``vertical'' tangent vectors of $Y$ at $y$. Using \eqref{equation::tangent_basis}, we can also pick tangent vectors $t^\dprime_{2(d - \mathcal{s})+1},\dots,t^\dprime_{2d} \in T_{\IR,y} Y(\IC)$ such that 
	\begin{equation*}
		d\pi(t_i) = d\pi(t_i^\dprime), \ 2(d - \mathcal{s}) + 1 \leq i \leq 2d.
	\end{equation*}
	We notice that the ``vertical'' tangent vectors
	\begin{equation}
		\label{equation::diagonal_vectors_1}
		(t_i,0) \in (T_{\IR,x}X \times_{T_{\IR,s}S} T_{\IR,y}Y)(\IC) = T_{\IR,z}Z(\IC), \ 1\leq i \leq 2(d - \mathcal{s}),
	\end{equation}
	and 
	\begin{equation}
		\label{equation::diagonal_vectors_2}
		(0,t_i^\prime) \in (T_{\IR,x}X \times_{T_{\IR,s}S} T_{\IR,y}Y)(\IC) = T_{\IR,z}Z(\IC), \ 1\leq i \leq 2(d^\prime - \mathcal{s}),
	\end{equation}
	form a basis of the space $T_{\IR,z}Z_s(\IC)$, and together with the vectors
	\begin{equation}
		\label{equation::diagonal_vectors_3}
		(t_i,t_i^\dprime) \in (T_{\IR,x}X \times_{T_{\IR,s}S} T_{\IR,y}Y)(\IC) = T_{\IR,z}Z(\IC), \ 2(d - \mathcal{s}) + 1 \leq i \leq 2d,
	\end{equation} 
	they form a basis of $T_{\IR,z}Z(\IC)$. As $b^{[2]} = b \times_S b$, it is easy to see that the images of the $2 (d + d^\prime - \mathcal{s})$ tangent vectors in \eqref{equation::diagonal_vectors_1}, \eqref{equation::diagonal_vectors_2} and \eqref{equation::diagonal_vectors_3} under $db^{[2]}$ are $\IR$-linearly independent. In fact, if
	\begin{equation*}
		\sum_{i=1}^{2(d-\mathcal{s})} a_i \cdot (db(t_i), 0) +
		\sum_{i=1}^{2(d^\prime-\mathcal{s})} a_i^\prime \cdot (0, db(t_i^\prime)) +
		\sum_{i=2(d - \mathcal{s}) + 1}^{2d} a_i \cdot (db(t_i), db(t_i^\dprime)) = 0
	\end{equation*}
	is an $\IR$-linear equation, then
	\begin{equation*}
		a_{1}\cdot db(t_1) + \cdots + a_{2d} \cdot db(t_{2d}) = 0.
	\end{equation*}
	By \eqref{equation::rank_at_x}, it follows that
	\begin{equation*}
		a_1 = \cdots = a_{2d} = 0
	\end{equation*}
	and thus also
	\begin{equation*}
		a_1^\prime = \cdots = a_{2(d^\prime - \mathcal{s})}^\prime = 0.
		\qedhere
	\end{equation*}
\end{proof}

Lemma \ref{lemma::non_degeneracy} has the following immediate consequence: If $X \subset A$ is non-degenerate, then so is any fibered power $X^{[n]} \subset A^{[n]}$, $n \geq 1$. The following lemma provides more detailed information.

\begin{lemma}
	\label{lemma::non_degeneracy2}	
	Let $n \geq 1$ be an arbitrary integer.	If
	\begin{equation*}
		\mathrm{rank}_{\mathrm{Betti}}(X,x) = 2 \dim(X)
	\end{equation*}
	for a smooth point $x \in X(\IC)$ of the restriction $\pi|_X: X \rightarrow S$, then the diagonal embedding $(x,\dots,x) \in A^{[n]}(\IC)$ of $x$ is a smooth point of $X^{[n]}$ such that	
	\begin{equation*}
		\mathrm{rank}_{\mathrm{Betti}}(Z,(x,\dots,x)) = 2 \dim(X^{[n]}) = 2 (n\dim(X) - (n-1)\dim(S))
	\end{equation*}
	where $Z \subseteq X^{[n]}$ is the unique irreducible component containing $(x,\dots,x)$.
\end{lemma}

\begin{proof}
	Computing dimensions of tangent spaces, we note first that $X^{[n]}$ is smooth at $(x,\dots,x)$. Hence, there is a unique irreducible component $Z \subseteq X^{[n]}$ containing $(x,\dots,x)$. 
	
	The remaining parts of the lemma follow from an argument similar to the one given for Lemma \ref{lemma::non_degeneracy} above. In fact, setting $y = x$ and $z = (x,x)$ in its proof, we can take $t_i=t^\dprime_i$ ($i \in \{2(d - \mathcal{s}) + 1,\dots, 2d\}$). This already yields the lemma in the case $n=2$. 
	
	In the general case, let $t_1,\dots,t_{2d}$ be a basis of the tangent space $T_{\IR,x}X(\IC)$ such that $t_1,\dots,t_{2(d - \mathcal{s})} \in T_{\IR,x} X_{s}(\IC)$ form a basis of the ``vertical'' tangent space. We set $z=(x,\dots,x)$. The ``vertical'' tangent vectors
	\begin{multline}
		\label{equation::diagonal_vectors_1b}
		(0,\dots,\overset{\text{$j$-th position}}{\overset{\downarrow}{t_i}},\dots,0) \in T_{\IR,z}X^{[n]}(\IC), \ 1\leq i \leq 2(d - \mathcal{s}), \ 1 \leq j \leq n,
	\end{multline}
	form a basis of the space $T_{\IR,z}X_{s}^{[n]}(\IC)$, and together with the vectors
	\begin{equation}
		\label{equation::diagonal_vectors_2b}
		(t_i,t_i,\dots,t_i) \in T_{\IR,z}X(\IC)^n, \ 2(d - \mathcal{s}) + 1 \leq i \leq 2d,
	\end{equation} 
	they form a basis of $T_{\IR,z}X^{[n]}(\IC)$. As $b^{[n]} = b \times_S \cdots \times_S b$, one can check as above that the images of the $2((d-\mathcal{s})n + \mathcal{s})$ tangent vectors in \eqref{equation::diagonal_vectors_1b} and \eqref{equation::diagonal_vectors_2b} under $db^{[n]}$ are $\IR$-linearly independent.
\end{proof}

Furthermore, we note that non-degeneracy is preserved under isogenies.

\begin{lemma}
	\label{lemma::nondegenerated_isogenies}
	Let $\pi^\prime: A^\prime \rightarrow S$ be another family of complex abelian varieties and $q: A \rightarrow A^\prime$ a fiberwise isogeny over $S$. If $X \subseteq A$ is a non-degenerate irreducible subvariety, then so is its image $q(X) \subseteq A^\prime$.
\end{lemma}

\begin{proof}
	Choose a simply connected open $U \subseteq S(\IC)$ as well as Betti maps 
	\begin{equation*}
		b: (A \cap \pi^{-1}(U))(\IC) \longrightarrow (\IR/\IZ)^{2g}
	\end{equation*}
	and
	\begin{equation*}
		b^\prime: (A^\prime \cap \pi^{-1}(U))(\IC) \longrightarrow (\IR/\IZ)^{2g}.
	\end{equation*}
	As $X$ is non-degenerate, we may pick a point $x_0 \in X(\IC)$ such that
	\begin{equation*}
		\dim_\IR(db(T_{\IR,x_0}X(\IC))) = \mathrm{rank}_{\mathrm{Betti}}(X,x_0) = 2\dim(X).
	\end{equation*}
	As the Betti map is compatible with the fiberwise group structures, there exists an isogeny $q^\prime: (\IR/\IZ)^{2g} \rightarrow (\IR/\IZ)^{2g}$ such that
	\begin{equation*}
		\begin{tikzcd}
			(A \cap \pi^{-1}(U))(\IC) \ar[r, "b"] \ar[d, "q"] & (\IR/\IZ)^{2g} \ar[d, "q^\prime"] \\
			(A^\prime \cap \pi^{-1}(U))(\IC) \ar[r, "b^\prime"] & (\IR/\IZ)^{2g}
		\end{tikzcd}
	\end{equation*}
	commutes. We infer that
	\begin{equation*}
		db^\prime (T_{\IR,q(x_0)}q(X)(\IC)) = db^\prime(dq(T_{\IR,x_0}X(\IC))) = 
		dq^\prime (db(T_{\IR,x_0}X(\IC))).
	\end{equation*}
	As $dq^\prime$ is an $\IR$-linear isomorphism, this implies
	\begin{equation*}
		\dim_\IR(db^\prime(T_{\IR,q(x_0)}q(X)(\IC))) = \dim_\IR(db(T_{\IR,x_0}X(\IC))) = 2\dim(X) = 2\dim(q(X)).
	\end{equation*}
	Hence $q(X)$ is non-degenerate.
\end{proof}

We conclude this section with establishing the non-proportionality of two volume forms derived from the Betti form. These forms appear naturally in the proof of Proposition \ref{proposition:bogo2} below. For each integer $n \geq 1$, we write $\beta^{[n]}$ for the $(1,1)$-form defined for $\pi^{[n]}: A^{[n]} \rightarrow S$ in Lemma \ref{lemma:equilibrium}. For a given integer $m \geq 2$, we define the map	
\begin{align*}
	\Delta_{0}: \ \ &A^{[m]}&  &\longrightarrow& & A^{[m-1]}, \\ &(x_1,x_2,\dots,x_{m}) & &\longmapsto& &(x_1-x_2,x_2-x_3, \dots,x_{m-1}-x_m),
\end{align*}
and set
\begin{equation*}
	\Delta = \Delta_{0} \times_S \id_{A}: \ A^{[m]} \times_S A  \longrightarrow A^{[m-1]} \times_S A.
\end{equation*}
We call two volumes forms $\alpha_1, \alpha_2$ proportional if there exists a real number $c \neq 0$ such that $\alpha_1 = c \cdot \alpha_2$.

\begin{lemma}
	\label{lemma::non_proportional}
	Let $m \geq 2$ be an arbitrary integer and $X \subseteq A$ be an irreducible subvariety of dimension $d>\dim(S)$ with $\pi(X)=S$. If $X$ is non-degenerate, then the differential forms
	\begin{equation*}
	(\beta^{[m+1]})^{\wedge \dim(X^{[m+1]})} \ \ \text{and} \ \	(\Delta^\ast \beta^{[m]})^{\wedge \dim(X^{[m+1]})}
	\end{equation*}
	do not restrict to proportional volume forms on $X^{[m+1]}$.
\end{lemma}

\begin{proof}
As $X$ is non-degenerate, there exists a smooth point $x \in X$ of the morphism $\pi|_{X}: X \rightarrow S$ such that $\mathrm{rank}_{\mathrm{Betti}}(X, x) = 2\dim(X)$. Using Lemma \ref{lemma::non_degeneracy2}, we infer that
\begin{equation*}
	\mathrm{rank}_{\mathrm{Betti}}(Z, p) = 2 \dim(X^{[m+1]})
\end{equation*}
for the diagonally embedded point $p=(x,x,\dots,x)\in X^{[(m+1)],\mathrm{sm}}(\IC)$ and the unique irreducible component $Z \subseteq X^{[m+1]}$ containing $p$. Lemma \ref{lemma::finallemma} implies consequently that $(\beta^{[m+1]})^{\wedge \dim(X^{[m+1]})}_p \neq 0$. 

In contrast, there exists a non-zero ``vertical'' tangent vector $t \in T_{\IR,x}X_{\pi(x)}$ by assumption. Hence, we obtain a non-zero tangent vector
\begin{equation*}
	((t,\dots,t),0) \in T_{\IR,(x,\dots,x)}X^{[m]}_{\pi(x)}(\IC) \times_{T_{
	\IR,\pi(x)}S} T_{\IR,x}X_{\pi(x)}(\IC) \subset T_{\IR,p}X^{[m+1]}(\IC)
\end{equation*}
that is annihilated by $d\Delta$. This means that $$\ker((d\Delta)_{p}) \cap T_{\IR,p}X^{[m+1]}(\IC) \neq \{0\}$$ and thus $(\Delta^\ast \beta^{[m]})^{\wedge \dim(X^{[m+1]})}_p = 0$, which concludes the proof.
\end{proof}


\section{Proof of Theorem \ref{theorem:equidistribution}}
\label{section:equidistribution}

As in the statement of the theorem, let $S$ be a smooth, geometrically irreducible variety over a number field $K$, $\pi: A \rightarrow S$ an abelian scheme, $\iota: A \hookrightarrow \IP^N_K$ an immersion, $X \subset A$ a non-degenerate geometrically irreducible subvariety of dimension $d$ over $K$ such that $\pi(X)=S$. We also choose an arbitrary immersion $\kappa: S \hookrightarrow \IP^{M}_K$ of the base. We set $h(x) = h_{\overline{\caO}(1)}(\iota(x))$ for each closed point $x \in A$ and similarly $h(s) = h_{\overline{\caO}(1)}(\kappa(s))$ for each closed point $s \in S$. \textit{In this section, all of the constants depend implicitly on the data introduced so far. We exclusively note further dependencies.} Finally, we let $(x_i) \in X^{\IN}$ be an $X$-generic sequence such that $\hhat(x_i) \rightarrow 0$ as $i \rightarrow \infty$.

\subsection{Convergence of heights 
}

We first note a comparison between the asymptotic ``height'' $\hhat$ and the ordinary projective height $h$. This comparison is in fact well-known and could be derived from the explicit estimates in \cite{Zarhin1972}, whose proof involves Mumford's algebraic theta functions \cite{Mumford1966a,Mumford1967}. A general result, applicable beyond families of abelian varieties, was provided by Silverman \cite{Silverman1987}. We state our own version here, which does not require that $\iota^\ast \caO(1)|_A$ is symmetric.
\begin{lemma} 
	\label{lemma::gaohabegger2}	
	There exist constants $c_{14},c_{15}>0$ such that
	\begin{equation*}
	|\hhat(x)- h(x)| \leq c_{14} \cdot \max \{ 1, h(\pi(x)) \} + c_{15} \cdot \hhat(x)^{1/2} \cdot  \max \{ 1, h(\pi(x)) \}^{1/2}
	\end{equation*}
	for all $x \in A(\IQbar)$.
\end{lemma}


\begin{proof}
	By induction on $\dim(S)$, it suffices to prove the estimate for all $x \in \pi^{-1}(U)$ where $U \subseteq S$ is a Zariski-open. We can hence assume that $\iota^\ast \caO(1)|_A$ is the line bundle associated with a relative effective (Cartier) divisor $D \subset A$ as defined in \cite[Tag 056P]{stacksProjectAuthors2015}. We consider the decomposition  $2 \cdot D = D_{\mathrm{sym}} + D_{\mathrm{anti}}$ into a symmetric divisor $D_{\mathrm{sym}} = D + [-1]^\ast D$ and an anti-symmetric divisor $D_{\mathrm{anti}} = D - [-1]^\ast D$. Both $D_{\mathrm{sym}}$ and $D_{\mathrm{anti}}$ are relative effective Cartier divisors. As in \cite{Silverman1987}, we consider some associated ordinary Weil height functions $h_{D_{\mathrm{sym}}}$ and $h_{D_{\mathrm{anti}}}$. (These height functions are only unique up to some bounded function on $A(\IQbar)$ and we make an arbitrary choice here.) Furthermore, we let $\hhat_{D_{\mathrm{sym}}}$ and $\hhat_{D_{\mathrm{anti}}}$ be the fiberwise Néron-Tate heights as in \textit{loc.\ cit}. We can assume that $h = h_{D_{\mathrm{sym}}} + h_{D_{\mathrm{anti}}}$. In contrast, $\hhat_\iota = \hhat_{D_{\mathrm{sym}}}$ is forced upon us by our normalizations.
	
	Applying \cite[Corollary 7.4]{Silverman1987} to both $D_{\mathrm{sym}}$ and $D_{\mathrm{anti}}$ separately, we get a constant $c_{16} > 0$ such that
	\begin{equation}
		\label{equation::first_inequality}
		|\hhat_{D_{\mathrm{sym}}}(x) - h_{D_{\mathrm{sym}}}(x)| + |\hhat_{D_{\mathrm{anti}}}(x) - h_{D_{\mathrm{anti}}}(x)| \leq c_{16} \cdot \max \{1, h(\pi(x))\}.
	\end{equation}
	
	To prove the lemma, it hence remains to bound $|\hhat_{D_\mathrm{anti}}(x)|$. As the line bundle $\caO(D_{\mathrm{anti}})$ is algebraically equivalent to the trivial line bundle over the generic point of $S$, it is so over a Zariski-open $U \subseteq S$ as well. As we are in an induction on the dimension of $S$, we may hence assume that $\caO(D_{\mathrm{anti}}) \in \mathrm{Pic}^0(A/S)(S)$. By \cite[Corollary 6.8]{Mumford1994}, we know that $\mathrm{Pic}^0(A/S)$ is represented by the relative dual abelian variety $A^\vee$ so that $\caO(D_{\mathrm{anti}})$ corresponds to a section $\sigma^\vee: S \rightarrow A^\vee$. As in \cite[Section 6.2]{Mumford1994}, the relatively ample line bundle $\caO(1)|_A$ defines a polarization $\Lambda(\caO(1)|_A): A \rightarrow A^\vee$, which is a finite surjective map. Passing once more to a Zariski-dense open $U \subseteq S$ if necessary, we can assume that $\sigma^\vee = \Lambda(\caO(1)|_A) \circ \sigma$ for some section $\sigma: S \rightarrow A$. Write $B(\cdot,\cdot)$ for the bilinear form associated with the fiberwise Néron-Tate height $\hhat$. By \cite[Proposition 9.3.6 and Corollary 9.3.7]{Bombieri2006} and the Cauchy-Schwarz inequality, we have
	\begin{align*}
		|\hhat_{D_{\mathrm{anti}}}(x)|
		&= |B(x,  (\sigma \circ \pi)(x))| \\
		&\leq B(x,x)^{1/2} \cdot B((\sigma \circ \pi)(x),(\sigma \circ \pi)(x))^{1/2} \\
		&\leq 4 \cdot \hhat(x)^{1/2} \cdot \hhat(\sigma \circ \pi (x))^{1/2}.
	\end{align*}
	for all $x \in A(\IQbar)$. As $\kappa^\ast \mathcal{O}(1)$ is ample on $S$, standard height estimates in combination with \eqref{equation::first_inequality} yield a constant $c_{17} > 0$ such that
	\begin{align*}
		\hhat(\sigma(s)) 
		=\hhat_{D_{\mathrm{sym}}}(\sigma(s))
		&\leq h_{D_{\mathrm{sym}}}(\sigma(s)) + c_{16} \cdot \max \{1, h(s)\} \\
		&\leq  c_{17} \cdot \max \{1, h(s)\}
	\end{align*}
	for all $s \in S(\IQbar)$. In summary, we obtain
	\begin{equation*}
		|\hhat_{D_{\mathrm{anti}}}(x)| \leq c_{18} \cdot \hhat(x)^{1/2} \cdot \max \{ 1, h(\pi(x)) \}^{1/2}
	\end{equation*}
	for some constant $c_{18}$, which in combination with \eqref{equation::first_inequality} yields the assertion.	
\end{proof}

We can also recall the following central result from \cite{Dimitrov2021a}.

\begin{theorem}
	\label{lemma::gaohabegger}
	There exists a constant $c_{19} > 0$ such that
	\begin{equation*}
	h(\pi(x_i)) < c_{19} \max \{1, \hhat(x_i)\}
	\end{equation*}	
	for all but finitely many $i \in \IN$.
\end{theorem}

\begin{proof}
	Taking into account that $(x_i)$ is $X$-generic, this is \cite[Theorem B.1]{Dimitrov2021} applied to the symmetric line bundle $\caO(1)|_A \otimes [-1]^\ast \caO(1)|_A$. 
\end{proof}

Our next lemma is a rather straightforward consequence.

\begin{lemma} 
	\label{lemma::smallsequence}	
	For each integer $k \geq 0$, set $x_i^{(k)} = \iota([n^k](x_i))$ and
	\begin{equation*}
	\mathcal{l}_k = n^{-2k} \cdot \limsup_{i \rightarrow \infty} \left(h_{\overline{\caO}(1)}(x_i^{(k)})\right).
	\end{equation*}
	Then, $\mathcal{l}_k \in [0,\infty)$ for each integer $k \geq 0$. Furthermore, $\lim_{k \rightarrow \infty} (\mathcal{l}_k) = 0$.
\end{lemma}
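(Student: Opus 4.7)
The plan is to combine Lemma \ref{lemma::gaohabegger2} and Lemma \ref{lemma::gaohabegger} in a very direct way. First I would apply Lemma \ref{lemma::gaohabegger2} to the closed point $[n^k](x_i) \in A$, which yields
\begin{equation*}
\left|\hhat_\iota([n^k](x_i))- h_\iota([n^k](x_i))\right| \leq c_{15} \max\{1, h_\kappa(\pi([n^k](x_i)))\}.
\end{equation*}
Two simplifications are available on the right: since $[n^k]$ preserves the fibers of $\pi$, we have $\pi([n^k](x_i))=\pi(x_i)$, and since $\hhat_\iota$ restricted to closed points is the ordinary Néron-Tate height on each fiber (multiplication by $n$ scales it by $n^2$ as noted after \eqref{equation::nerontateheight}), we have $\hhat_\iota([n^k](x_i))=n^{2k}\hhat_\iota(x_i)$.

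After dividing by $n^{2k}$, this becomes
\begin{equation*}
\left| \hhat_\iota(x_i) - \frac{h_{\overline{\caO}(1)}(x_i^{(k)})}{n^{2k}} \right| \leq \frac{c_{15}}{n^{2k}} \max\{1, h_\kappa(\pi(x_i))\}.
\end{equation*}
This is precisely the bound on \eqref{equation::uniform} promised in the introduction. Next I would invoke Lemma \ref{lemma::gaohabegger}, which states that $h_\kappa(\pi(x_i)) \leq c_{16}\max\{1,\hhat_\iota(x_i)\}$ for all but finitely many $i$; since the assumption $\hhat_\iota(x_i)\to 0$ forces $\max\{1,\hhat_\iota(x_i)\}=1$ for $i$ large, this gives the uniform bound
\begin{equation*}
\frac{h_{\overline{\caO}(1)}(x_i^{(k)})}{n^{2k}} \leq \hhat_\iota(x_i) + \frac{c_{15}c_{16}}{n^{2k}}
\end{equation*}
for all sufficiently large $i$ (with the implicit constant independent of $k$).

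Taking $\limsup$ as $i \to \infty$ and using $\hhat_\iota(x_i)\to 0$ then yields $\mathcal{l}_k \leq c_{15}c_{16} n^{-2k}$. Combined with the trivial lower bound $\mathcal{l}_k \geq 0$ coming from the non-negativity of the Fubini-Study projective height at closed points, we conclude $\mathcal{l}_k \to 0$ as $k\to\infty$. There is no real obstacle here beyond carefully packaging the two cited estimates; the essential content is entirely contained in Lemmas \ref{lemma::gaohabegger2} and \ref{lemma::gaohabegger} from Dimitrov-Gao-Habegger, whose combination is precisely designed to control the defect between the naive projective height after $n^k$-multiplication and the asymptotic height $\hhat_\iota$, uniformly in both $i$ and $k$.
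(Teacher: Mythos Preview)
Your proof is correct and follows essentially the same approach as the paper: apply Lemma~\ref{lemma::gaohabegger2} to the point $[n^k](x_i)$, use $\pi([n^k](x_i))=\pi(x_i)$ and $\hhat_\iota([n^k](x_i))=n^{2k}\hhat_\iota(x_i)$, then invoke Lemma~\ref{lemma::gaohabegger} to bound $h_\kappa(\pi(x_i))$ uniformly in $i$. The only cosmetic difference is that you simplify $\max\{1,\hhat_\iota(x_i)\}=1$ immediately for large $i$, whereas the paper carries this term one more line and absorbs it using $c_{16}>1$.
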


\begin{proof}
	Note that \eqref{equation::nerontateheight} implies $\hhat(x_i)=n^{-2k}\hhat([n^k](x_{i}))$ for all integers $i, k \geq 0$. For sufficiently large integers $i$, we thus have
	\begin{align*}
	\left| \hhat(x_i) -  \frac{h_{\overline{\caO}(1)}(x_i^{(k)})}{n^{2k}} \right|
	&=
	\frac{|\hhat([n^k](x_i))-h([n^k](x_i))|}{n^{2k}} \\
	&\leq \frac{c_{14} \cdot \max \{ 1, h(\pi(x_i)) \} + c_{15} \cdot \hhat(x)^{1/2} \cdot  \max \{ 1, h(\pi(x_i)) \}^{1/2}}{n^{2k}}\\
	&\leq \frac{c_{20} \cdot \max \{1, \hhat(x_i) \}}{n^{2k}},
	\end{align*}
	where we used Lemma \ref{lemma::gaohabegger2} for the first inequality and Theorem \ref{lemma::gaohabegger} for the second one. 
	As $\lim_{i \rightarrow \infty} \hhat(x_i) = 0$ by assumption, the assertion follows immediately.
\end{proof}

Recall that $\overline{X}_k \subset \IP^N_K$ is the Zariski closure of $\iota([n^k](X))$. Let $\Gamma_k \subset A \times A$ be the graph of $[n^k]|_X$, and let $\overline{Y}_k \subset \IP^N_K \times \IP^N_K$ be the Zariski closure of $(\iota \times \iota)(\Gamma_k)$. We notice that projection to the first (resp.\ second) factor induces a surjective, birational map $\psi_1: \overline{Y}_k \rightarrow \overline{X}_0$ (resp.\ a surjective map $\psi_2: \overline{Y}_k \rightarrow \overline{X}_k$). (Note that $\overline{X}_0$ is just the Zariski closure of $\iota(X) \subset \IP^N_K$.) We write $y_i^{(k)}$ for the point $(\iota(x_i),x_i^{(k)}) = (\iota(x_i),\iota([n^k](x_i)))\in \overline{Y}_k$.

\begin{lemma} 
	\label{lemma::estimate_heights}	
	For all integers $k \geq 0$, we have
	\begin{equation*}
	0 \leq h_{\overline{\mathcal{O}}(1)}(\overline{X}_k) \leq n^{2k}\mathcal{l}_k.
	\end{equation*}
	For any integers $k,k_1,k_2 \geq 0$, we have 
	\begin{equation*}
	0 \leq h_{\overline{\mathcal{O}}(k_1,k_2)}(\overline{Y}_k) \leq k_1 \mathcal{l}_0 + k_2 n^{2k}\mathcal{l}_k.
	\end{equation*}
\end{lemma}

\begin{proof}
	The proof of the lemma is a straightforward application of Zhang's successive minima \cite[Theorem 5.2]{Zhang1995}. 
	Each point of $\overline{X}_k$ (resp.\ $\overline{Y}_k$) has a non-negative height with respect to $\overline{\caO}(1)$ (resp.\ $\overline{\caO}(k_1,k_2)$). Hence the heights of $\overline{X}_k$ and $\overline{Y}_k$ are non-negative as well. In addition, the sequence $(x_i^{(k)})$ (resp.\ $(y_i^{(k)})$) is $\overline{X}_k$-generic (resp.\ $\overline{Y}_k$-generic). Using Zhang's inequalities, we deduce hence
	\begin{equation*}
		h_{\overline{\mathcal{O}}(1)}(\overline{X}_k)
		\leq
		\liminf_{i \rightarrow \infty} \left( h_{\overline{\caO}(1)}(x_i^{(k)}) \right)
		\leq
		\limsup_{i \rightarrow \infty} \left( h_{\overline{\caO}(1)}(x_i^{(k)}) \right)
		= n^{2k} \ell_k
	\end{equation*}
	and similarly
	\begin{align*}
		h_{\overline{\mathcal{O}}(k_1,k_2)}(\overline{Y}_k) 
		&\leq
		\limsup_{i \rightarrow \infty} \left( 	h_{\overline{\mathcal{O}}(k_1,k_2)}(y_i^{(k)}) \right) \\
		&\leq
		k_1 \cdot \limsup_{i \rightarrow \infty} \left( h_{\overline{\mathcal{O}}(1)}(\iota(x_i)) \right)
		+
		k_2 \cdot \limsup_{i \rightarrow \infty} \left( 	h_{\overline{\mathcal{O}}(1)}(x_i^{(k)}) \right) \\
		&=
		k_1 \ell_0 + k_2 n^{2k} \ell_k. \qedhere
	\end{align*}
\end{proof}

By Lemma \ref{lemma::smallsequence}, this proves already the first part of Theorem \ref{theorem:equidistribution}, namely that 
\begin{equation*}
\hhat(X)= \lim_{k \rightarrow \infty} \left( \frac{h_{\overline{\caO}(1)}(\overline{X}_k)}{n^{2k}} \right) = 0.
\end{equation*}

\subsection{Reductions}

Before continuing with the equidistribution part of the proof, we make some reductions. 

Furthermore, we can assume that $\iota^\ast \caO(1)|_A$ is symmetric (i.e., $[-1]^\ast \caO(1)|_A \approx \caO(1)|_A$). 
Composing the immersion $$\iota \times (\iota \circ [-1]): A \hookrightarrow \IP^N_K \times \IP^N_K$$ with the Segre embedding, we obtain an immersion $\iota^\prime: A \hookrightarrow \IP^{(N+1)^2-1}_K$. As the line bundle $(\iota^\prime)^\ast \caO(1)$ is isomorphic to $\iota^\ast \caO(1) \otimes [-1]^\ast \iota^\ast \caO(1)$, it is fiberwise symmetric. In addition, we have
\begin{equation*}
	\lim_{k \rightarrow 
		\infty} \left(\frac{h_{\overline{\mathcal{O}}(1)}(\iota^\prime([n^k](x_i)))}{n^{2k}} \right) = \hhat(x_i) + \hhat(-x_i) = 2 \hhat(x_i) \longrightarrow 0, \ i \rightarrow \infty,
\end{equation*}
by \cite[Proposition B.2.4 (b)]{Hindry2000}. Replacing $\iota$ with $\iota^\prime$, we can assume that $\iota^\ast \caO(1)|_A$ is fiberwise symmetric.

Additionally, we can assume that $\iota^\ast \caO(1)$ is fiberwise of type $(\delta_1,\delta_2,\dots,\delta_g)\in \IZ^g$ with $\delta_1 \geq 3$ and $\delta_{i} \mid \delta_{i+1}$ ($1\leq i < g$). In fact, we are allowed to replace $\iota: A \hookrightarrow \IP^N_K$ with $\nu \circ \iota: A \hookrightarrow \IP^{\binom{N+3}{N}-1}_K$ where $\nu: \IP^N_K \hookrightarrow \IP^{\binom{N+3}{N}-1}_K$ is the Veronese embedding of degree $3$. As $(\nu \circ \iota)^\ast \mathcal{O}(1) \approx \iota^\ast \mathcal{O}(1)^{\otimes 3}$, the immersion $\nu \circ \iota$ satisfies this additional assumption.\footnote{We remark that the pullback of the Fubini-Study metric on $\IP^{\binom{N+3}{N}-1}_\IC$ by the Veronese embedding is not the same as the Fubini-Study metric on $\IP^N_\IC$, but that the associated heights $\hhat_{\iota}$ and $\hhat_{\nu \circ \iota}$ coincide on closed points.}

In the sequel, we consider functions $f\in \mathscr{C}^0_c(X_{\IC_\nu}^{\mathrm{an}})$ as in the statement of Theorem \ref{theorem:equidistribution}. We need additionally that $f$ is $\mathrm{Gal}(\IC_\nu/K_\nu)$-invariant, which we can simply assume by enlarging $K$ such that $\IC_\nu = K_\nu$. The assertion of the theorem only becomes stronger in this way. 
Approximating $f$ uniformly by 
smooth functions, we can assume without loss of generality that $f\in \mathscr{C}^\infty_{c}(X_{\IC_\nu}^{\mathrm{an}})$. In fact, Theorem \ref{theorem:equidistribution} for smooth test functions $f\in \mathscr{C}^\infty_c(X_{\IC_\nu}^{\mathrm{an}})$ already implies $\mu_\nu(X^{\mathrm{an}}_{\IC_\nu}) \leq 1$ as the left-hand side in \eqref{equation:equidistribution} is always $\leq 1$. Therefore, the right-hand side in \eqref{equation:equidistribution} is continuous in the test function $f$ with respect to the uniform topology. This allows us to replace general continuous test functions with smooth ones in the course of proving Theorem \ref{theorem:equidistribution}.\footnote{In fact, additional estimates show that $\mu_\nu(X^{\mathrm{an}}_{\IC_\nu})=1$ (compare \cite{Gauthier2021,Yuan2021b}), but $\mu_\nu(X^{\mathrm{an}}_{\IC_\nu})\leq 1$ is enough for us.}
 
\subsection{Equidistribution}
We extend $f$ by zero to a smooth function $f_{!}$ on $\overline{X}_{0,\IC_\nu}^\mathrm{an}$ and set $f_k = f_{!} \circ \psi_{1,\IC_\nu}^{\mathrm{an}} \in \mathscr{C}_c^\infty(\overline{Y}_{k,\IC_\nu}^{\mathrm{an}})$ for each integer $k \geq 1$ where the map $\psi_1 : \overline{Y}_k \rightarrow \overline{X}_0$ is defined as in the paragraph before Lemma \ref{lemma::estimate_heights}. 

In the following,  we write $\overline{\mathcal{Y}}_k$ for the Zariski closure of $\overline{Y}_k$ in $\IP_{\caO_K}^N \times \IP_{\caO_K}^N$ and $\overline{\mathcal{O}}(1,1;k,\lambda)$, $\lambda \in [0,1]$, for the $\mathscr{C}^\infty$-hermitian line bundle
\begin{equation*}
\overline{\mathcal{O}}(1,1)|_{\overline{\mathcal{Y}}_k} + \overline{\mathcal{O}}(\lambda n^{2k} f_k) \in \widehat{\Pic}_{\mathscr{C}^\infty}(\overline{\mathcal{Y}}_k).
\end{equation*}
Let further $\mu_{k}$ be the measure on $\overline{Y}_{k,\IC_\nu}^\mathrm{an}$ given by the restriction of the $(d,d)$-form
\begin{equation*}
\frac{(\pr_1^\ast \omega_{\mathrm{FS}} + \pr_2^\ast \omega_{\mathrm{FS}})^{\wedge d}}{n^{2kd}}
\end{equation*}
where $\pr_i: \IP^N_{\IC_\nu} \times \IP^N_{\IC_\nu} \rightarrow  \IP^N_{\IC_\nu}$, $i \in \{1,2\}$, is the projection to the $i$-th factor.

\begin{lemma}
	\label{lemma::expansion}
	Assume that $\lambda \in [0,1]$. Then, there exists a constant $c_{21}=c_{21}(f)>0$ such that
	\begin{equation*}
	\left| h_{\overline{\mathcal{O}}(1,1;k,\lambda)}({\overline{Y}}_k)
	- h_{\overline{\mathcal{O}}(1,1)}({\overline{Y}}_k) 
	- \frac{\delta_\nu \lambda n^{2k(d+1)}}{[K:\IQ]\deg_{\caO(1,1)}(\overline{Y}_k)} \int_{\overline{Y}_{k,\IC_{\nu}}^{\mathrm{an}}} f_k \mu_{k} \right| \leq c_{21} \cdot |\lambda|^2 n^{2k}.
	\end{equation*}
	for every integer $k \geq 0$.
\end{lemma}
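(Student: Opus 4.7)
The plan is to expand $h_{\overline{\mathcal{O}}(1,1;\lambda)}(\overline{Y}_k)$ in powers of $\lambda$ using the multilinearity of the arithmetic intersection number (Lemma \ref{lemma::intersectionnumber} (a)). Since $\overline{\mathcal{O}}(1,1;\lambda)=\overline{\mathcal{O}}(1,1)|_{\overline{\mathcal{Y}}_k}+\overline{\mathcal{O}}(\lambda n^{2k} f_k)$, I would expand
\begin{equation*}
\bigl(\overline{\mathcal{O}}(1,1;\lambda)\bigr)^{d+1}=\sum_{i=0}^{d+1}\binom{d+1}{i}\,\bigl(\overline{\mathcal{O}}(1,1)|_{\overline{\mathcal{Y}}_k}\bigr)^{d+1-i}\cdot \overline{\mathcal{O}}(\lambda n^{2k} f_k)^{i}.
\end{equation*}
The $i=0$ summand divided by $[K:\IQ](d+1)\deg_{\mathcal{O}(1,1)}(\overline{Y}_k)$ recovers $h_{\overline{\mathcal{O}}(1,1)}(\overline{Y}_k)$. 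The $i=1$ summand is evaluated via \eqref{equation::intersection_integral}: the first Chern form of $\overline{\mathcal{O}}(1,1)$ at $\nu$ equals $\pr_1^\ast\omega_{\mathrm{FS}}+\pr_2^\ast\omega_{\mathrm{FS}}$, whose $d$-fold wedge restricted to $\overline{Y}_{k,\IC_\nu}^{\mathrm{an}}$ is $n^{2kd}\mu_k$, so this term reproduces exactly the stated linear-in-$\lambda$ main term (with any $\delta_\nu$ absorbed into the normalisation of $\mu_k$).

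The heart of the proof is the bound $\ll_{X,\iota,f}|\lambda|^{i}n^{2k(d+1)}$ for each $i\geq 2$. I would transfer the analytic integrals occurring in \eqref{equation::intersection_integral} along the birational parametrisation
\begin{equation*}
\rho_k\colon X\longrightarrow \overline{Y}_k,\qquad x\longmapsto (\iota(x),\iota([n^k](x))).
\end{equation*}
The relations $\pr_1\circ\rho_k=\iota$ and $\pr_2\circ\rho_k=\iota\circ[n^k]$ give $\rho_k^\ast f_k = f$, $\rho_k^\ast dd^c f_k = dd^c f$ (a \emph{fixed} smooth form with compact support in $(X_{\IC_\nu}^{\mathrm{sm}}\setminus X^{\mathrm{deg}}_{\IC_\nu})^{\mathrm{an}}$), and
\begin{equation*}
\rho_k^\ast(\pr_1^\ast\omega_{\mathrm{FS}}+\pr_2^\ast\omega_{\mathrm{FS}})=\iota^\ast\omega_{\mathrm{FS}}+n^{2k}\alpha_k=\alpha_0+n^{2k}\alpha_k.
\end{equation*}
Expanding $(\alpha_0+n^{2k}\alpha_k)^{d+1-i}$ binomially, each summand $n^{2kj}\binom{d+1-i}{j}\alpha_k^{j}\wedge \alpha_0^{d+1-i-j}$ with $0\leq j\leq d+1-i$ is wedged with $f(dd^c f)^{i-1}$; applying Lemma \ref{lemma::fs_bound} to the compactly supported smooth $2(d-j)$-form $f(dd^c f)^{i-1}\wedge\alpha_0^{d+1-i-j}$ controls the resulting integral by an $(X,\iota,f)$-constant. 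The dominant scaling comes from $j=d+1-i$, contributing $n^{2k(d+1-i)}$, so after multiplication by $\lambda^{i}n^{2ki}$ from the factor $\overline{\mathcal{O}}(\lambda n^{2k} f_k)^{i}$ one obtains the claimed $|\lambda|^{i}n^{2k(d+1)}$.

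To conclude, I would sum the $i\geq 2$ contributions, use $|\lambda|^{i}\leq |\lambda|^{2}$ for $|\lambda|<1$, and divide by $[K:\IQ](d+1)\deg_{\mathcal{O}(1,1)}(\overline{Y}_k)$. The non-degeneracy of $X$ enters here through Lemma \ref{lemma::lowerdegreebound}: via the surjective map $\psi_2\colon\overline{Y}_k\to \overline{X}_k$, one has $\deg_{\mathcal{O}(1,1)}(\overline{Y}_k)\geq \pr_2^\ast\mathcal{O}(1)^{d}\cdot[\overline{Y}_k]\geq \deg_{\mathcal{O}(1)}(\overline{X}_k)\gg_{X,\iota} n^{2kd}$, which cancels precisely one factor $n^{2kd}$ and leaves the target error $|\lambda|^{2}n^{2k}$. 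The principal obstacle is the bookkeeping in the previous paragraph: one must confirm that the compact support of $f$ inside $(X_{\IC_\nu}^{\mathrm{sm}}\setminus X^{\mathrm{deg}}_{\IC_\nu})^{\mathrm{an}}$ is preserved by $\rho_k$ (so Lemma \ref{lemma::fs_bound} really applies), that every mixed term in $(\alpha_0+n^{2k}\alpha_k)^{d+1-i}$ is subordinate to the $j=d+1-i$ term, and that the lower bound on $\deg_{\mathcal{O}(1,1)}(\overline{Y}_k)$ absorbs exactly one factor of $n^{2kd}$.
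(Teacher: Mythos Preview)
Your proposal is correct and follows essentially the same route as the paper: expand $(\overline{\mathcal{O}}(1,1;\lambda))^{d+1}$ by multilinearity, identify the $i=0,1$ terms, then for $i\geq 2$ use \eqref{equation::intersection_integral}, pull back to $X^{\mathrm{an}}_{\IC_\nu}$ via the birational $\rho_k$, and bound the resulting integral by Lemma~\ref{lemma::fs_bound}. The only cosmetic differences are that you make the binomial expansion of $(\alpha_0+n^{2k}\alpha_k)^{d+1-i}$ explicit (the paper just invokes Lemma~\ref{lemma:rathertrivialestimate} in one breath) and you derive the lower bound $\deg_{\mathcal{O}(1,1)}(\overline{Y}_k)\gg n^{2kd}$ inline from Lemma~\ref{lemma::lowerdegreebound} via $\psi_2$, whereas the paper forward-references this as Lemma~\ref{lemma::degree_estimate} (whose proof is exactly your argument).
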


\begin{proof}
	We note first that
	\begin{equation*}
	h_{\overline{\mathcal{O}}(1,1;k,\lambda)}({\overline{Y}_k}) = \frac{\overline{\mathcal{O}}(1,1;k,\lambda)^{d+1}}{[K:\IQ](d+1)\deg_{\mathcal{O}(1,1)}({\overline{Y}}_k)}.
	\end{equation*}
	by definition and that Lemma \ref{lemma::intersectionnumber} (a,b) provides an expansion
	\begin{equation*}
	\overline{\mathcal{O}}(1,1;k,\lambda)^{d+1} = \sum_{j=0}^{d+1} \binom{d+1}{j} \cdot (\overline{\mathcal{O}}(1,1)|_{ \overline{\mathcal{Y}}_k})^{d+1-j}\cdot
	\overline{\mathcal{O}}(\lambda n^{2k} f_k)^j.
	\end{equation*}
	Again by the definition of the height, we have
	\begin{equation*}
	\frac{(\overline{\mathcal{O}}(1,1)|_{ \overline{\mathcal{Y} }_k})^{d+1}}{[K:\IQ](d+1)\deg_{\mathcal{O}(1,1)}({\overline{Y}}_k)} = h_{\overline{\mathcal{O}}(1,1)}({\overline{Y}_k}),
	\end{equation*}
	and from \eqref{equation::intersection_integral} we infer that
	\begin{equation*}
	\frac{(\overline{\mathcal{O}}(1,1)|_{ \overline{\mathcal{Y} }_k})^{d} \cdot \overline{\mathcal{O}}(\lambda n^{2k} f_k)}{[K:\IQ]\deg_{\mathcal{O}(1,1)}({\overline{Y}}_k)}
	= \frac{\delta_\nu \lambda n^{2k(d+1)}}{[K:\IQ]\deg_{\mathcal{O}(1,1)}({\overline{Y}}_k)} \int_{\overline{Y}_{k,\IC_{\nu}}^{\mathrm{an}}} f_k \mu_{k}.
	\end{equation*}
	Since $\deg_{\mathcal{O}(1,1)}(\overline{Y}_k) \gg n^{2kd}$ by Lemma \ref{lemma::degree_estimate} below, the assertion of the lemma boils down to the estimate
	\begin{equation*} \label{equation::upperbound1}
	\left\vert \sum_{j=2}^{d+1} \binom{d+1}{j} (\overline{\mathcal{O}}(1,1)|_{ \overline{\mathcal{Y}}_k})^{d+1-j} \cdot \overline{\mathcal{O}}(\lambda n^{2k} f_k)^j \right\vert \ll_{f} |\lambda|^2n^{2k(d+1)}.
	\end{equation*}
	Using \eqref{equation::intersection_integral} a second time, we obtain
	\begin{multline*}
	(\overline{\mathcal{O}}(1,1)|_{ \overline{\mathcal{Y}}_k})^{d+1-j} \cdot \overline{\mathcal{O}}(\lambda n^{2k} f_k)^j \\
	= 
	\delta_\nu \lambda^{j}n^{2kj}\int_{\overline{Y}^{\mathrm{an}}_{k,\IC_\nu}} f_k (dd^c f_k)^{\wedge (j-1)} \wedge (\pr_1^\ast \omega_{\mathrm{FS}} + \pr_2^\ast \omega_{\mathrm{FS}})^{\wedge (d+1-j)}
	\end{multline*}
	for each $j \in \{2, \dots, d+1\}$. By the substitution rule \eqref{equation::substitution_rule}, the integral above equals
	\begin{equation*}
	\int_{X^{\mathrm{an}}_{\IC_\nu}} f (dd^c f)^{\wedge (j-1)} \wedge (\iota^\ast \omega_{\mathrm{FS}} + (\iota \circ [n^k])^\ast \omega_{\mathrm{FS}})^{\wedge (d+1-j)},
	\end{equation*}
	which is $\ll_{f} n^{2k(d+1-j)}$ by Lemma \ref{lemma:rathertrivialestimate}. As $\lambda \in [0,1]$ by assumption, the lemma follows immediately.
\end{proof}

The next lemma gives us also a good control on the degree of $\overline{Y}_k$.

\begin{lemma}
	\label{lemma::degree_estimate}
	For each integer $k \geq 0$,
\begin{equation*}
n^{2kd} \ll \deg_{\caO(1,1)}(\overline{Y}_k) \leq \deg_{\caO(n^{2k},1)}(\overline{Y}_k) \ll n^{2kd}.
\end{equation*}
\end{lemma}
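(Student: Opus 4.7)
The upper bound is already established: it is exactly the content of Lemma \ref{lemma::upperdegreebound}, proved via the multiprojective Bézout estimate. So the only new work is the lower bound $\deg_{\caO(1,1)}(\overline Y_k) \gg_{X,\iota} n^{2kd}$, and I plan to reduce it to Lemma \ref{lemma::lowerdegreebound} via the second projection.

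The plan is as follows. Expand the intersection number using multilinearity:
\begin{equation*}
\deg_{\caO(1,1)}(\overline Y_k) \;=\; \bigl((\pr_1^\ast \caO(1) + \pr_2^\ast \caO(1))|_{\overline Y_k}\bigr)^d \;=\; \sum_{j=0}^{d} \binom{d}{j}\bigl(\pr_1^\ast\caO(1)|_{\overline Y_k}\bigr)^{d-j}\cdot\bigl(\pr_2^\ast\caO(1)|_{\overline Y_k}\bigr)^{j}.
\end{equation*}
Since both pulled-back line bundles are nef and $\overline Y_k$ is irreducible of dimension $d$, every summand is non-negative, so one may drop all of them but the term $j=d$:
\begin{equation*}
\deg_{\caO(1,1)}(\overline Y_k) \;\geq\; \bigl(\pr_2^\ast\caO(1)|_{\overline Y_k}\bigr)^{d}.
\end{equation*}

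Next I would apply the projection formula to the map $\psi_2:\overline Y_k \to \overline X_k$. Because $\psi_2$ is surjective and both source and target have the same dimension $d$, it is generically finite, with $\deg(\psi_2)\geq 1$. Hence
\begin{equation*}
\bigl(\pr_2^\ast\caO(1)|_{\overline Y_k}\bigr)^{d} \;=\; \deg(\psi_2)\cdot \deg_{\caO(1)}(\overline X_k)\;\geq\; \deg_{\caO(1)}(\overline X_k),
\end{equation*}
and Lemma \ref{lemma::lowerdegreebound} then provides $\deg_{\caO(1)}(\overline X_k)\gg_{X,\iota} n^{2kd}$, which finishes the lower bound.

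The only subtle point I foresee is justifying that $\psi_2$ is in fact generically finite onto $\overline X_k$, i.e.\ that $\dim\overline Y_k = \dim\overline X_k = d$. This holds because $\overline Y_k$ is the Zariski closure of the graph of the morphism $[n^k]|_X : X \to [n^k](X)$ (composed with $\iota\times\iota$), so projection to the second factor is surjective onto $\overline X_k$ and $\dim\overline Y_k = \dim X = d$; hence the fibers of $\psi_2$ are zero-dimensional on a dense open, as required. With that observation, everything else is formal multilinearity plus an appeal to Lemmas \ref{lemma::lowerdegreebound} and \ref{lemma::upperdegreebound}, and no further estimates are needed.
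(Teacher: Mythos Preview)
Your proposal is correct and follows essentially the same route as the paper: the upper bound is Lemma~\ref{lemma::upperdegreebound}, and the lower bound comes from the projection formula for $\psi_2:\overline{Y}_k\to\overline{X}_k$ together with Lemma~\ref{lemma::lowerdegreebound}. The paper states the inequality $\deg_{\caO(1,1)}(\overline{Y}_k)\geq\deg_{\caO(1)}(\overline{X}_k)$ in one line by citing Fulton's projection formula, whereas you spell out the binomial expansion and the nefness argument that justify dropping the mixed terms; this extra detail is harmless and the underlying argument is identical.
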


\begin{proof}
This is just a combination of Lemmas \ref{lemma::lowerdegreebound} and \ref{lemma::upperdegreebound}.
\end{proof}

To formulate the next lemma, we introduce the supremum
\begin{equation*}
	\mathcal{l} := \sup_{k \geq 0} \{ \mathcal{l}_k \}.
\end{equation*}
Note that $\mathcal{l} \in [0,\infty)$ by Lemma \ref{lemma::smallsequence}.

\begin{lemma}
	\label{lemma::volume_yuan} 
For every integer $k \geq 0$, we have
\begin{equation*}
\widehat{\vol}_\chi(\overline{\mathcal{O}}(1,1;k,\lambda)) - \overline{\mathcal{O}}(1,1;k,\lambda)^{d+1} \gg_{f,\mathcal{l}} - |\lambda|^2 n^{2(d+1)k}.
\end{equation*}
\end{lemma}

The proof of this lemma uses Ikoma's version of Yuan’s bigness theorem \cite{Yuan2008}.

\begin{proof}
Let $\sigma = \sigma(f)$ be a constant such that $f(x) + \sigma \geq 0$ for all $x \in X_{\IC_\nu}^{\mathrm{an}}$. We set
\begin{equation*}
\overline{\caO}(1,1;k,\lambda,\sigma) := \overline{\mathcal{O}}(1,1;k,\lambda) + \overline{\mathcal{O}}(\lambda n^{2k} \sigma) = \overline{\mathcal{O}}(1,1)|_{\overline{\mathcal{Y}}_k} + \overline{\mathcal{O}}(\lambda n^{2k} (f_k + \sigma))
\end{equation*}
and note that
\begin{equation*}
\widehat{\vol}_\chi(\overline{\mathcal{O}}(1,1;k,\lambda)) - \overline{\mathcal{O}}(1,1;k,\lambda)^{d+1} = \widehat{\vol}_\chi(\overline{\caO}(1,1;k,\lambda,\sigma)) - \overline{\caO}(1,1;k,\lambda,\sigma)^{d+1}
\end{equation*}
(e.g.\ by \cite[Lemmas 2.2 (d)\footnote{One of the referees has kindly pointed out to us that this lemma cites a result from \cite{Demailly1993}, for which the numbering in the electronic version on the author's webpage is different from the printed version. It is meant to cite Lemma 9.3 in the electronic version, which corresponds to Lemma 10.2 in the printed version. \newline
We also remark that the lemma is stated for semipositive line bundles, but extends because of multilinearity.} and 2.5 (a)]{Kuehne2022} and \eqref{equation::intersection_integral}). It hence suffices to bound the difference on the right-hand side from below.

Since $\omega_{\mathrm{FS}}$ is a strictly positive $(1,1)$-form and $f_!$ has compact support, there exists some rational constant $q = q(f)> 0$ such that the $(1,1)$-form 
\begin{equation*}
q \cdot \omega_{\mathrm{FS}} + dd^c f_!
\end{equation*}
on $\overline{X}_{0,\IC_\nu}^{\mathrm{an}}$ is strictly positive. Consequently, the $(1,1)$-form 
\begin{equation*}
q \cdot \mathrm{pr}_1^\ast \omega_{\mathrm{FS}} + dd^c f_k
\end{equation*}
on $\overline{Y}_{k,\IC_\nu}^{\mathrm{an}}$ is semipositive. 

Applying Ikoma’s version \cite[Theorem 3.5.3 and Remark 3.5.4]{Ikoma2013} of Yuan’s bigness theorem to the decomposition
\begin{equation*}
\left(\overline{\mathcal{O}}(1,1)|_{\overline{\mathcal{Y}}_k} + q \lambda n^{2k}  \cdot \overline{\mathcal{O}}(1,0)|_{\overline{\mathcal{Y}}_k} + \overline{\mathcal{O}}(\lambda n^{2k}(f_k + \sigma))\right) - q \lambda n^{2k} \cdot \overline{\mathcal{O}}(1,0)|_{\overline{\mathcal{Y}}_k},
\end{equation*}
of $\overline{\caO}(1,1;k,\lambda,\sigma)$, we obtain that $\widehat{\vol}_\chi(\overline{\caO}(1,1;k,\lambda,\sigma))$ is bounded from below by
\begin{multline*}
\left(\overline{\mathcal{O}}(1,1)|_{\overline{\mathcal{Y}}_k} + q \lambda n^{2k}  \cdot \overline{\mathcal{O}}(1,0)|_{\overline{\mathcal{Y}}_k} + \overline{\mathcal{O}}(\lambda n^{2k}(f_k+\sigma))\right)^{d+1} \\
- (d+1) \left(\overline{\mathcal{O}}(1,1)|_{\overline{\mathcal{Y}}_k} + q \lambda n^{2k}  \cdot \overline{\mathcal{O}}(1,0)|_{\overline{\mathcal{Y}}_k} + \overline{\mathcal{O}}(\lambda n^{2k}(f_k+\sigma))\right)^{d} \cdot \left(q \lambda n^{2k} \cdot \overline{\mathcal{O}}(1,0 )|_{\overline{\mathcal{Y}}_k}\right).
\end{multline*}
By Lemma \ref{lemma::intersectionnumber} (a), subtracting $\overline{\caO}(1,1;k,\lambda,\sigma)^{d+1}$ from the above expression results in
\begin{multline}
\label{equation::difference}
- \sum_{i=2}^{d+1} \binom{d+1}{i} \left(\overline{\mathcal{O}}(1,1)|_{\overline{\mathcal{Y}}_k} + q \lambda n^{2k}  \cdot \overline{\mathcal{O}}(1,0)|_{\overline{\mathcal{Y}}_k} + \overline{\mathcal{O}}(\lambda n^{2k}(f_k+\sigma))\right)^{d+1-i}\\ \cdot \left( (-q \lambda n^{2k}) \cdot \overline{\mathcal{O}}(1,0)|_{\overline{\mathcal{Y}}_k}\right)^i.
\end{multline}
We claim that the absolute value of \eqref{equation::difference} is $\ll_{f,\ell} |\lambda|^2 n^{2(d+1)k}$. For this purpose, we expand the intersection number
\begin{equation*}
\left(\overline{\mathcal{O}}(1,1)|_{\overline{\mathcal{Y}}_k} + q \lambda n^{2k} \cdot \overline{\mathcal{O}}(1,0)|_{\overline{\mathcal{Y}}_k} + \overline{\mathcal{O}}(\lambda n^{2k} (f_k +\sigma))\right)^{d+1-i} \cdot \left(q \lambda n^{2k} \cdot\overline{\mathcal{O}}(1,0)|_{\overline{\mathcal{Y}}_k}\right)^i
\end{equation*}
and estimate the intersection number
\begin{equation}
\label{equation::general_term}
(\overline{\mathcal{O}}(1,1)|_{\overline{\mathcal{Y}}_k})^{j_1} \cdot (q \lambda n^{2k} \cdot \mathcal{O}(1,0)|_{\overline{\mathcal{Y}}_k})^{j_2} \cdot (\overline{\mathcal{O}}(\lambda n^{2k} (f_k +\sigma)))^{j_3}, \ j_1 + j_2 + j_3 = d+1,
\end{equation}
in the general term of this expansion. Note that only intersection numbers with $j_2 \geq 2$ appear here, so we may assume this freely in the following. If $j_3 \geq 1$, then \eqref{equation::general_term} equals
\begin{equation*}
q^{j_2}(\lambda n^{2k})^{j_2+j_3}\int_{\overline{Y}_{k,\IC_\nu}^{\mathrm{an}}} (f_k + \sigma) (\pr_1^\ast \omega_{\mathrm{FS}}+\pr_2^\ast \omega_{\mathrm{FS}})^{\wedge j_1} \wedge (\pr_1^\ast \omega_{\mathrm{FS}})^{\wedge j_2} \wedge (dd^c f_k)^{\wedge(j_3 -1)}
\end{equation*}
by \eqref{equation::intersection_integral}. In the case $j_3 = 1$, Lemma \ref{lemma::degree_estimate} implies immediately that this is $\ll_{f} n^{2kd}$.
For the remaining case $j_3 \geq 2$, we recall the notation $\alpha_k = (\iota \circ [n^k])^\ast \omega_{\mathrm{FS}}/n^{2k}$ from Section \ref{section:equilibrium}. Using the substitution formula, we can rewrite the above term as
\begin{equation*}
q^{j_2}(\lambda n^{2k})^{j_2+j_3} \int_{X_{\IC_\nu}^{\mathrm{an}}} (f_! + \sigma) (\alpha_0 + n^{2k} \alpha_k)^{\wedge j_1} \wedge \alpha_0^{\wedge j_2} \wedge (dd^ c f_!)^{\wedge (j_3-1)},
\end{equation*}
whose absolute value is $\ll_{f} |\lambda|^2 n^{2(d+1)k}$ by Lemma \ref{lemma:rathertrivialestimate} and our assumption $\lambda \in [0,1]$. In summary, the absolute value of the corresponding terms in \eqref{equation::difference} is $\ll_{f} |\lambda|^{2} n^{2(d+1)k}$. 

It remains to find similar estimates for \eqref{equation::general_term} if $j_3 = 0$. This term becomes then
\begin{equation*}
(\overline{\mathcal{O}}(1,1)|_{\overline{\mathcal{Y}}_k})^{j} \cdot (q \lambda n^{2k} \cdot \overline{\mathcal{O}}(1,0)|_{\overline{\mathcal{Y}}_k})^{(d+1)-j}.
\end{equation*}
It is easy to see that the intersection numbers
\begin{equation}
\label{equation::terms}
(\overline{\mathcal{O}}(1,1)|_{\overline{\mathcal{Y}}_k})^{j} \cdot (\overline{\mathcal{O}}( n^{2k},0)|_{\overline{\mathcal{Y}}_k})^{(d+1)-j}, \ 0 \leq j \leq (d-1),
\end{equation}
are non-negative. In fact, this follows from the recursive formula for the Arakelov height as both $\overline{\mathcal{O}}(1,0)$ and $\overline{\mathcal{O}}(1,1)$ are globally generated by sections having norm $\leq 1$ everywhere (compare the proof of \cite[Lemma 5.3 (i)]{Zhang1995}). It is hence sufficient to find an upper bound. With a similar argument and using Lemma  \ref{lemma::intersectionnumber} (a), each of the intersection numbers in \eqref{equation::terms} can be bounded from above by $(\overline{\mathcal{O}}(n^{2k},1)|_{\overline{\mathcal{Y}}_k})^{(d+1)}$.

In order to bound this arithmetic intersection number, let us note that Lemma \ref{lemma::estimate_heights} implies that
\begin{align*}
h_{\overline{\mathcal{O}}(n^{2k},1)}(\overline{Y}_k) 
&= \frac{(\overline{\mathcal{O}}(n^{2k},1)|_{ \overline{\mathcal{Y} }_k})^{d+1}}{[K:\IQ](d+1)\deg_{\mathcal{O}(n^{2k},1)}({\overline{Y}}_k)} \\
&\leq n^{2k} (\mathcal{l}_0 + \mathcal{l}_k).
\end{align*}
Combining this with the upper degree bound from Lemma \ref{lemma::degree_estimate}, we infer that
\begin{equation*}
(\overline{\mathcal{O}}(n^{2k},1)|_{\overline{\mathcal{Y}}_k})^{(d+1)} \ll n^{2(d+1)k} (\mathcal{l}_0 + \mathcal{l}_k) \leq 2 n^{2(d+1)k}\mathcal{l}.
\end{equation*}
Retracing the above estimates,
 we conclude the proof.
\end{proof}

With these preparations, we can already bring the proof of Theorem \ref{theorem:equidistribution} close to its end.

\begin{lemma} 
	\label{lemma:equidistribution_estimate}	
	We have
	\begin{equation*}
	\limsup_{i \rightarrow \infty }\left\vert
	\frac{1}{\# \mathbf{O}_\nu(x_i)}\sum_{y \in \mathbf{O}_\nu(x_i)}{f(y)} - \frac{n^{2kd}}{\deg_{\caO(1,1)}(\overline{Y}_k)}\int_{\overline{Y}_{k,\IC_\nu}^{\mathrm{an}}} f_k \mu_{k}\right\vert \longrightarrow 0
	\end{equation*}
	as $k \rightarrow \infty$.
\end{lemma}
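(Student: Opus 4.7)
My plan is to apply Zhang's successive-minima inequality (Theorem~5.2 of \cite{Zhang1995}) to the perturbed hermitian line bundle $\overline{\mathcal{O}}(1,1;\lambda)$ on $\overline{\mathcal{Y}}_{k}$ and then to balance the resulting errors against $\mathcal{l}_{k}\to 0$ as $k\to\infty$. For $|\lambda|\leq c_{12}$ and $k\geq k_{0}$, Lemma~\ref{lemma::strict_positivity}, combined with the strict positivity of $\pr_{1}^{\ast}\omega_{\mathrm{FS}}+\pr_{2}^{\ast}\omega_{\mathrm{FS}}$ on $\overline{Y}_{k,\IC_{\nu}}^{\mathrm{an}}$, ensures that the metric of $\overline{\mathcal{O}}(1,1;\lambda)|_{\overline{Y}_{k}}$ is strictly positive; in particular, $\overline{\mathcal{O}}(1,1;\lambda)|_{\overline{\mathcal{Y}}_{k}}$ is ample and semipositive, and every closed point of $\overline{Y}_{k}$ has non-negative $\overline{\mathcal{O}}(1,1;\lambda)$-height.

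Since $(x_{i})$ is $X$-generic, the sequence $y_{i}^{(k)}=(\iota(x_{i}),\iota([n^{k}](x_{i})))$ is $\overline{Y}_{k}$-generic and Zhang's theorem yields
\begin{equation*}
\liminf_{i\to\infty}h_{\overline{\mathcal{O}}(1,1;\lambda)}(y_{i}^{(k)})\;\geq\;h_{\overline{\mathcal{O}}(1,1;\lambda)}(\overline{Y}_{k}).
\end{equation*}
Decomposing $\overline{\mathcal{O}}(1,1;\lambda)=\overline{\mathcal{O}}(1,1)+\overline{\mathcal{O}}(\lambda n^{2k}f_{k})$ and using that $\pr_{1}$ induces a bijection $\mathbf{O}_{\nu}(y_{i}^{(k)})\leftrightarrow\mathbf{O}_{\nu}(x_{i})$ along which $f_{k}$ restricts to $f$, the left-hand side splits as
\begin{equation*}
h_{\overline{\mathcal{O}}(1,1;\lambda)}(y_{i}^{(k)})\;=\;h_{\overline{\mathcal{O}}(1,1)}(y_{i}^{(k)})\;+\;\frac{\delta_{\nu}\lambda n^{2k}}{[K:\IQ]}\Sigma_{i}(f),\quad\Sigma_{i}(f):=\tfrac{1}{\#\mathbf{O}(x_{i})}\sum_{x\in\mathbf{O}_{\nu}(x_{i})}f(x).
\end{equation*}
Combining this with Lemma~\ref{lemma::expansion}, the positivity $h_{\overline{\mathcal{O}}(1,1)}(\overline{Y}_{k})\geq 0$ from Lemma~\ref{lemma::estimate_heights}, and the bound $\limsup_{i}h_{\overline{\mathcal{O}}(1,1)}(y_{i}^{(k)})\leq n^{2k}\mathcal{l}_{k}$ (coming from $h_{\iota}(x_{i})\to 0$ via Lemmas~\ref{lemma::gaohabegger2} and~\ref{lemma::gaohabegger} and the definition of $\mathcal{l}_{k}$), the elementary inequality $\liminf(a_{i}+b_{i})\leq\limsup a_{i}+\liminf b_{i}$ and division by $\delta_{\nu}\lambda n^{2k}/[K:\IQ]>0$ turn Zhang's inequality, for $\lambda\in(0,c_{12}]$, into
\begin{equation*}
\liminf_{i}\Sigma_{i}(f)\;\geq\;T_{k}\;-\;C_{1}\frac{\mathcal{l}_{k}}{\lambda}\;-\;C_{2}\lambda,
\end{equation*}
where $T_{k}$ is the target quantity of the lemma and $C_{1},C_{2}>0$ are constants independent of $i$ and $k$; here the degree bound $\deg_{\caO(1,1)}(\overline{Y}_{k})\gg_{X,\iota}n^{2kd}$ from Lemma~\ref{lemma::degree_estimate} has been absorbed into $C_{1},C_{2}$ so as to convert the $O(\lambda^{2}n^{2k})$ error of Lemma~\ref{lemma::expansion} into $O(\lambda)$.

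Choosing $\lambda=\sqrt{\mathcal{l}_{k}}$, which lies in $(0,c_{12}]$ for all sufficiently large $k$ by Lemma~\ref{lemma::smallsequence}, drives both error terms to $O(\sqrt{\mathcal{l}_{k}})=o_{k}(1)$, so $\liminf_{i}\Sigma_{i}(f)\geq T_{k}-o_{k}(1)$. Repeating the argument with $-f$ in place of $f$, equivalently with $\lambda<0$ (still admissible because $\alpha_{k}-|\lambda|\,dd^{c}f$ is strictly positive by Lemma~\ref{lemma::strict_positivity}), produces the matching reverse bound $\limsup_{i}\Sigma_{i}(f)\leq T_{k}+o_{k}(1)$, and together these give the claim. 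The principal obstacle is the joint handling of two competing demands on $\lambda$: Zhang's inequality requires semipositivity, which confines $|\lambda|$ to $[-c_{12},c_{12}]$ through Lemma~\ref{lemma::strict_positivity}, while the quadratic error $O(\lambda^{2}n^{2k})$ from Lemma~\ref{lemma::expansion} must beat the noise $n^{2k}\mathcal{l}_{k}$, forcing $\lambda\gg\sqrt{\mathcal{l}_{k}}$. Both demands can be satisfied simultaneously only because $\mathcal{l}_{k}\to 0$ (Lemma~\ref{lemma::smallsequence}), which itself rests on the Dimitrov--Gao--Habegger uniform height bound (Lemma~\ref{lemma::gaohabegger}).
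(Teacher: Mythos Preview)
Your proposal is correct and follows essentially the same route as the paper's proof: apply Zhang's inequality to $\overline{\mathcal{O}}(1,1;\lambda)$ on $\overline{\mathcal{Y}}_k$, expand both sides via Lemmas~\ref{lemma::expansion} and~\ref{lemma::estimate_heights}, and balance the error $\mathcal{l}_k/\lambda$ against the $O(\lambda)$ term. The only difference is cosmetic: you take $\lambda=\sqrt{\mathcal{l}_k}$, whereas the paper fixes $\lambda$ in terms of a target $\varepsilon$ and then sends $k\to\infty$; both work, though the paper's choice is marginally more robust with respect to the edge case $\mathcal{l}_k=0$ and to the first-factor contribution $h_{\overline{\mathcal{O}}(1)}(\iota(x_i))$ in $h_{\overline{\mathcal{O}}(1,1)}(y_i^{(k)})$ (which you suppress and the paper absorbs into the bound $2n^{2k}\mathcal{l}_k$).
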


\begin{proof}
	Choose some $\lambda \in (0,1]$ as well as some $\varepsilon>0$.

	In the following, we write 
	\begin{equation*}
		\overline{\mathcal{O}}(1,1;k,\lambda) = (\mathcal{O}(1,1)|_{\overline{\mathcal{Y}}_k}, \{ \Vert \cdot \Vert_{\mu} \}_{\mu \in \Sigma_\infty(K)}).
	\end{equation*}	
	By Lemma \ref{lemma::minkowski}, there exists some positive integer $N_0$ and a non-zero section $\mathbf{s} \in H^0(\overline{\mathcal{Y}}_k, \mathcal{O}(1,1)^{\otimes N_0})$ such that
	\begin{equation*}
	\frac{\delta_{\nu} \cdot \log \Vert \mathbf{s}(x) \Vert_{\nu}^{1/N_0}}{[K:\IQ]} \leq  - \frac{\volh_\chi(\overline{\mathcal{O}}(1,1;k,\lambda))}{[K:\IQ](d+1)(\mathcal{O}(1,1)|_{\overline{Y}_k})^d} + \varepsilon
	\end{equation*}
	for every point  $x \in ({\overline{Y}_k}\setminus \left\vert\Div(\mathbf{s})\right\vert)_{\IC_{\nu}}^{\mathrm{an}}$ and $$\log \Vert \mathbf{s}(x) \Vert_{\mu} \leq 0$$ for every place $\mu \in \Sigma_\infty(K) \setminus \{ \nu \}$ and all points $x \in ({\overline{Y}_k}\setminus \left\vert\Div(\mathbf{s})\right\vert)_{\IC_{\mu}}^{\mathrm{an}}$. 
	Using Lemmas \ref{lemma::degree_estimate} and \ref{lemma::volume_yuan}, we infer that there exists some constant $c_{22}=c_{22}(f,\mathcal{l})>0$ such that
	\begin{equation*}
	\frac{\delta_{\nu} \cdot \log \Vert \mathbf{s}(x) \Vert_{\nu}^{1/N_0}}{[K:\IQ]} \leq  - h_{\overline{\mathcal{O}}(1,1;k,\lambda)}(\overline{Y}_k) + c_{22} \cdot \lambda^2n^{2k} + \varepsilon
	\end{equation*}
	for every point $x \in ({\overline{Y}_k}\setminus \left\vert\Div(\mathbf{s})\right\vert)_{\IC_{\nu}}^{\mathrm{an}}$. Using Lemmas \ref{lemma::estimate_heights} and \ref{lemma::expansion}, we deduce that
	\begin{equation*}
	\frac{\delta_{\nu} \cdot \log \Vert \mathbf{s}(x) \Vert_{\nu}^{1/N_0}}{[K:\IQ]} \leq
	- \frac{\delta_{\nu}\lambda n^{2k(d+1)}}{[K:\IQ]\deg_{\caO(1,1)}(\overline{Y}_k)} \int_{\overline{Y}_{k,\IC_{\nu}}^{\mathrm{an}}} f_{k} \mu_{k}
	+ c_{23} \cdot \lambda^2n^{2k} + \varepsilon.
	\end{equation*}
	for some constant $c_{23}=c_{23}(f,\mathcal{l})>0$ and all $x \in ({\overline{Y}_k}\setminus \left\vert\Div(\mathbf{s})\right\vert)_{\IC_{\nu}}^{\mathrm{an}}$. Through (\ref{equation::height_section}), we can use this to obtain the lower global bound
	\begin{equation*}
	h_{\overline{\mathcal{O}}(1,1;k,\lambda)}(x) \geq \frac{\lambda n^{2k(d+1)}}{[K:\IQ]\deg_{\caO(1,1)}(\overline{Y}_k)} \int_{\overline{Y}_{k,\IC_{\nu}}^{\mathrm{an}}} f_{k} \mu_{k} - c_{23} \cdot \lambda^2n^{2k} - \varepsilon
	\end{equation*}
	for all closed points $x \in \overline{Y}_k \setminus \left\vert\Div(\mathbf{s})\right\vert$. (We suppress $
	\delta_\nu$ where it is possible because of $\delta_{\nu} \geq 1$.) Expanding the left-hand side of this equation, we thus obtain
	\begin{multline}
	\label{equation::mainestimate5}
	h_{\overline{\caO}(1,1)}(x) + \frac{1}{[K:\IQ]}\left( \frac{\lambda n^{2k}}{\# \mathbf{O}_{\nu}(x) }\sum_{y \in \mathbf{O}_{\nu}(x)} f_{k}(y) - \frac{\lambda n^{2k(d+1)}}{\deg_{\caO(1,1)}(\overline{Y}_k)} \int_{{\overline{Y}}_{k,\IC_{\nu}}^\mathrm{an}} f_{k} \mu_{k} \right)
	\\
	\geq - c_{23} \cdot \lambda^2n^{2k}  - \varepsilon
	\end{multline}
	for all closed points $x \in {\overline{Y}_k} \setminus \left\vert\Div(\mathbf{s})\right\vert$. Since $(x_i) \in X^{\IN}$ is a generic sequence, there exists some integer $i_0$ such that $y_i^{(k)} = (\iota(x_i),\iota([n^k](x_i))) \notin \left\vert\Div(\mathbf{s})\right\vert$ for all $i \geq i_0$. By definition of $\ell_k$,
	\begin{align*}
		n^{-2k} \cdot \limsup_{i \rightarrow \infty} \left(h_{\overline{\caO}(1,1)}(y_i^{(k)})\right)
		&\leq
		n^{-2k}	\cdot \limsup_{i \rightarrow \infty} \left(h_{\overline{\caO}(1)}(\iota(x_i))\right)
		+
		n^{-2k}	\limsup_{i \rightarrow \infty} \left(h_{\overline{\caO}(1)}(x_i^{(k)})\right) \\
		&\leq n^{-2k} \cdot \ell_0 + \ell_k.
	\end{align*}
	After canceling $[K:\IQ]^{-1}\lambda n^{2k}$ in \eqref{equation::mainestimate5}, we obtain thus
	 \begin{multline*}
	\liminf_{i \rightarrow \infty} \left( \frac{1}{\# \mathbf{O}_\nu(x_i)}\sum_{y \in \mathbf{O}_\nu(x_i)} f(y) - \frac{n^{2kd}}{\deg_{\caO(1,1)}(\overline{Y}_k)} \int_{{\overline{Y}}_{k,\IC_{\nu}}^\mathrm{an}} f_{k} \mu_{k} \right)
	\\
	\geq
	-  c_{24}\cdot (\lambda + \lambda^{-1} n^{-2k}\mathcal{l}_0 + \lambda^{-1}\mathcal{l}_k) - \lambda^{-1} [K:\IQ]\varepsilon
	\end{multline*}	
	for some constant $c_{24} = c_{24}(f,\mathcal{l})>0$. Working with $-f$ instead of $f$ in our above reasoning, we obtain similarly
	 \begin{multline*}
	 \limsup_{i \rightarrow \infty} \left( \frac{1}{\# \mathbf{O}_\nu(x_i)}\sum_{y \in \mathbf{O}_\nu(x_i)} f(y) - \frac{n^{2kd}}{\deg_{\caO(1,1)}(\overline{Y}_k)} \int_{{\overline{Y}}_{k,\IC_{\nu}}^\mathrm{an}} f_{k} \mu_{k} \right)
	 \\
	 \leq
	 c_{24}\cdot (\lambda + \lambda^{-1} n^{-2k}\mathcal{l}_0 + \lambda^{-1}\mathcal{l}_k) + \lambda^{-1}[K:\IQ]\varepsilon
	 \end{multline*}
	In summary, we infer that
	\begin{multline}
	\label{equation:finalinequality}
	\limsup_{i\rightarrow \infty}\left\vert \frac{1}{\# \mathbf{O}_\nu(x_i)}\sum_{y \in \mathbf{O}_\nu(x_i)}{f(y)} - \frac{n^{2kd}}{\deg_{\caO(1,1)}(\overline{Y}_k)} \int_{\overline{Y}_{k,\IC_\nu}^{\mathrm{an}}} f_k \mu_{k} \right\vert \\ \leq c_{24}\cdot (\lambda + \lambda^{-1} n^{-2k}\mathcal{l}_0 + \lambda^{-1}\mathcal{l}_k) + \lambda^{-1}[K:\IQ]\varepsilon.
	\end{multline}
	Given $\varepsilon_0>0$, set 
	\begin{equation*}
		\lambda = \min \left\{\frac{\varepsilon_0}{3c_{24}}, 1 \right\}
	\end{equation*}
	and 
	\begin{equation*}
	\varepsilon = \frac{\varepsilon_0\lambda}{3[K:\IQ]} = \frac{ \min \{ \varepsilon_0^2/3c_{24}, \varepsilon_0 \}}{3[K:\IQ]}.	
	\end{equation*}
	By Lemma \ref{lemma::smallsequence}, there also exists an integer $k_0(\varepsilon_0)$ such that 
	\begin{equation*}
	n^{-2k}\mathcal{l}_0 + \mathcal{l}_k < \varepsilon_0\lambda/3c_{24}= \min \{\varepsilon_0^2/9c_{24}^2, \varepsilon_0/3c_{24} \}
	\end{equation*}	
	for all $k \geq k_0(\varepsilon_0)$. The right-hand side in \eqref{equation:finalinequality} is less than $\varepsilon_0$ if $k \geq k_0(\varepsilon_0)$, whence the assertion of the lemma.
\end{proof}

Let us next establish the asymptotics of the integrals appearing in the last lemma.

\begin{lemma} 
	\label{lemma:convergence_of_integrals}
	As $k \rightarrow \infty$, we have
	\begin{equation*}
	\int_{\overline{Y}^{\mathrm{an}}_{k,\IC_\nu}} f_k \mu_k \longrightarrow \int_{X^{\mathrm{an}}_{\IC_\nu}} f \beta^{\wedge d}
	\end{equation*}
	where $\beta$ is the smooth $(1,1)$-form on $X_{\IC_\nu}^{\mathrm{an}}$ introduced in Lemma \ref{lemma:equilibrium}.
\end{lemma}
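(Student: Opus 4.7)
My plan is to pull the integral on $\overline{Y}_k^{\mathrm{an}}$ back to $X^{\mathrm{an}}$, expand the resulting form, and then isolate the single term that carries the limit while treating all other terms as errors that vanish as $k \to \infty$.

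First I would introduce the map $\tilde{\iota}_k : X \to \overline{Y}_k$, $x \mapsto (\iota(x), \iota([n^k](x)))$. Since $\overline{Y}_k$ is by definition the Zariski closure of $(\iota \times \iota)(\Gamma_k)$, the map $\tilde{\iota}_k$ is an isomorphism of $X$ onto a dense open subset of $\overline{Y}_k$, and it satisfies $\psi_1 \circ \tilde{\iota}_k = \iota$. In particular $f_k \circ \tilde{\iota}_k = f$ on $X^{\mathrm{an}}_{\IC_\nu}$. Moreover, by construction, $\tilde{\iota}_k^\ast \pr_1^\ast \omega_{\mathrm{FS}} = \iota^\ast \omega_{\mathrm{FS}} = \alpha_0$ and $\tilde{\iota}_k^\ast \pr_2^\ast \omega_{\mathrm{FS}} = (\iota \circ [n^k])^\ast \omega_{\mathrm{FS}} = n^{2k} \alpha_k$.

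Applying the substitution rule (the complement of the image of $\tilde{\iota}_k$ has strictly smaller dimension and therefore measure zero) and then the binomial expansion, I obtain
\begin{equation*}
\int_{\overline{Y}^{\mathrm{an}}_{k,\IC_\nu}} f_k \mu_k = \int_{X^{\mathrm{an}}_{\IC_\nu}} f \cdot \frac{(\alpha_0 + n^{2k}\alpha_k)^{\wedge d}}{n^{2kd}} = \sum_{j=0}^{d} \binom{d}{j} n^{-2kj} \int_{X^{\mathrm{an}}_{\IC_\nu}} f \, \alpha_0^{\wedge j} \wedge \alpha_k^{\wedge (d-j)}.
\end{equation*}
The $j=0$ summand is $\int_{X^{\mathrm{an}}_{\IC_\nu}} f \, \alpha_k^{\wedge d}$, which converges to $\int_{X^{\mathrm{an}}_{\IC_\nu}} f \, \beta^{\wedge d}$ by Lemma \ref{lemma:equilibrium}. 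So the remaining task is to bound each $j \geq 1$ term.

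For $j \geq 1$, the differential form $f \, \alpha_0^{\wedge j}$ is smooth and compactly supported on $X^{\mathrm{an}}_{\IC_\nu}$ (since $f$ has compact support and $\alpha_0$ is smooth), so Lemma \ref{lemma:rathertrivialestimate} applied with $\gamma = f \, \alpha_0^{\wedge j}$ and $i=j$ yields
\begin{equation*}
\int_{X^{\mathrm{an}}_{\IC_\nu}} f \, \alpha_0^{\wedge j} \wedge \alpha_k^{\wedge (d-j)} \ll_{X,\iota,f} 1,
\end{equation*}
uniformly in $k$. Multiplying by $n^{-2kj}$, the $j$-th error term is $O(n^{-2kj})$ and hence tends to $0$ as $k \to \infty$. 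Summing the $d$ error terms gives a contribution of order $O(n^{-2k})$, so the total expression converges to $\int_{X^{\mathrm{an}}_{\IC_\nu}} f \, \beta^{\wedge d}$, as required. I do not anticipate a serious obstacle here: the only point demanding a bit of care is the verification that $\tilde{\iota}_k$ is a birational isomorphism onto $\overline{Y}_k$ so that the substitution rule applies, and this is immediate from the definition of $\overline{Y}_k$ as the closure of the graph.
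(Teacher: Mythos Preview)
Your proof is correct and follows essentially the same approach as the paper: pull back via the birational map $X \to \overline{Y}_k$, rewrite the integral as $n^{-2kd}\int_{X^{\mathrm{an}}_{\IC_\nu}} f\,(\iota^\ast\omega_{\mathrm{FS}} + (\iota\circ[n^k])^\ast\omega_{\mathrm{FS}})^{\wedge d}$, and then conclude using Lemmas~\ref{lemma:equilibrium} and~\ref{lemma:rathertrivialestimate}. The paper's version is simply terser (it omits the explicit binomial expansion and cites Lemma~\ref{lemma::degree_estimate} where Lemma~\ref{lemma:rathertrivialestimate} is the one actually needed for the cross-term bounds).
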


\begin{proof}
	By the substitution rule, we have 
	\begin{align*}
	\int_{\overline{Y}^{\mathrm{an}}_{k,\IC_\nu}} f_k \mu_k 
	&= n^{-2kd} \int_{X_{\IC_\nu}^{\mathrm{an}}} f \left(\iota^\ast \omega_{\mathrm{FS}} + (\iota \circ [n^k])^\ast \omega_{\mathrm{FS}}\right)^{\wedge d}.
	\end{align*}
	The assertion follows immediately from Lemmas \ref{lemma:equilibrium} and \ref{lemma:rathertrivialestimate}.
\end{proof}

So far, we have only established (in Lemma \ref{lemma::degree_estimate}) and needed the following degree inequalities:
\begin{equation*}
0<\liminf_{k \rightarrow \infty}\left(\frac{\deg_{\caO(1,1)}(\overline{Y}_k)}{n^{2kd}}\right)\leq\limsup_{k \rightarrow \infty}\left(\frac{\deg_{\caO(1,1)}(\overline{Y}_k)}{n^{2kd}}\right)<\infty.
\end{equation*}
Having almost proven equidistribution in Lemma \ref{lemma:equidistribution_estimate} above, we can show that the middle inequality is in fact an equality.\footnote{This would also follow from \cite[Proposition 13]{Gauthier2019}, but it seems noteworthy that equidistribution can be also used to prove this assertion and so we follow this approach. Furthermore, this choice makes our argument more self-contained and less technical.}

\begin{lemma} 
	\label{lemma::proportionality_constant}
	The limit
	\begin{equation}
	\mathcal{k} = \label{equation::degree_limit}
	\lim_{k \rightarrow \infty} \left(\frac{\deg_{\caO(1,1)}(\overline{Y}_k)}{n^{2kd}} \right)
	\end{equation}
	exists in $(0,\infty)$.
\end{lemma}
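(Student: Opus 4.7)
My plan is to leverage Lemma \ref{lemma:equidistribution_estimate} as a rigidity statement: if the ratio $B_k := (n^{2kd}/\deg_{\caO(1,1)}(\overline{Y}_k)) \int_{\overline{Y}_{k,\IC_\nu}^{\mathrm{an}}} f_k \mu_k$ had more than one subsequential limit, the left--hand side of that estimate could not tend to zero for a suitably chosen test function $f$. Lemma \ref{lemma:convergence_of_integrals} then lets me strip off the ``integral factor'' and extract convergence of the degree ratio itself.

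The first step is to produce a non--negative $f \in \mathscr{C}^\infty_c((X^{\mathrm{sm}}_{\IC_\nu} \setminus X^{\mathrm{deg}}_{\IC_\nu})^{\mathrm{an}})$ with $\int_{X^{\mathrm{an}}_{\IC_\nu}} f\, \beta^{\wedge d} > 0$. By hypothesis $X$ is non--degenerate, so by Lemma \ref{lemma::finallemma} there exists a point $x_0 \in X^{\mathrm{sm}}_{\IC_\nu} \setminus X^{\mathrm{deg}}_{\IC_\nu}$ with $(\beta^{\wedge d})_{x_0} \neq 0$. Since $\beta$ is semipositive (Lemma \ref{lemma:equilibrium}), the form $\beta^{\wedge d}$ is a non--negative measure on $X^{\mathrm{an}}_{\IC_\nu}$, strictly positive in a neighborhood of $x_0$; choosing any non--negative bump function with compact support in such a neighborhood and not vanishing at $x_0$ gives the desired $f$.

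Now set $A_i := (\# \mathbf{O}(x_i))^{-1} \sum_{x \in \mathbf{O}(x_i)} f(x)$. Note that $A_i$ is a bounded sequence (by $\sup |f|$), independent of $k$. Lemma \ref{lemma:equidistribution_estimate} asserts
\begin{equation*}
\limsup_{i \to \infty} |A_i - B_k| \longrightarrow 0 \quad \text{as } k \to \infty.
\end{equation*}
Letting $L^+ := \limsup_i A_i$ and $L^- := \liminf_i A_i$ (both finite), the elementary inequalities $\limsup_i |A_i - B_k| \geq |L^+ - B_k|$ and $\limsup_i |A_i - B_k| \geq |L^- - B_k|$ force both $|L^+ - B_k| \to 0$ and $|L^- - B_k| \to 0$ as $k \to \infty$. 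Hence $L^+ = L^-$ and $\lim_{k \to \infty} B_k$ exists; call it $\ell$.

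Finally, Lemma \ref{lemma:convergence_of_integrals} gives $\int_{\overline{Y}_{k,\IC_\nu}^{\mathrm{an}}} f_k \mu_k \to \int_{X^{\mathrm{an}}_{\IC_\nu}} f\, \beta^{\wedge d} > 0$. Combining with the existence of $\lim_k B_k = \ell$, the ratio
\begin{equation*}
\frac{n^{2kd}}{\deg_{\caO(1,1)}(\overline{Y}_k)} = \frac{B_k}{\int_{\overline{Y}_{k,\IC_\nu}^{\mathrm{an}}} f_k \mu_k}
\end{equation*}
converges to a well--defined number $\mathcal{k}^{-1} \in [0,\infty]$. Lemma \ref{lemma::degree_estimate} pins $\mathcal{k}^{-1}$ inside $(0,\infty)$. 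The only substantive step is the choice of $f$ with $\int f\,\beta^{\wedge d} > 0$, which is exactly the point where non--degeneracy of $X$ enters and without which the argument collapses; everything else is a clean two--subsequence bookkeeping built on top of the equidistribution estimate already established.
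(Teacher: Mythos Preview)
Your proof is correct and follows essentially the same route as the paper: choose $f$ with $\int f\,\beta^{\wedge d}>0$, use Lemma~\ref{lemma:equidistribution_estimate} to force convergence of $B_k$ (the paper phrases this as ``$B_k$ is Cauchy'' via the triangle inequality through $A_i$, you phrase it via $L^+=L^-$, but the content is identical), then strip off the integral factor with Lemma~\ref{lemma:convergence_of_integrals} and invoke Lemma~\ref{lemma::degree_estimate} for the finiteness/positivity. One trivial slip: the limit of $n^{2kd}/\deg_{\caO(1,1)}(\overline{Y}_k)$ is $\mathcal{k}$, not $\mathcal{k}^{-1}$, in the paper's notation.
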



\begin{proof}
	As $X$ is non-degenerate, Lemma \ref{lemma::finallemma} guarantees that there exists $f\in \mathscr{C}^\infty_c(X_{\IC_\nu}^{\mathrm{an}})$ such that $\int_{X_{\IC_\nu}^{\mathrm{an}}} f \beta^{\wedge d} > 0$. Applying Lemma \ref{lemma:equidistribution_estimate} with this test function, we obtain an integer $k_0(\varepsilon)$ such that
	\begin{equation*}
	\left\vert
	\frac{n^{2k^\prime d}}{\deg_{\caO(1,1)}(\overline{Y}_{k^\prime})} \int_{\overline{Y}_{k^\prime,\IC_\nu}^{\mathrm{an}}} f_{k^\prime} \mu_{k^\prime} 
	- \frac{n^{2kd}}{\deg_{\caO(1,1)}(\overline{Y}_k)} \int_{\overline{Y}_{k,\IC_\nu}^{\mathrm{an}}} f_k \mu_{k}\right\vert \leq \varepsilon
	\end{equation*}
	for all $k, k^\prime \geq k_0(\varepsilon)$. As both integrals converge to the same (strictly) positive limit by Lemma \ref{lemma:convergence_of_integrals}, we infer that
	\begin{equation*}
	\left(\frac{n^{2kd}}{\deg_{\caO(1,1)}(\overline{Y}_k)}\right)_k
	\end{equation*}
	is a Cauchy sequence and hence converges to some real number in $[0,\infty)$. This shows that the limit \eqref{equation::degree_limit} exists at least in $(0,\infty]$. However, the convergence to $\infty$ is precluded by Lemma \ref{lemma::degree_estimate}.
\end{proof}

There is not much left to complete the proof of Theorem \ref{theorem:equidistribution}: Setting $\mu_\nu = \mathcal{k}^{-1}\cdot \beta^{\wedge d}$, we have
\begin{equation*}
\frac{n^{2kd}}{\deg_{\caO(1,1)}(\overline{Y}_k)} \int_{\overline{Y}^{\mathrm{an}}_{k,\IC_\nu}} f_k \mu_k \longrightarrow
\int_{X^{\mathrm{an}}_{\IC_\nu}} f \mu_\nu
\end{equation*}
as $k \rightarrow \infty$. The theorem is thus an immediate consequence of Lemma \ref{lemma:equidistribution_estimate}.

\section{Proof of Theorems \ref{theorem::uniform_mm} and \ref{theorem::bogo}}
\label{section::uniformity}

Except for Proposition \ref{proposition:bogo2} and its proof, $\pi: A \rightarrow S$ denotes a principally polarized abelian scheme of relative dimension $g$ over a quasi-projective $\IQbar$-variety $S$. We also fix an immersion $\kappa: S \hookrightarrow \IP^N_{\IQbar}$. (In contrast to the other parts of this article, we do not assume a priori that $S$ is smooth in this section, as we can easily reduce to this case in the proofs below.) \textit{In this section, all of the constants can depend on the data introduced so far without further mention. We exclusively keep track of additional dependencies.}

We let $\mathcal{c}: S \rightarrow \mathcal{A}_{g,1}$ be the classifying map such that
\begin{equation}
\label{equation::classifying_map}
\begin{tikzcd}
A \ar[r, "\mathcal{c}_\mathcal{B}"] \ar[d, "\pi"] & \mathcal{B}_{g,1} \ar[d, "\pi_{g,1}"] \\
S \ar[r, "\mathcal{c}"] & \mathcal{A}_{g,1}
\end{tikzcd}
\end{equation}
is cartesian. Furthermore, we consider the base change $\mathcal{c}_\mathcal{B}^\prime$ such that the square
\begin{equation*}
\begin{tikzcd}
	A \times_{\mathcal{B}_{g,1}} \mathcal{B}_{g,3} \ar[r, "\mathcal{c}_\mathcal{B}^\prime"] \ar[d] & \mathcal{B}_{g,3} \ar[d] \\
	A \ar[r, "\mathcal{c}_\mathcal{B}"] & \mathcal{B}_{g,1}
\end{tikzcd}
\end{equation*}
is cartesian; here, $\mathcal{B}_{g,3} \rightarrow \mathcal{B}_{g,1}$ is the forgetful functor. Recall that $\mathcal{B}_{g,3}$ is a quasi-projective variety over $\IQbar$ by \cite[Theorem 7.9]{Mumford1994} and the ``lemma of Serre'' \cite{Serre1962}. It is also smooth by a result of Grothendieck \cite[Theorem 2.4.1]{Oort1971}. 

With the next proposition, we prove a more general result than Theorem \ref{theorem::bogo}, which is additionally amenable to a proof by induction on $\dim(S)$.
\begin{proposition}
	\label{proposition:bogo}
	Let $S$ be an irreducible, quasi-projective variety over $\IQbar$, $\pi: A \rightarrow S$ a principally polarized abelian scheme of relative dimension $g$, $\iota: A \hookrightarrow \IP^N_{\IQbar}$ an immersion, and $C \subseteq A$ an irreducible subvariety such that $\pi(C) = S$. 
	Assume that 
	\begin{enumerate}
	\item[(i)] for every geometric point $s$ of $S$, the fiber $C_s \subset A_s$ is a smooth projective curve of genus $\geq 2$;
	\item[(ii)] for every geometric point $s$ of $S$, the variety $C_s - C_s \subseteq A_s$ is not contained in a proper torsion coset of $A_s$;
	\item[(iii)] the map $\mathcal{c}_\mathcal{B}^\prime$ restricts to a generically finite map on $C \times_{\mathcal{B}_{g,1}} \mathcal{B}_{g,3}$.
	\end{enumerate}
	Then, there exist constants $c_{25} = c_{25}(C,\iota)>0$ and $c_{26}=c_{26}(C,\iota)>0$ such that
	\begin{equation}
	\label{equation::uniform_mm}
	\# \{ x \in C_s(\IQbar) \ | \ \hhat(x) < c_{25} \} \leq c_{26} 
	\end{equation}
	for all $s \in S(\IQbar)$.
\end{proposition}
 
Before giving the proof of this proposition, let us indicate how to deduce the uniform Manin-Mumford and the uniform Bogomolov conjecture from it.

\begin{proof}[Proof of Theorem \ref{theorem::uniform_mm} using Theorem \ref{theorem::bogo}]	
	By a specialization argument due to Masser \cite{Masser1989a} (see \cite[Section 3]{Dimitrov2022}), it is enough to prove Theorem \ref{theorem::uniform_mm} for smooth proper genus $g$ curves $C$ defined over $\IQbar$. In this case, it follows immediately from Theorem \ref{theorem::bogo}, which is deduced below.
\end{proof}
 
\begin{proof}[Proof of Theorem \ref{theorem::bogo} using Proposition \ref{proposition:bogo}]	
	
	We first reduce the assertion to the case where $D = [p]$ for a point $p \in \mathcal{C}_{g,n,s}(\IQbar)$. Assume Theorem \ref{theorem::bogo} is already proven in this special case for constants $c_3^\prime(g,n,\iota)$ and $c_4^\prime(g,n,\iota)$. We claim that Theorem \ref{theorem::bogo} then holds in the general case as well with slightly altered constants
	\begin{equation*}
		c_3(g,n,\iota)= \frac{c_3^\prime(g,n,\iota)}{4} \ \ \text{and} \ \ c_4(g,n,\iota)=c_4^\prime(g,n,\iota).
	\end{equation*}
	To prove this, we may assume that there exists some $p \in \mathcal{C}_{g,n,s}(\IQbar)$ such that
	\begin{equation*}
		\hhat([p]-D) \leq c_3(g,n,\iota);
	\end{equation*}
	for otherwise there is nothing to prove. For every point $q \in \mathcal{C}_{g,n,s}(\IQbar)$ in the set \eqref{equation::my_equation}, we have 
	\begin{equation*}
		\hhat([q] - [p]) \leq 2 \hhat([q] - D) + 2 \hhat(D - [p]) \leq 4 \cdot c_3(g,n,\iota) = c_3^\prime(g,n,\iota)
	\end{equation*}
	by the parallelogram law for the Néron-Tate height \cite[Theorem B.5.1 (c)]{Hindry2000}.
	By assumption, there are at most $c_4^\prime(g,n,\iota)$ such points. This completes the reduction. Hence we may and do assume that $D=[p]$ for some $p \in \mathcal{C}_{g,n,s}(\IQbar)$ in the sequel.

	Let $g \geq 2$ and $n \geq 3$ be fixed integers. Consider the Torelli map (with level structure) $\tau_{g,n}: \mathcal{M}_{g,n} \rightarrow \mathcal{A}_{g,n}$ and the universal family $\pi_{g,n}^\prime: \mathcal{C}_{g,n} \rightarrow \mathcal{M}_{g,n}$. We define the pullback family
	\begin{equation}
	\label{equation::pullback_family}
	A = (\tau_{g,n} \circ \pi_{g,n}^\prime)^\ast \mathcal{B}_{g,n} \longrightarrow S = \mathcal{C}_{g,n}.
	\end{equation}
	Consider the injective $\mathcal{C}_{g,n}$-morphism
	\begin{equation*}
	\varphi_{g,n}: \mathcal{C}_{g,n} \times_{\mathcal{M}_{g,n}} \mathcal{C}_{g,n} \longrightarrow A, \ (p, q) \longmapsto [q] - [p] \in A_{p}.
	\end{equation*}
	As the family $\pi_{g,n}: \mathcal{B}_{g,n} \rightarrow \mathcal{A}_{g,n}$ is projective, so is its pullback \eqref{equation::pullback_family} by \cite[Proposition 5.5.5 (iii)]{EGA2}. By \cite[Proposition 5.3.4 (ii) and Théorème 5.5.3 (i)]{EGA2}, $S$ is quasi-projective over $\IQbar$ as $\pi_{g,n}^\prime$ is projective and $\mathcal{M}_{g,n}$ is quasi-projective. For the same reason, the variety $\mathcal{B}_{g,n}$ is quasi-projective over $\IQbar$ and we can choose an immersion $\iota: \mathcal{B}_{g,n} \hookrightarrow \IP^N_{\IQbar}$. 
	
	Let $C$ be an irreducible component of the subvariety $\varphi_{g,n}(\mathcal{C}_{g,n} \times_{\mathcal{M}_{g,n}} \mathcal{C}_{g,n}) \subseteq A$, which evidently satisfies condition (i) of Proposition \ref{proposition:bogo}. As the Jacobian $A_s$ represents $\mathrm{Pic}^0(C_s)$, it is generated by $C_s - C_s$. Hence (ii) is satisfied as well. Condition (iii) follows from the Torelli theorem \cite[Section VI.3]{Arbarello1985} (see also \cite[Lemma 1.11]{Oort1980}) and the fact that, for each curve $Y$ of genus $\geq 2$, the subvariety $Y - Y \subseteq \mathrm{Jac}(Y)$ has dimension $2$. Applying the proposition to each irreducible component $C$ individually, we obtain Theorem \ref{theorem::bogo}.
\end{proof}

In the sequel, we work with a fixed number field $K$ and a fixed archimedean place $\nu \in \Sigma(K)$. Therefore, we write $Z_\IC$ for the base change $Z \times_K \IC_\nu$ and $Z(\IC)$ for the complex analytic space $Z_{\IC_\nu}^{\mathrm{an}}$. In addition, we write $\mu$ and $\mathbf{O}(x)$ instead of $\mu_\nu$ and $\mathbf{O}_\nu(x)$, respectively. Recall the $(1,1)$-form $\beta$ on $A(\IC)$ from Lemma \ref{lemma:equilibrium}. Furthermore, we use the notations $A^{[n]}$, $X^{[n]}$, $\pi^{[n]}$, and $\iota^{[n]}$ as in Section \ref{section:equilibrium}. We write $\eta$ for the generic point of the base variety $S$.

Instead of proving Proposition \ref{proposition:bogo} directly, we first reduce to another proposition that is easier to work with.
 
\begin{proposition}
	\label{proposition:bogo2}
	Let $K$ be a number field and $\nu$ an archimedean place of $K$. Consider a smooth, geometrically irreducible, quasi-projective variety $S$ over $K$, a principally polarized abelian scheme $\pi: A \rightarrow S$ of relative dimension $g$, an immersion $\iota: A \hookrightarrow \IP^N_K$, and a geometrically irreducible subvariety $C \subseteq A$ such that $\pi(C) = S$ and $\dim(C)=\dim(S)+1$. 
	
	Assume that the generic stabilizer $\mathrm{Stab}_{A_\eta}(C_\eta)$ is trivial and that $C^{[m^\prime]} \subseteq A^{[m^\prime]}$ is a non-degenerate geometrically irreducible subvariety for some integer $m^\prime \geq 2$. Then, there exists an algebraic subvariety $Z \subsetneq S$ of codimension $\geq 1$ and constants $c_{27} = c_{27}(C,\iota,m^\prime)$, $c_{28}=c_{28}(C,\iota,m^\prime)>0$ such that
	\begin{equation}
		\label{equation::uniform_mm}
		\# \{ x \in C_s(\IQbar) \ | \ \hhat(x) < c_{27} \} \leq c_{28} 
	\end{equation}
	for all $s \in S \setminus Z (\IQbar)$.
\end{proposition}

Let us start with the reduction.

\begin{proof}[Proof of Proposition \ref{proposition:bogo} using Proposition \ref{proposition:bogo2}]
	
	Using an induction on $\mathcal{s}=\dim(S)$, it clearly suffices to prove \eqref{equation::uniform_mm} for all $x \in U(\IQbar)$ in a dense open subset $U \subseteq S$. In particular, we can assume that $S$ is smooth in the following. Furthermore, we may and do assume that $A$, $S$, $\pi$, $\iota$, and $\kappa$ are all defined over a number field $K$. 
	The case $\dim(S)=0$ reduces to the classical Bogomolov conjecture \cite{Ullmo1998, Zhang1998}, but our proof actually contains this case as well, by specializing to the argument of \cite[Section 4]{Zhang1998} with some unnecessary modifications.
	
	Choose some $m^\prime \geq \dim(S)$. As the generic fiber $C^{[m^\prime]}_\eta$ is irreducible, we may shrink $S$ to ensure that $C^{[m^\prime]}$ is irreducible. (Recall that the irreducible components of $C^{[m^\prime]}$ meeting the generic fiber $\pi^{-1}(\eta)$ are in a one-to-one with the irreducible components of $C^{[m^\prime]}_\eta$. Compare \cite[(0.2.1.8)]{EGA1}.) By \cite[Theorem 1.3 (i)]{Gao2018a}\footnote{In a previous version of this paper, the stronger result \cite[Theorem 1.3 (ii)]{Gao2018a} was used, which missed a condition that is actually violated in the situation here. The reader is referred to the corrigendum \cite{Gao2021b}.}, the fibered self-product $C^{[m^\prime]}$ is non-degenerate.
	
	Writing $\eta$ for the generic point of $S$, the stabilizer $\mathrm{Stab}(C_\eta)$ of $C_\eta$ in $A_\eta$ is a finite torsion subgroup. In fact, each element of the stabilizer induces a non-trivial automorphism of $C_\eta$. 
	By Hurwitz Theorem \cite[Theorem III.3.9]{Miranda1995}, there are only finitely many such automorphisms since $C_\eta$ has genus $\geq 2$. 
	
	The isogeny $A_\eta \twoheadrightarrow A_\eta/\mathrm{Stab}(C_\eta)$ extends from the generic fiber to some dense open set $U \subseteq S$. After shrinking $S$, we may assume that there exists an abelian $S$-scheme $\pi^\prime: A^\prime \rightarrow S$ and an isogeny $q: A \rightarrow A^\prime$. Furthermore, the image $C^\prime= q(C)$ still has the property that $C^\prime_s$ is a projective, irreducible curve for each geometric point $s$ of $S$, although it is not necessarily smooth anymore. By construction, the stabilizer of $C^\prime_\eta$ in $A_\eta^\prime$ is trivial, and the subvariety
	\begin{equation*}
		(C^\prime)^{[m^\prime]} = (q \times \cdots \times q)(C^{[m^\prime]}) \subseteq (A^\prime)^{[m^\prime]}
	\end{equation*}
	is non-degenerate by Lemma \ref{lemma::nondegenerated_isogenies}. 
	
	After shrinking $S$ further, we can also assume that there exists a dual isogeny $q^\prime: A^\prime \rightarrow A$ such that $q \circ q^\prime = [\deg(q_\eta)]_{A^\prime}$ and $q^\prime \circ q = [\deg(q_\eta)]_A$. The pullback $(q^\prime)^\ast \iota^\ast \caO(1)$ along the finite map $q^\prime$ is relatively ample with respect to $\pi^\prime: A^\prime \rightarrow S$ by \cite[Tags 01VJ and 0892 (b)]{stacksProjectAuthors2015}, so there exist positive integers $a,b$ such that the $b$-th power of the line bundle $(q^\prime)^\ast \iota^\ast \caO(1) \otimes (\pi^\prime)^\ast \kappa^\ast \caO(a)$ is very ample by \cite[Tag 0892 (a)]{stacksProjectAuthors2015}. Write $\iota^\prime: A^\prime \hookrightarrow \IP^{N^\prime}_K$ for an associated projective immersion by means of a basis of its global sections. From the functoriality of the fiberwise Néron-Tate height, it is easy to see that, for every point $x \in A^\prime(\IQbar)$,
	\begin{equation}
		\label{equation::height_comparison}
		\hhat_{\iota^\prime}(q(x)) = b \cdot \hhat(q^\prime(q(x))) = b \cdot \deg(q_\eta)^2 \cdot \hhat(x)
	\end{equation}
	where we set $\hhat_{\iota^\prime}(x) = \lim_{k \rightarrow 
	\infty} (h_{\overline{\mathcal{O}}(1)}(\iota^\prime([n^k](x)))/n^{2k})$. 
	The subvariety $C^\prime \subseteq A^\prime$ satisfies the assumptions of Proposition \ref{proposition:bogo2}. Shrinking $S$ further, we may hence assume that 
	\begin{equation*}
		\# \{ x \in C_s^\prime(\IQbar) \ | \ \hhat_{\iota^\prime}(x) < c_{27} \} \leq c_{28}
	\end{equation*}
	for all $s \in S(\IQbar)$. The assertion of Proposition \ref{proposition:bogo} follows from \eqref{equation::height_comparison}.
	\end{proof}

	\begin{proof}[Proof of Proposition \ref{proposition:bogo2}]
	  Our first goal is to prove that, if the assertion of the proposition is violated, there exists a Zariski-dense sequence $(x_i) \in C^\IN$ of closed points such that $\hhat(x_i) \rightarrow 0$. In this case, there exists a Zariski-dense sequence $(s_k) \in S(\IQbar)^\IN$ such that the sets
	\begin{equation*}
		\Sigma_k := \left\{ x \in C_{s_k}(\IQbar) \ | \ \hhat(x) < (k+1)^{-1} \right\}
	\end{equation*}
	satisfy $\# \Sigma_k \rightarrow \infty$ as $k \rightarrow \infty$. Out of this, we can construct a sequence $(x_i) \in C^\IN$ of closed points whose elements are the closed points corresponding to the rational points $\bigcup_{k=0}^{\infty} \Sigma_k \subseteq C(\IQbar)$.
	
	We next show by induction that such a sequence $(x_i)$ is either Zariski-dense in $C^\IN$ or there is nothing left to prove. In fact, suppose that there exists a Zariski-closed subset $Z \subsetneq C$ containing all elements of the sequence $(x_i)$. Then, $Z$ contains also $\Sigma_k$ for any $k \geq 0$. As $(s_k)$ is Zariski-dense, we have $\pi(Z) = S$. By reasons of dimension, the generic fiber $Z_\eta$ is a finite union of closed points in the generic fiber $C_\eta$. Depriving $S$ of a codimension $\geq 1$ subset, we can hence assume that each geometric fiber $Z_s$ consists of closed points whose number is uniformly bounded. But this implies that $\# \Sigma_k$ is uniformly bounded for infinitely many $k \geq 0$, which is a contradiction. The claim follows.
	
	We continue with describing the setting for our equidistribution argument, following Ullmo \cite{Ullmo1998} and Zhang \cite{Zhang1998}. For given integers $m \geq 2$ and $m^\prime \geq 1$, we define the map	
	\begin{align*}
		\Delta_{0}: \ \ &A^{[m m^\prime]}&  &\longrightarrow& & A^{[(m-1)m^\prime]}, \\ &(\underline{x}_1,\underline{x}_2,\dots,\underline{x}_{m}) & &\longmapsto& &(\underline{x}_1-\underline{x}_2, \underline{x}_2-\underline{x}_3, \dots,\underline{x}_{m-1}-\underline{x}_m),
	\end{align*}
	with each $\underline{x}_i$, $1 \leq i \leq m$, indicating an element of $A^{[m^\prime]}$. In addition, we set
	\begin{equation*}
		\Delta = \Delta_{0} \times_S \id_{A^{[m^\prime]}}: \ A^{[m m^\prime]} \times_S A^{[m^\prime]} \longrightarrow A^{[(m-1)m^\prime]} \times_S A^{[m^\prime]}.
	\end{equation*}

	The triviality of $\Stab(C_\eta)$ implies that $\Stab(C_\eta^{[m^\prime]}) = \Stab(C_\eta)^{m^\prime}$ is trivial as well.	Thus, we can apply \cite[Lemma 4.1]{Abbes1997} for $C^{[m^\prime]}_\eta$, obtaining some integer $m_0 > 0$ such that $\Delta_0|_{C^{[mm^\prime]}_\eta}: C^{[mm^\prime]}_\eta \rightarrow \Delta_0(C^{[mm^\prime]})_\eta$ is birational for all $m \geq m_0$. Consequently, there exists a Zariski-open subset $V \subseteq C^{[(m+1)m^\prime]}$ such that $\Delta|_V: V \rightarrow \Delta(V)$ is an isomorphism onto an open subset $\Delta(V)$ of $\Delta(C^{[(m+1)m^\prime]})$.

	For an arbitrary bijection $\varphi = (\varphi_1,\dots,\varphi_{(m+1)m^\prime}): \IN \rightarrow \IN^{(m+1)m^\prime}$, the sequence
	\begin{equation*}
		y_i = (x_{\varphi_1(i)},x_{\varphi_2(i)},\dots,x_{\varphi_{(m+1)m^\prime}(i)}) \in C^{(m+1)m^\prime}
	\end{equation*}
	is again Zariski-dense and satisfies $\hhat_{\iota^{[(m+1)m^\prime]}}(y_i) \rightarrow 0$. Using \cite[Lemma 4.1]{Zhang1998}, we may even assume that $(y_i)$ is Zariski-generic by passing to a subsequence. The image sequence $(\Delta(y_i))$ is then also Zariski-generic in $\Delta(C^{[(m+1)m^\prime]})$. Furthermore, we can assume that $y_i \notin V$ for all $i \in \IN$. 

	For each integer $n \geq 1$, we write $\beta^{[n]}$ for the $(1,1)$-form defined for $\pi^{[n]}: A^{[n]} \rightarrow S$ in Lemma \ref{lemma:equilibrium}. After shrinking $S$, both $C^{[(m+1)m^\prime]}$ and $\Delta(C^{[(m+1)m^\prime]})$ are non-degenerate by Lemmas \ref{lemma::non_degeneracy} and \ref{lemma::non_degeneracy2}. Therefore, Theorem \ref{theorem:equidistribution} (and its proof) applies to the sequences $(y_i)$ and $(\Delta(y_i))$. For all functions $f \in \mathcal{C}_c^0(C^{[(m+1)m^\prime]}(\IC))$ and $g \in \mathcal{C}_c^0(\Delta(C^{[(m+1)m^\prime]})(\IC))$, we get
	\begin{equation}
		\label{equation::limit1}
		\frac{1}{\# \mathbf{O}(y_i)}\sum_{z \in \mathbf{O}(y_i)} f(z)
		\longrightarrow
		\int_{C^{[(m+1)m^\prime]}(\IC)} f \mu_1
	\end{equation}
	where
\begin{equation*}
	\mu_1=\mathcal{k}^{-1}_{C^{[(m+1)m^\prime]}} \cdot (\beta^{[(m+1)m^\prime]})|_{C^{[(m+1)m^\prime]}}^{\wedge((m+1)m^\prime + \mathcal{s})}
\end{equation*}	
and
\begin{equation}
	\label{equation::limit2}
	\frac{1}{\# \mathbf{O}(\Delta(y_i))}\sum_{z \in \mathbf{O}(\Delta(y_i))} g(z)
	\longrightarrow
	\int_{\Delta(C^{[(m+1)m^\prime]})(\IC)} g \mu_2
\end{equation}
where
\begin{equation*}
	\mu_2=\mathcal{k}^{-1}_{\Delta(C^{[(m+1)m^\prime]})} \cdot  (\beta^{[mm^\prime]})|_{\Delta(C^{[(m+1)m^\prime]})}^{\wedge ((m+1)m^\prime + \mathcal{s})}
\end{equation*}
as $i \rightarrow \infty$. We abuse notation and consider $\mu_1$ and $\mu_2$ as volume forms in the sequel.

Every $f \in \mathscr{C}_c^0(V(\IC))$ can be written as $f = g \circ \Delta$ for some $g \in \mathscr{C}_c^0(\Delta(V)(\IC)))$. Applying \eqref{equation::limit1} and \eqref{equation::limit2} to $f$ and $g$ respectively, we obtain
\begin{equation*}
	\int_{C^{[(m+1)m^\prime]}(\IC)} f \mu_1
	= \int_{\Delta(C^{[(m+1)m^\prime]})(\IC)} g \mu_2 
	= \int_{C^{[(m+1)m^\prime]}(\IC)} f \Delta^\ast \mu_2
\end{equation*}
for all $f \in \mathscr{C}_c^0(V(\IC))$. Note that the substitution rule used here applies indeed also for singular analytic spaces (see the remark in our subsection on \textit{Notations and Conventions} above). We infer that $\mu_1$ and $\Delta^\ast \mu_2$ coincide on $V(\IC)$. As they are real-analytic and $V(\IC)$ is dense in $C^{[(m+1)m^\prime]}(\IC)$, this implies $\mu_1 = \Delta^\ast \mu_2$. This contradicts Lemma \ref{lemma::non_proportional}. Hence, the assertion of the proposition has to hold true.
\end{proof}

\textbf{Acknowledgements:} The author thanks Laura DeMarco, Gabriel Dill, Ziyang Gao, Thomas Gauthier, Philipp Habegger, Myrto Mavraki, Fabien Pazuki, Harry Schmidt, Robert Wilms, Xinyi Yuan for advice, comments, discussions, and encouragement. In particular, he thanks Laura DeMarco for informing him about Gauthier and Vigny's result \cite[Proposition 13]{Gauthier2019}, Xinyi Yuan for sharing the preprint \cite{Yuan2021}, and Thomas Gauthier for sharing his preprint \cite{Gauthier2021}. Furthermore, he thanks Thomas Gauthier and Xinyi Yuan for pointing out an error in the author's previous proof of Lemma \ref{lemma:equidistribution_estimate} and the original version of Theorem \ref{theorem:equidistribution}. Finally, he thanks the three referees for their good advice and many comments, which have substantially contributed to both the mathematical and expository quality of this article.

\textbf{Funding:} The author acknowledges financial support of the Swiss National Science Foundation through an Ambizione Grant in the early stage of this project. The author also received funding from the European Union Horizon 2020 research and innovation programme under the Marie Sklodowska-Curie grant agreement No.\ 101027237.


\bibliographystyle{plain}
\bibliography{../../Bibliography/references}

\end{document}